\documentclass[11pt,reqno]{amsart}

\usepackage[top=1.5in, bottom=2in, left=1.2in, right=1.2in]{geometry}

\usepackage{hyperref}
\hypersetup{%
  pdftitle={\@title},
  pdfauthor={\@author},
  unicode=true,
  colorlinks=true,
  linkcolor=blue,
  citecolor=blue,
  filecolor=blue,
  urlcolor=blue
}


\newtheorem{theorem}{Theorem}
\newtheorem{proposition}{Proposition}
\newtheorem{lemma}{Lemma}

\usepackage[utf8]{inputenc}
\usepackage[T1]{fontenc}
\usepackage[english]{babel}

\usepackage{graphicx}

\usepackage[usenames,dvipsnames]{xcolor}

\usepackage[round]{natbib}

\usepackage{amsmath,amsfonts,amssymb,amsthm,amscd}

\usepackage{nameref}
\usepackage[nameinlink]{cleveref}
\crefname{equation}{equation}{equations}
\crefname{assumption}{assumption}{assumptions}
\creflabelformat{enumi}{(#2#1#3)}

\usepackage{mathtools}

\providecommand\given{}
\newcommand\SetSymbol[1][]{
  \nonscript\,#1:\nonscript\,\mathopen{}\allowbreak}
\DeclarePairedDelimiterX\Set[1]{\lbrace}{\rbrace}%
{ \renewcommand\given{\SetSymbol[]} #1 }

\allowdisplaybreaks

\usepackage{stmaryrd}
\usepackage{mathrsfs}
\usepackage[inline]{enumitem}
\usepackage{bbm}
\usepackage{multirow}
\usepackage{booktabs}

\DeclareMathOperator{\integers}{\mathbb{N}}
\DeclareMathOperator{\reals}{\mathbb{R}}

\DeclareMathOperator{\complex}{\mathbb{C}}

\DeclareMathOperator{\supp}{supp}

\DeclareMathOperator{\Esp}{\mathbb{E}}

\DeclareMathOperator{\Ga}{Ga}

\DeclareMathOperator{\e}{e}

\DeclareMathOperator{\Ind}{\mathbbm{1}}

\DeclareMathOperator{\iid}{\overset{\mathrm{i.i.d}}{\sim}}

\newcommand{\sieve}{\Theta_n}
\newcommand{\sievec}{\Theta_n^c}

\newcommand{\spx}{\mathcal{X}}

\newcommand{\piq}{\Pi_{*}}

\title{Some aspects of symmetric Gamma process mixtures}
\author{Zacharie Naulet\and {\'E}ric Barat}
\date{\today}

\begin{document}

\begin{abstract}
  In this article, we present some specific aspects of symmetric Gamma process
mixtures for use in regression models. We propose a new Gibbs sampler for
simulating the posterior and we establish adaptive posterior rates of
convergence related to the Gaussian mean regression problem.

\end{abstract}

\maketitle

\section{Introduction}
\label{sec:introduction}

Recently, interest in a Bayesian nonparametric approach to the sparse regression
problem based on mixtures emerged from works of \citet{Abramovich2000},
\citet{DeJongeVanZanten2010} and \citet{Wolpert2011}. The idea is to model the
regression function as
\begin{equation}
  \label{eq:32}
  f(\cdot) = \int_{\spx} K(x;\cdot) Q(dx),\quad Q \sim \Pi_{*},
\end{equation}
where $K : \spx \times \reals^d \rightarrow \reals$ is a jointly measurable
kernel function, and $\Pi_{*}$ a prior distribution on the space of signed
measure over the measurable space $\spx$. Although the model \eqref{eq:32} is
popular in density estimation \citet{EscobarWest1995, MuellerErkanliWest1996,
  Ghosal2007, Shen2013, Canale2013} and for modeling hazard rates in Bayesian
nonparametric survival analysis \citet{LoWeng1989, PeccatiPruenster2008,
  DeBlasiPeccatiPruensterEtAl2009, IshwaranJames2012, LijoiNipoti2014}, it seems
that much less interest has been shown in regression.

Perhaps the little interest for mixture models in regression is due to the lack
of variety in the choice of algorithms available, and in the insufficiency of
theoretical posterior contraction results. To our knowledge, the sole algorithm
existing for posterior simulations is to be found in \citet{Wolpert2011}, when
the mixing measure $Q$ is a \textit{Lévy process}. On the other hand, The only
contraction result available is to be found in \citet{DeJongeVanZanten2010} for
a suitable semiparametric mixing measure.

Indeed, both designing an algorithm or establishing posterior contraction
results heavily depends on the choice of $K$ and $\Pi_{*}$ in \cref{eq:32}; but
above all also on the observation model we consider. This last point makes the
study of mixtures in regression nasty to handle because of the diversity of
observation models possible. In this article, we focus on the situation when $Q$
is a \textit{symmetric Gamma process} to propose both a new algorithm for
posterior simulations and posterior contraction rates results.

In the first part of the paper, we propose a Gibbs sampler to get samples from
the posterior distribution of symmetric Gamma process mixtures. The algorithm is
sufficiently general to be used in all observation models for which the
likelihood function is available. We begin with some preliminary theoretical
result about approximating symmetric Gamma process mixtures, before stating the
general algorithm. Finally, we make an empirical study of the algorithm, with
comparison with the RJMCMC algorithm of \citet{Wolpert2011}.

The second part of the paper is devoted to posterior contraction rates
results. We consider the mean regression model with normal errors of unknown
variance, and two types of mixture priors: location-scale and
location-modulation. The latter has never been studied previously, mainly
because it is irrelevant in density estimation models. However, we show here
that it allows to get better rates of convergence than location-scale mixtures,
and thus might be interesting to consider in regression.

\section{Symmetric Gamma process mixtures}
\label{sec:symm-gamma-rand-1}

Let $(\Omega, \mathcal{E}, \mathbb P)$ be a probability space and
$(\spx,\mathcal{A})$ be a measurable space. We call a mapping
$Q : \Omega\times \mathcal{A}\rightarrow \reals\cup \Set{\pm\infty}$ a
\textit{signed random measure} if $\omega\mapsto Q(\omega,A)$ is a random
variable for each $A\in \mathcal{A}$ and if $A\mapsto Q(\omega,A)$ is a signed
measure for each $\omega\in \Omega$.

Symmetric Gamma random measures are infinitely divisible and independently
scattered random measures (the terminology Lévy base is also used in
\citet{Barndorff-Nielsen2004}, and Lévy random measure in \citet{Wolpert2011}),
that is, random measures with the property that for each disjoint
$A_1,\dots,A_k \in \mathcal{A}$, the random variables $Q(A_1),\dots,Q(A_k)$ are
independent with infinitely divisible distribution. More precisely, given
$\alpha,\eta > 0$ and $F$ a probability measure on $\spx$, a symmetric Gamma
random measure assigns to all measurable set $A \in \mathcal{A}$ random
variables with distribution $\mathrm{SGa}(\alpha F(A), \eta)$ (see
\cref{sec:symm-gamma-distr}). Existence and uniqueness of symmetric Gamma random
measures is stated in \citet{rajput1989spectral}.

In the sequel, we shall always denote by $\piq$ the distribution of a
symmetric Gamma random measure with parameters $\alpha,\eta$ and $F$, and we
refer $\alpha F$ as the base distribution of $Q \sim \piq$, and $\eta$ as the
scale parameter.

\subsection{Location-scale mixtures}
\label{sec:locat-scale-mixt}

Given a measurable \textit{mother} function $g : \reals^d \rightarrow \reals$,
we define the \textit{location-scale} kernel $K_{A}(x) := g(A^{-1}x)$, for all
$x\in \reals^d$ and all $A\in \mathcal{E}$, where $\mathcal{E}$ denote the set
of all $d\times d$ positive definite real matrices. Then we consider symmetric
Gamma location-scale mixtures of the type
\begin{align}
  \label{eq:24}
  f(x;\omega) := \int_{\mathcal{E}\times \reals^d}K_A(x-\mu)\,Q(dA
  d\mu;\omega),\qquad \forall x\in \reals^d,
\end{align}
where $Q : \mathcal{E}\times\reals^d\times \Omega \rightarrow [-\infty,\infty]$
is a symmetric Gamma random measure with base measure $\alpha F$ on
$\mathcal{E}\times \reals^d$, and scale parameter $\eta > 0$. The precise
meaning of the integral in \cref{eq:24} is made clear in
\citet{rajput1989spectral}.

\subsection{Location-modulation mixtures}
\label{sec:locat-modul-mixt}

As in the previous section, given a measurable mother function
$g : \reals^d \rightarrow \reals$, we define the \textit{location-modulation}
kernel $K_{\xi,\phi}(x) := g(x) \cos(\sum_{i=1}^d\xi x_i + \phi)$, for all
$x\in \reals^{d}$, all $\xi \in \reals^d$ and all $\phi \in [0,\pi/2]$. Then we
consider symmetric Gamma location-modulation mixtures of the type
\begin{align}
  \label{eq:26}
  f(x;\omega) := \int_{\reals^d\times
  \reals^d\times[0,\pi/2]}K_{\xi,\phi}(x-\mu)\,Q(d\xi d\mu d\phi;\omega),\qquad
  \forall x\in \reals^d,
\end{align}
where
$Q : \reals^d \times \reals^d \times[0,\pi/2]\times \Omega \rightarrow
[-\infty,\infty]$ is a symmetric Gamma random measure with base measure
$\alpha F$ on $\reals^d \times \reals^d \times [0,\pi/2]$, and scale parameter
$\eta > 0$.

\subsection{Convergence of mixtures}
\label{sec:convergence-mixtures-1}

Given a kernel $K : \spx \times \reals^d \rightarrow \reals$ and a symmetric
Gamma random measure $Q$, it is not clear a priori whether or not the mixture
$y\mapsto \int K(x;y)\, Q(dx)$ converges or not, and in what sense. According to
\citet{rajput1989spectral} (see also \citet{Wolpert2011}),
$y\mapsto \int K(x;y)\, Q(dx)$ converges almost-surely at all $y$ for which
\begin{align*}
  \int_{\reals\times \spx} (1\wedge |u K(x;y)|) |u|^{-1} e^{-|u|\eta}F(dx)
  <+\infty.
\end{align*}
Moreover, from the same references (or also in \citet{KINGMAN1992}), if $M$ is a
complete normed space equipped with norm $\|\cdot\|$, then
$y\mapsto \int K(x;y)\, Q(dx)$ converges almost-surely in $M$ if
\begin{align*}
  \int_{\reals\times \spx} (1\wedge |u|\|K(x;\cdot)\|) |u|^{-1}
  e^{-|u|\eta}F(dx) <+\infty.
\end{align*}
Since by definition $F$ is a probability measure, we have for instance that the
mixtures of \cref{eq:24,eq:26} converges almost surely in $L^{\infty}$ as soon
as $\|K_{A}\|_{\infty} < +\infty$ for $F$-almost every $A\in \mathcal{E}$, or
$\|K_{\xi,\phi}\|_{\infty} < +\infty$ for $F$-almost every
$(\xi,\phi)\in \reals^d\times[0,\pi/2]$.

\section{Simulating the posterior}
\label{sec:algorithm}

In this section we propose a Gibbs sampler for exploration of the posterior
distribution of a mixture of kernels by a symmetric Gamma random measure. The
sampler is based on the series representation of the next theorem, inspired from
a result about Dirichlet processes from \citet{Favaro2012}, adapted to symmetric
Gamma processes. In \cref{thm:2}, we consider $\mathcal{M}(\spx)$ the space of
signed Radon measures on the measurable space $(\spx,\mathcal{A})$. By the
Riesz-Markov representation theorem \citep[Chapter~6]{rudin1974real},
$\mathcal{M}(\spx)$ can be identified as the dual space of $\mathtt C_c(\spx)$,
the space continuous functions with compact support. That said, we endow
$\mathcal{M}(\spx)$ with the topology $\mathcal{T}_v$ of weak-* convergence
(sometimes referred as the topology of \textit{vague} convergence), that is, a
sequence $\Set{\mu_n\in \mathcal{M}(\spx) \given n\in \integers}$ converges to
$\mu \in \mathcal{M}(\spx)$ with respect to the topology $\mathcal{T}_{v}$, if
for all $f\in \mathtt C_c(\spx)$,
\begin{align*}
    \int_{\spx} f(x)\,d\mu_n(x) \rightarrow \int_{\spx} f(x)\,d\mu(x).
\end{align*}
Dealing with prior distributions on $\mathcal{M}(\spx)$, we shall equip
$\mathcal{M}(\spx)$ with a $\sigma$-algebra. Here it is always considered the
Borel $\sigma$-algebra of $\mathcal{M}(\spx)$ generated by $\mathcal{T}_v$.

Before stating the main theorem of this section, we recall that a sequence of
random variables $\Set{X_i \in \spx \given 1\leq i \leq n}$ is a
\textit{P{\'o}lya urn sequence} with base distribution $\alpha F(\cdot)$, where
$F$ is a probability distribution on $(\spx,\mathcal{A})$ and $\alpha > 0$, if
for all measurable set $A\in \mathcal{A}$,
\begin{align*}
  P(X_1 \in A) = F(A),
  \quad P(X_{k+1} \in A\, |\, X_1,\dots,X_k) = F_k(A)/F_k(\spx),\ k=2,\dots,n-1,
\end{align*}
where $F_k := \alpha F + \sum_{i=1}^{k}\delta_{X_i}$. We are now in position to
state the main theorem of this section, which proof is given in
\cref{sec:symm-gamma-distr}.

\begin{theorem}
  \label{thm:2}
  Let $\spx$ be a Polish space with Borel $\sigma$-algebra, $p>0$ be integer,
  $T\sim \mathrm{Ga}(\alpha,\eta)$, independently,
  $J_1,\dots,J_p \iid \mathrm{SGa}(1,1)$, and
  $\Set{X_i \in \spx \given 1\leq i \leq p}$ a P{\'o}lya urn sequence with base
  distribution $\alpha F(\cdot)$, independent of $T$ and of the $J_i$'s. Define
  the random measure, $Q_p := \sqrt{T/p}\sum_{i=1}^p J_i\,\delta_{X_i}$. Then
  $Q_p \overset{d}{\rightarrow} Q$, where $Q$ is a symmetric Gamma random
  measure with base distribution $\alpha F(\cdot)$ and scale parameter
  $\sqrt{\eta}$.
\end{theorem}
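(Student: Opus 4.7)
The plan is to establish convergence in distribution on $(\mathcal{M}(\spx),\mathcal{T}_v)$ by verifying pointwise convergence of the characteristic functional $f\mapsto \Esp[e^{i\int f\, dQ_p}]$ at every $f\in\mathtt C_c(\spx)$ and appealing to a continuity theorem for random signed Radon measures. The target value of this functional is, by the independent scattering of $Q$ and the fact that $\mathrm{SGa}(\alpha F(A),\sqrt{\eta})$ has characteristic function $(1+t^2/\eta)^{-\alpha F(A)}$,
\begin{equation*}
\Esp[e^{i\int f\,dQ}]=\exp\!\left(-\alpha\int_{\spx}\log\!\big(1+f(x)^2/\eta\big)\,F(dx)\right).
\end{equation*}

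First, I would condition on $T$ and on $X_{1:p}$. Since $J_i\sim\mathrm{SGa}(1,1)$ has characteristic function $(1+t^2)^{-1}$, independence of the $J_i$ given the rest yields
\begin{equation*}
\Esp\!\left[e^{i\int f\,dQ_p}\,\big|\,T,X_{1:p}\right]=\prod_{i=1}^{p}\frac{1}{1+(T/p)f(X_i)^2}.
\end{equation*}
Next, I would invoke the Blackwell-MacQueen representation of the P\'olya urn: conditionally on $P\sim\mathrm{DP}(\alpha F)$, the $X_i$ are i.i.d.\ draws from $P$. Integrating out $X_{1:p}$ then gives
\begin{equation*}
\Esp\!\left[e^{i\int f\,dQ_p}\right]=\Esp_T\,\Esp_P\!\left[\left(\int_{\spx}\frac{P(dx)}{1+(T/p)f(x)^2}\right)^{\!p}\right].
\end{equation*}

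Then I would pass to the limit $p\to\infty$. Writing the bracket as $\exp\!\big(p\log(1-\int g_p\,dP)\big)$ with $g_p(x):=(T/p)f(x)^2/(1+(T/p)f(x)^2)\le (T/p)\|f\|_\infty^2$, a Taylor expansion gives $p\log(1-\int g_p\,dP)\to -T\int f^2\,dP$ almost surely; since the bracket is bounded by $1$, bounded convergence produces
\begin{equation*}
\lim_{p\to\infty}\Esp[e^{i\int f\,dQ_p}]=\Esp_T\,\Esp_P\!\left[e^{-T\int f^2\,dP}\right].
\end{equation*}
Finally, I would identify this with the target via the Ferguson construction: if $T\sim \mathrm{Ga}(\alpha,\eta)$ is independent of $P\sim\mathrm{DP}(\alpha F)$, then $TP$ is a Gamma random measure with intensity $\alpha F$ and scale $\eta$, whose Laplace functional at $h\ge 0$ equals $\exp\!\big(-\alpha\int\log(1+h/\eta)\,dF\big)$. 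Applied with $h=f^2$ this is exactly the characteristic functional of $Q$ displayed above.

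The main obstacle, beyond the bookkeeping of the Taylor expansion and the dominated-convergence step, is upgrading pointwise convergence of one-dimensional characteristic functionals on $\mathtt C_c(\spx)$ to convergence in distribution on $(\mathcal{M}(\spx),\mathcal{T}_v)$. This requires a Minlos-Bochner / Kallenberg-type continuity theorem for random signed Radon measures together with tightness, which can be obtained here from a uniform bound on $\Esp[Q_p(K)^2]$ for each compact $K\subset\spx$ (easily derived from the independence of $T$, the $J_i$'s, and the P\'olya exchangeability of the $X_i$'s).
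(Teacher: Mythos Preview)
Your argument is correct, but it takes a genuinely different route from the paper's. The paper follows Favaro et~al.\ and uses the \emph{method of moments}: since $\mathrm{SGa}$ is determined by its moments (\cref{pro:1}), by Cram\'er--Wold it suffices to show that the mixed moments $\Esp[Q_p(A_1)^{2r_1}\cdots Q_p(A_k)^{2r_k}]$ converge to those of $Q$. These are computed explicitly from the P\'olya urn cell-count distribution and the moment formula for $\mathrm{SGa}(1,1)$, then simplified via Stirling numbers of the first and second kind; the limit $p\to\infty$ is read off from the asymptotics of the falling factorial $(p)_{s_1+\cdots+s_k}/p^{r_1+\cdots+r_k}$.

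Your approach instead works with the characteristic functional and exploits two structural facts: the Blackwell--MacQueen de~Finetti representation of the P\'olya urn as i.i.d.\ sampling from $P\sim\mathrm{DP}(\alpha F)$, and Ferguson's Gamma--Dirichlet identity $TP\overset{d}{=}G$ with $G$ a Gamma random measure of shape $\alpha F$ and rate $\eta$. This bypasses all the Stirling-number combinatorics and makes the mechanism transparent: the factor $\sqrt{T/p}$ with $T\sim\mathrm{Ga}(\alpha,\eta)$ is precisely what is needed so that in the limit the Dirichlet mass $P$ gets ``gamma-tilted'' into the Laplace functional of $G$ at $f^2$. The price you pay is that you must invoke the Gamma--Dirichlet independence as a black box, whereas the moment computation is self-contained. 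Both approaches face the same final hurdle of upgrading finite-dimensional convergence to weak convergence on $(\mathcal{M}(\spx),\mathcal{T}_v)$; the paper handles this via \cref{lem:3} (after Jacob~1995), which is exactly the continuity result you identify as the main obstacle and which your tightness sketch would feed into.
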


\subsection{Convergence of sequences of  mixtures}
\label{sec:convergence-mixtures}

In \cref{thm:2}, we proved weak convergence of the sequence of approximating
measures $(Q_p)_{p\geq 1}$ to the symmetric Gamma random measure, but it is not
clear that mixtures of kernels by $Q_p$ also converge. The next proposition
establish convergence in $L^q$ for general kernels, with $1\leq q < +\infty$,
the proof is similar to the proof of \citet[Theorem~2]{Favaro2012}, thus we
defer it into \cref{sec:proofs}. For any kernel
$K : \spx \times\reals^d \rightarrow \complex$, and any (signed) measure $Q$ on
$(\spx, \mathcal{A})$, we write
\begin{align*}
  f^{(Q)}(y)
  := \int_{\spx} K(x;y)\,Q(dx).
\end{align*}

\begin{proposition}
  \label{pro:3}
  If $x\mapsto K(x;y)$ is continuous for all $x\in \spx$, vanishes outside a
  compact set, and bounded by a Lebesgue integrable function $h$, then for any
  $1 \leq q < +\infty$ we have
  $\lim_{p\rightarrow \infty}\|f^{(Q_p)} - f^{(Q)}\|_q = 0$ almost-surely.
\end{proposition}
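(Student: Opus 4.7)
My plan is to combine the vague convergence from \cref{thm:2} with a dominated convergence argument applied to the outer Lebesgue integral over $y$. The first step is to pass to a probability space (via Skorokhod's representation theorem, or by using the natural coupling given by the series representation where the first $p$ atoms of the P\'olya urn and the first $p$ random variables $J_i$ are shared as $p$ varies) on which the convergence $Q_p \to Q$ in $\mathcal{T}_v$ holds almost surely. Under the standing hypotheses, $x \mapsto K(x;y)$ belongs to $\mathtt{C}_c(\spx)$ for every fixed $y \in \reals^d$, so the evaluation functional $\mu \mapsto \int K(x;y)\,\mu(dx)$ is continuous with respect to the vague topology. Vague convergence therefore yields the pointwise almost-sure convergence $f^{(Q_p)}(y) \to f^{(Q)}(y)$ for every $y \in \reals^d$.

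The bulk of the work is then to upgrade this pointwise convergence to convergence of the $L^q$-norm. The hypothesis $|K(x;y)| \le h(y)$ with $h$ Lebesgue integrable gives the crude bound $|f^{(Q_p)}(y)| \le h(y)\,|Q_p|(B)$, where $B$ is a compact set containing $\supp K(\cdot;y)$; but because $|Q_p|(B)$ grows like $\sqrt{pT}$, this bound is not usable as a fixed $p$-independent dominator. Instead, I would compute moments of $f^{(Q_p)}(y)$ directly, exploiting the independence of the $J_i \iid \mathrm{SGa}(1,1)$ (which have mean zero and all moments finite) and the exchangeability of the P\'olya urn, to obtain an estimate of the form $\Esp |f^{(Q_p)}(y)|^q \le C\,h(y)^q$ with $C$ independent of $p$. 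Fubini then shows that $\Esp \|f^{(Q_p)}\|_q^q$ is bounded uniformly in $p$, and the same computation applied to $f^{(Q_p)} - f^{(Q)}$, together with the pointwise a.s. convergence, yields $\Esp \|f^{(Q_p)} - f^{(Q)}\|_q^q \to 0$ via a dominated convergence argument on $\int \Esp|f^{(Q_p)}(y) - f^{(Q)}(y)|^q\,dy$.

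To recover almost-sure rather than mean convergence of the $L^q$-norm, I would either extract a fast enough subsequence $p_k$ with $\sum_k \Esp\|f^{(Q_{p_k})} - f^{(Q)}\|_q^q < \infty$ and apply Borel-Cantelli, and then interpolate between consecutive $p_k$'s using the explicit coupling; or, under the natural coupling, show directly that the increments $\|f^{(Q_{p+1})} - f^{(Q_p)}\|_q^q$ form a summable sequence almost surely. The main obstacle is thus the uniform-in-$p$ moment estimate: one must control both the multiplicative fluctuations coming from the scaling $\sqrt{T/p}$ and the weak dependence induced by the P\'olya urn, so that the second-order structure of $Q_p$ really matches (up to constants) that of $Q$. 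Once this bound is in place the remainder of the argument is a standard dominated-convergence/Borel-Cantelli package.
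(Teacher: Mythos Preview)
Your first step---passing to a common probability space and using that $K(\cdot;y)\in \mathtt C_c(\spx)$ to get $f^{(Q_p)}(y)\to f^{(Q)}(y)$ pointwise a.s.---matches the paper exactly. The divergence is in how you pass from pointwise to $L^q$ convergence.

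The paper's route is much shorter: it asserts that the total variation $|Q_p|(\spx;\omega)$ is almost surely bounded in $p$, and then applies dominated convergence in $y$ directly, $\omega$ by $\omega$, with dominator $(\sup_p |Q_p|(\spx;\omega))\,h(y)$. Your claim that ``$|Q_p|(B)$ grows like $\sqrt{pT}$'' is where you go astray: that estimate treats the $p$ atoms as distinct, but the $X_i$ come from a P\'olya urn and therefore cluster. Within a cluster of size $n_k$ the signed weights $J_i$ cancel, so that $\bigl|\sum_{i:X_i=X_k^\star}J_i\bigr|$ is of order $\sqrt{n_k}$ rather than $n_k$; combined with the size profile of Ewens clusters this keeps $\sqrt{T/p}\sum_k|\sum_{i:C_i=k}J_i|$ bounded in expectation, not growing like $\sqrt{p}$. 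This is precisely what the paper leans on (deferring details to \citet{Favaro2012}) to avoid the moment-plus-Borel--Cantelli machinery you propose.

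Your plan is not wrong in outline---the second-moment computation $E|f^{(Q_p)}(y)|^2 = 2E[T]\,E[K(X_1;y)^2]$ does come out uniform in $p$ thanks to the independence and mean-zero property of the $J_i$---but the final upgrade from $L^q$-mean convergence to almost-sure convergence via subsequences and Borel--Cantelli is a genuine extra layer that the paper sidesteps entirely. So the main gap in your proposal is the unjustified growth estimate for $|Q_p|$, which makes you discard the direct argument the paper actually uses.
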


Under supplementary assumptions on $K$, we can say a little-more about uniform
convergence of the approximating sequence of mixtures. Assuming that
$y\mapsto K(x;y)$ is in $L^1$ for all $x\in \spx$, we denote by
$(x,u)\mapsto \widehat{K}(x;u)$ the $L^1$ Fourier transform on the second
argument of $(x,y) \mapsto K(x;y)$.

\begin{proposition}
  \label{pro:4}
  Let $y\mapsto K(x;y)$ be in $L^1$ for all $x\in \spx$ and $\widehat{K}$
  satisfies the assumption of \cref{pro:3}. Then
  $\lim_{p\rightarrow \infty}\|f^{(Q_p)} - f^{(Q)}\|_{\infty} = 0$
  almost-surely.
\end{proposition}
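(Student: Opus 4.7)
\emph{Strategy.} The plan is to transfer the problem to the Fourier side, where \cref{pro:3} applies directly to $\widehat K$, and then to Fourier-invert in order to recover uniform convergence in $y$.

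\emph{Main steps.} Since $Q_p$ is a finite discrete signed measure, linearity of the Fourier transform in $y$ gives immediately $\widehat{f^{(Q_p)}}(u) = \int_{\spx}\widehat K(x;u)\,Q_p(dx)$. By hypothesis $\widehat K$ satisfies the assumptions of \cref{pro:3}, so I would apply that proposition with $\widehat K$ in place of $K$ and $q=1$ to obtain, almost surely,
\begin{equation*}
\Bigl\|\int_{\spx}\widehat K(x;\cdot)\,Q_p(dx) - g\Bigr\|_1 \;\xrightarrow{p\to\infty}\; 0,
\end{equation*}
where $g(u) := \int_{\spx}\widehat K(x;u)\,Q(dx)$; in particular $g\in L^1(\reals^d)$. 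Writing $\check g$ for the inverse Fourier transform of $g$, the standard inequality $\|\check h\|_\infty \le (2\pi)^{-d}\|h\|_1$ applied to $h = \widehat{f^{(Q_p)}} - g$ then yields
\begin{equation*}
\|f^{(Q_p)} - \check g\|_\infty \;\le\; (2\pi)^{-d}\bigl\|\widehat{f^{(Q_p)}} - g\bigr\|_1 \;\xrightarrow{p\to\infty}\; 0 \quad \text{almost surely.}
\end{equation*}

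\emph{Main obstacle.} It remains to identify $\check g$ with $f^{(Q)}$ almost everywhere. Formally, Fubini gives
\begin{equation*}
\check g(y) \;=\; \int_{\spx}\Bigl((2\pi)^{-d}\int e^{iy\cdot u}\widehat K(x;u)\,du\Bigr)Q(dx) \;=\; \int_{\spx}K(x;y)\,Q(dx) \;=\; f^{(Q)}(y),
\end{equation*}
using Fourier inversion on each $K(x;\cdot)\in L^1$. Justifying this interchange is the hard part of the argument, since $Q$ is a signed random measure of almost surely infinite total variation and the classical Fubini theorem does not apply directly. I would handle it via the Poisson point representation of $Q$ (with intensity $\alpha|u|^{-1}e^{-|u|\eta}\,du\otimes F(dx)$), performing a conditional Fubini on compact $x$-truncations and exploiting the $L^1$ control of $\widehat K$ granted by the \cref{pro:3} hypotheses; alternatively, one can pass to the limit in the trivially valid discrete identity $f^{(Q_p)}(y) = (2\pi)^{-d}\int e^{iy\cdot u}\widehat{f^{(Q_p)}}(u)\,du$ and identify the two limits, using the uniform convergence established above on the left-hand side.
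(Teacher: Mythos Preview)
Your proposal is correct and follows essentially the same route as the paper: bound $\|f^{(Q_p)} - f^{(Q)}\|_\infty$ by $\|\widehat{f^{(Q_p)}} - \widehat{f^{(Q)}}\|_1$, identify $\widehat{f^{(Q)}}(u)$ with $\int_\spx \widehat K(x;u)\,Q(dx)$ via Fubini, and conclude by \cref{pro:3} applied to $\widehat K$. The paper dispatches the identification step you flag as the ``main obstacle'' with a one-line appeal to Fubini's theorem and a reference to \cref{sec:convergence-mixtures-1} for well-definedness of $\widehat f^{(Q)}$, so your worry about justifying the interchange for the infinite-variation $Q$ is in fact more scrupulous than the paper itself.
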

\begin{proof}

  We can assume without loss of generality that $f^{(Q_p)}$ and $f^{(Q)}$ are
  defined on the same probability space $(\Omega,\mathcal{F},\mathbb P)$. By
  duality, it is clear that
  $\|f^{(Q_p)}(\cdot;\omega) - f^{(Q)}(\cdot;\omega)\|_{\infty} \leq
  \int_{\reals^d}|\widehat{f}^{(Q_p)}(u,\omega) - \widehat{f}^{(Q)}(u;\omega)|\,
  du$, where $\widehat{f}$ denote the $L^1$ Fourier transform of $f$. Notice
  that by assumptions on $K$, $\widehat{f}^{(Q_p)}$ and $\widehat{f}^{(Q)}$ are
  well-defined for almost all $\omega \in \Omega$ (see
  \cref{sec:convergence-mixtures-1}). Then by Fubini's theorem
  \begin{align*}
    \widehat{f}^{(Q_{p)}}(u;\omega)
    &= \int_{\reals^d}\int_{\spx}K(x;y)\, Q(dx;\omega)\, \e^{-iuy}\,dy\\
    &= \int_{\spx}\int_{\reals^d} K(x;y)\e^{-iuy}\,dy\,Q(dx;\omega)
      = \int_{\spx} \widehat{K}(x;u)\, Q(dx;\omega),
  \end{align*}
  and the conclusion follows from \cref{pro:3}.
\end{proof}

\subsection{General algorithm}
\label{sec:sampling-algorithm}

From \cref{thm:2}, replacing $Q$ by $Q_p$ for sufficiently large $p$, we propose
a P\'olya urn Gibbs sampler adapted from algorithm~8 in \cite{Neal2000}. In the
sequel, we refer to $Q_p$ as the particle approximation of $Q$ with $p$
particles.

Let $Y = (Y_i)_{i=1}^n$ be observations coming from a statistical model with
likelihood function $\mathcal{L}(f | Y)$, where
$f : \reals^d \rightarrow \reals$ is the regression function on which we put a
symmetric Gamma mixture prior distribution. Let $X = (X_i)_{i=1}^p$ be a P\'olya
urn sequence, $J := (J_1,\dots,J_p)$ a sequence of i.i.d. $\mathrm{SGa}(1,1)$
random variables, and $T \sim \Ga(\alpha,\eta)$ independent of $(X_i)_{i=1}^p$
and $J$. We introduce the clustering variables $C := (C_1,\ldots,C_p)$ such that
$C_i=k$ if and only if $X_i=X_k^\star$ where $X^{\star}:=X_1^{\star},\ldots$
stands for unique values of $(X_i)_{i=1}^p$. In the sequel, $C_{-i}$ stands for
the vector obtained from removing the coordinate $i$ to $C$, and the same
definition holds for $J$ \textit{mutatis mutandis}. Given $J,C,X,T$ and a
measurable kernel $K : \spx \times\reals \rightarrow \reals$ we construct $f$ as
\begin{align*}
  f(x) = \sqrt{\frac{T}{p}}\sum_{i=1}^pJ_i\, K(X_i;x).
\end{align*}
We propose the following algorithm. At each iteration, successively sample from
:
\begin{enumerate}
  \item $C_i \vert C_{-i},Y,X,J,T$, for $1\le i \le p$. Let
  $n_{k,i}=\#_{\stackrel{1\le l \le n}{l\ne i}}\lbrace C_l=k\rbrace$,
  $\kappa^{(p)}$ the number of distinct $X_k$ values and $\kappa_0$ a chosen
  natural,
  \begin{align*}
    C_i & \stackrel{\text{ind}}{\sim} \sum_{k=1}^{\kappa^{(p)}}
    n_{k,i}\,\mathcal{L}_{k,i}(X, J,
    T|Y)\,\delta_{k}(\cdot)
    + \frac{\alpha}{\kappa_0} \sum_{k=1}^{\kappa_0}
    \mathcal{L}_{k+\kappa^{(p)},i}(X,J,T|Y)\,\delta_{k+\kappa^{(p)}}(\cdot),
  \end{align*}
  where $\mathcal{L}_{k,i}(X,J,T|Y)$ stands for the likelihood under hypothesis
  that particle $i$ is allocated to component $k$ (note that the likelihood
  evaluation requires the knowledge of whole distribution $F$ under any
  allocation hypothesis).
  \item $X\vert C,Y,J,T$. Random Walk Metropolis Hastings on parameters.
  \item $J_i \vert J_{-i},K,Y,X,T$, for $1\le i \le p$. Independent Metropolis
  Hastings with prior $\mathrm{SGa}(1,1)$ taken as i.i.d. candidate distribution
  for $J_i$. Note that for $n\rightarrow\infty$, the posterior distribution of
  $J_i \vert J_{-i},C,Y,Z$ should be $\mathrm{SGa}(1,1)$, then the number of
  particles $p$ may be monitored using the acceptance ratio of the $J_i$'s.
  \item $T \vert C,Y,X,J$. Random Walk Metropolis Hastings on scale parameter.
\end{enumerate}

\subsection{Assessing the convergence of the Markov Chain}
\label{sec:assess-conv-mark}

The previous algorithm produces a Markov Chain whose invariant distribution is
(an approximation of) the posterior distribution of a symmetric Gamma process
mixture. However, if the Markov Chain is initialized in a region of low
posterior probability mass, we may over-sample this region. To avoid such
over-sampling, we discard the first $n_0$ samples of the chain using Geweke's
convergence diagnostic \citep{Geweke1992}.

More precisely, we monitor the convergence of the chain using the log-likelihood
function. We start the algorithm with Markov Chain initialized at random from
prior distribution. Then after $n\gg n_0$ iterations we compute Geweke's
$Z$-statistic for the log-likelihood using the whole chain; if the statistic is
outside the $95\%$ confidence interval we continue to apply the diagnostic after
discarding $10\%$, $20\%$, $30\%$ and $40\%$ of the chain. If the $Z$-statistic
is still outside $95\%$ confidence interval, the chain is reported as failed to
converge, and we restart the algorithm from a different initialization point.

Once we have discarded the first $n_0$ samples using Geweke's test, we run the
chain sufficiently longer to get an \textit{Effective Sample Size} (ESS) of at
least $1000$ samples, where we measure the ESS through the value of the
log-likelihood at each iteration of the Markov Chain. A thinning of the chain is
not required in general, however, we found in practice that a slight thinning
improves the efficiency of the sampling.

In \cref{fig:assess-mcmc}, we draw some examples of temporal evolution of the
log-likelihood on a simple univariate Gaussian mean regression problem. Here and
after, we always choose step sizes in RWMH steps to achieve approximately 30\%
acceptance rates for each class of updates. Each subfigure represent 10
simulations with random starting point of the Markov Chain, distributed
according to the prior distribution. We draw each subfigure varying the
parameters liable to influence the mixing time of the chain, notably $m$ and the
number of particles. We observe that the speed at which the chain reach
equilibrium is fast, especially when the number of particles is high. This last
remark have to be balanced with the complexity in time of the algorithm which is
$O(mnp)$ for a naive implementation, and, depending on the nature of the
likelihood, can be reduced to $O(mp)$ or $O(mp^2)$.

\begin{figure}[!htb]
  \centering
  \includegraphics[width=0.95\linewidth]{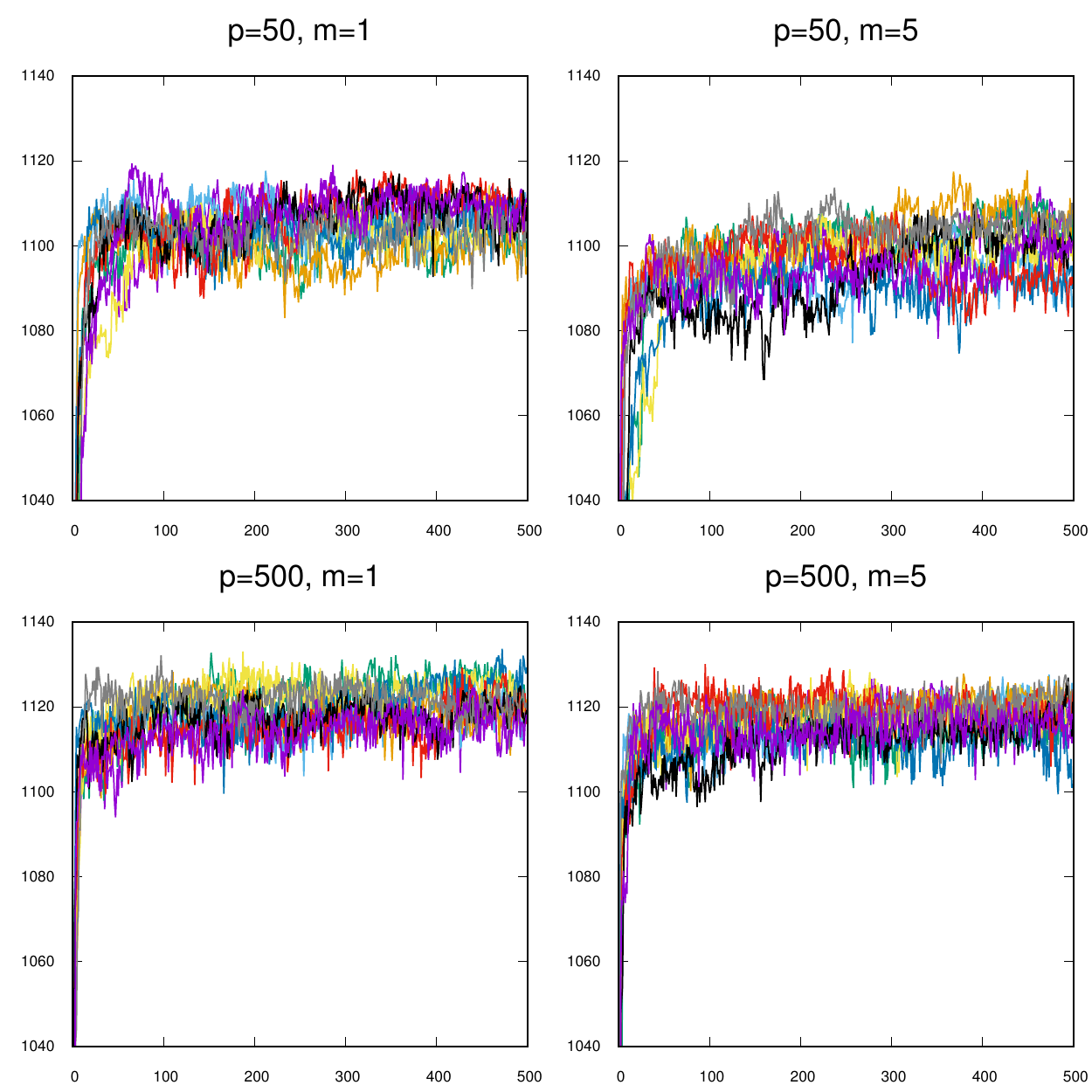}
  \caption{Time evolution of the log-likelihood for different starting point of
    the Markov Chain, chosen according to the prior distribution, and various
    parameters of the algorithm. The figure are taken from the test function
    \textit{blip} of the \cref{sec:simulation-procedure}.}
  \label{fig:assess-mcmc}
\end{figure}

\section{Examples of simulations}
\label{sec:examples-simulations}

We now turn our attention to simulated examples to illustrate the performance of
mixture models. First, we use mixtures as a prior distribution on the regression
function in the univariate mean regression problem with normal errors. Of
course, the interest for mixture comes when the statistical model is more
involved. Hence, in a second time we present simulation results for the
multivariate inverse problem of CT imaging.

\subsection{Mean regression with normal errors}
\label{sec:simulation-procedure}

We present results of our algorithm on several standard test functions from the
wavelet regression litterature \citep[see][]{Marron1998}, following the
methodology from \citet{Antoniadis2001} (\textit{i.e.} Gaussian mean regression
with fixed design and unknown variance). However, it should be noticed that
mixtures are not a Bayesian new implementation of wavelet regression, and are
much more general (see for instance the next section). For each test function,
the noise variance is chosen so that the root signal-to-noise ratio is equal to
3 (a high noise level) and a simulation run was repeated 100 times with all
simulation parameters constant, excepting the noise which was regenerated. We
ran the algorithm for location-scale mixtures of Gaussians and Symmlet8, with
normal $\mathrm N(0.5,0.3)$ distribution as prior distribution on translations,
and a mixture of Gamma distributions for scales ($\mathrm{Ga}(30, 0.06)$ and
$\mathrm{Ga}(2,0.04)$ with expectation $500$ and $50$ respectively). In addition
of the core algorithm of \cref{sec:sampling-algorithm}, we also added
\begin{itemize}
  \item a Gibbs step estimation of the noise variance, with Inverse-Gamma prior
  distributon,
  \item a $\mathrm{Ga}(2,0.5)$ (with expectation $4$) prior on $\alpha$, with
  sampling of $\alpha$ done through a Gibbs update according to the method
  proposed in \citet{West1992},
  \item a Dirichlet prior on the weights of the mixture of $\mathrm{Ga}(20,0.2)$
  and $\mathrm{Ga}(2,0.1)$, with sampling of the mixture weights done through
  Gibbs sampling in a standard way,
  \item a $\mathrm{Ga}(5,10)$ (with expectation $0.5)$ prior on $T$, instead
  of normally $\mathrm{Ga(\alpha,\eta})$, which add more flexibility.
\end{itemize}
The choice of the mixture distribution as prior on scales may appear surprising,
but we found in practice that using bimodal distribution on scales substantially
improve performance of the algorithm, especially when there are few data
available and/or high noise, because in general both large and small scales
components are needed to estimate the regression function.

We ran the algorithm for $n=128$ and $n=1024$ data, and the performance is
measured by its average root mean square error, defined as the average of the
square root of the mean squared error
$n^{-1}\sum_{i=1}^{n}|\widehat{f}(x_i) - f_0(x_i)|^2$, with $\widehat{f}$
denoting the posterior mean and $f_0$ the true function. We ran on the same
dataset the Translation-Invariant with hard thresholding algorithm (TI-H) and
Symmlet8 wavelets (see \citet{Antoniadis2001}), which is one of the best
performing algorithm on this collection of test functions. We ran our algorithm
with Symmlet8 kernels to make this comparison more relevant, since the choice of
the kernel has major impact on the performance of the algorithm (see
\cref{sec:discussion} below).

\subsubsection{Alternatives}
\label{sec:alternatives}

In \cite{Wolpert2011}, authors develop a reversible-jump MCMC scheme where the
random measure is thresholded, \textit{i.e.} small jumps are removed, yielding
to a compound Poisson process approximation of the random measure, with
almost-surely a finite number of jumps, allowing numerical computations. We also
ran their algorithm with a thresholding level of $\epsilon = 0.05$ (which seems
to give the best performance), a $\mathrm{Ga}(15,1)$ prior on $\eta$, and all
other parameters being exactly the same as described in the previous section. We
use the criteria of \cref{sec:assess-conv-mark} to stop the running of the
chain.

\subsubsection{Choosing the number of particles}
\label{sec:choos-numb-part}

It is not clear how to choose the number of particles in the algorithm. In
theory, the higher is the better. In practice, however,  we recommend choosing
the number of particles according to the acceptance rate of particles weights
move in step 3 of the algorithm. We found in practice that a level of acceptance
between 20\% and 30\% is acceptable, as illustrated in
\cref{fig:monitor-particles}.

\begin{figure}[!htb]
  \centering
  \includegraphics[width=0.98\linewidth]{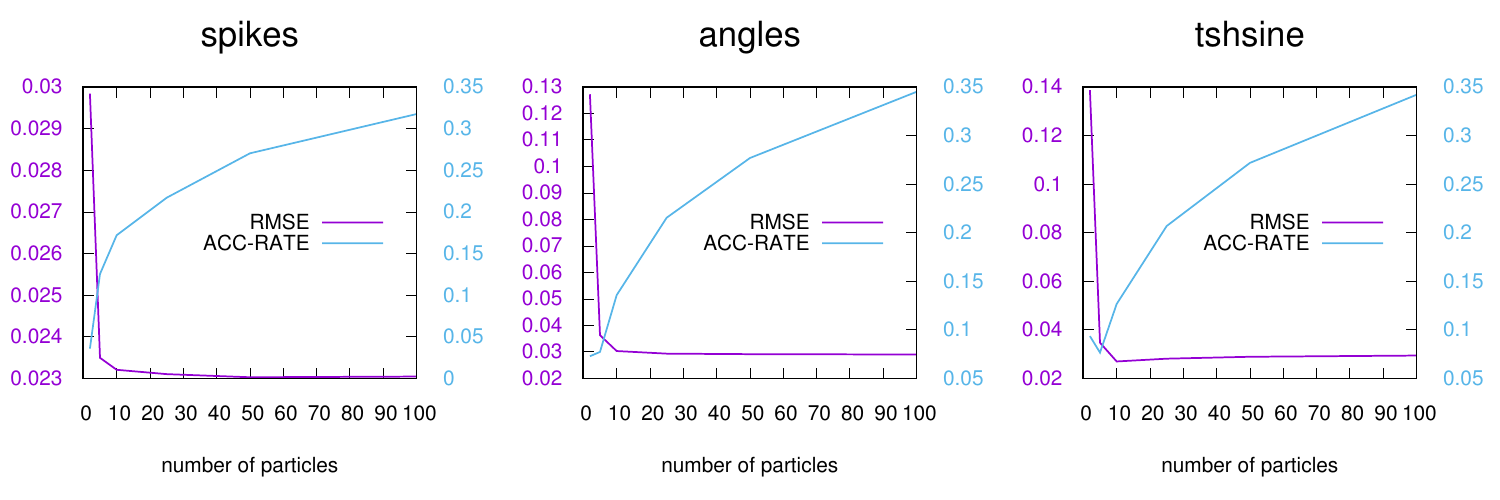}
  \caption{Mean over 100 runs of RMSE versus acceptance rate in step 3 of the
    algorithm for some typical test functions. For each signal the number of
    covariates is set to 128 and the RNSR is equal to 3.}
  \label{fig:monitor-particles}
\end{figure}

\subsubsection{Simulation results}
\label{sec:simulation-results}

In \cref{tab:tab-summary-128,tab:tab-summary-1024} we summarize the results for
location-scale mixtures of Gaussians and Symmlet8 produced by the algorithm of
\cref{sec:sampling-algorithm} and by the RJMCMC algorithm of
\citet{Wolpert2011}, with the TI-H method as reference. We used $p=150$
particles for both the datasets with $n=128$ covariates and $n=1024$ covariates,
which is a nice compromise in terms of performance and computational
cost. Regarding our algorithm and the RJMCMC algorithm, no particular effort was
made to determine the value of the fixed parameters.

\begin{table}[!htb]
  \centering
  \begin{tabular}{l|c|cc|cc}
    \toprule
    & TI-H   & \multicolumn{2}{c|}{Gibbs} & \multicolumn{2}{c}{RJMCMC}\\
    \midrule
    Function & Symm8 & Gauss & Symm8 & Gauss & Symm8 \\
    \midrule
    step      & 0.0589 & 0.0517 & 0.0551 & 0.0550 & 0.0565 \\
    wave      & 0.0319 & 0.0323 & 0.0306 & 0.0342 & 0.0370 \\
    blip      & 0.0307 & 0.0301 & 0.0316 & 0.0323 & 0.0373 \\
    blocks    & 0.0464 & 0.0343 & 0.0374 & 0.0383 & 0.0418 \\
    bumps     & 0.0285 & 0.0162 & 0.0229 & 0.0224 & 0.0345 \\
    heavisine & 0.0257 & 0.0267 & 0.0264 & 0.0280 & 0.0289 \\
    doppler   & 0.0443 & 0.0506 & 0.0418 & 0.0526 & 0.0493 \\
    angles    & 0.0293 & 0.0266 & 0.0282 & 0.0274 & 0.0305 \\
    parabolas & 0.0344 & 0.0301 & 0.0307 & 0.0312 & 0.0396 \\
    tshsine   & 0.0255 & 0.0285 & 0.0277 & 0.0291 & 0.0339 \\
    spikes    & 0.0237 & 0.0178 & 0.0207 & 0.0199 & 0.0218 \\
    corner    & 0.0177 & 0.0171 & 0.0170 & 0.0182 & 0.0255 \\
    \bottomrule
  \end{tabular}
  \caption{Summary of root mean squared errors of different algorithms for
    $n=128$ covariates and a root signal to noise ratio of $3$.}
  \label{tab:tab-summary-128}
\end{table}

\begin{table}[!htb]
  \centering
  \begin{tabular}{l|c|cc|cc}
    \toprule
    & TI-H   & \multicolumn{2}{c|}{Gibbs} & \multicolumn{2}{c}{RJMCMC}\\
    \midrule
    Function & Symm8 & Gauss & Symm8 & Gauss & Symm8 \\
    \midrule
    step      & 0.0276 & 0.0268 & 0.0289 & 0.0282 & 0.0300 \\
    wave      & 0.0088 & 0.0118 & 0.0108 & 0.0133 & 0.0117 \\
    blip      & 0.0148 & 0.0162 & 0.0172 & 0.0180 & 0.0183 \\
    blocks    & 0.0222 & 0.0230 & 0.0241 & 0.0247 & 0.0256 \\
    bumps     & 0.0122 & 0.0132 & 0.0182 & 0.0201 & 0.0232 \\
    heavisine & 0.0154 & 0.0134 & 0.0139 & 0.0147 & 0.0147 \\
    doppler   & 0.0180 & 0.0207 & 0.0196 & 0.0261 & 0.0225 \\
    angles    & 0.0123 & 0.0120 & 0.0123 & 0.0125 & 0.0128 \\
    parabolas & 0.0135 & 0.0124 & 0.0132 & 0.0147 & 0.0145 \\
    tshsine   & 0.0107 & 0.0109 & 0.0111 & 0.0131 & 0.0120 \\
    spikes    & 0.0110 & 0.0075 & 0.0095 & 0.0095 & 0.0103 \\
    corner    & 0.0077 & 0.0075 & 0.0081 & 0.0095 & 0.0085 \\
    \bottomrule
  \end{tabular}
  \caption{Summary of root mean squared errors of different algorithms for
    $n=1024$ covariates and a root signal to noise ratio of $3$.}
  \label{tab:tab-summary-1024}
\end{table}

Obviously the Gibbs algorithm allow for sampling the \textit{full} posterior
distribution, pemitting estimation of posterior credible bands, as illustrated
in \mbox{\cref{fig:crediblebands-gauss,fig:crediblebands-symmlet8}}, where the
credible bands were drawn retaining the $95\%$ samples with the smaller
$\ell_{2}$-distance with respect to the posterior mean estimator. Although the
algorithm samples an approximated version of the model, it is found that the
accuracy of credible bands is quite good since the true regression function
almost never comes outside the sampled $95\%$ bands, as it is visible in the
example of
\mbox{\cref{fig:crediblebands-gauss,fig:crediblebands-symmlet8}}. Despite the
algorithm efficiency, future work should be done to develop new sampling
techniques for regression with mixture models, mainly to improve computation
cost.

\begin{figure}[!p]
  \centering
  \includegraphics[scale=0.75]{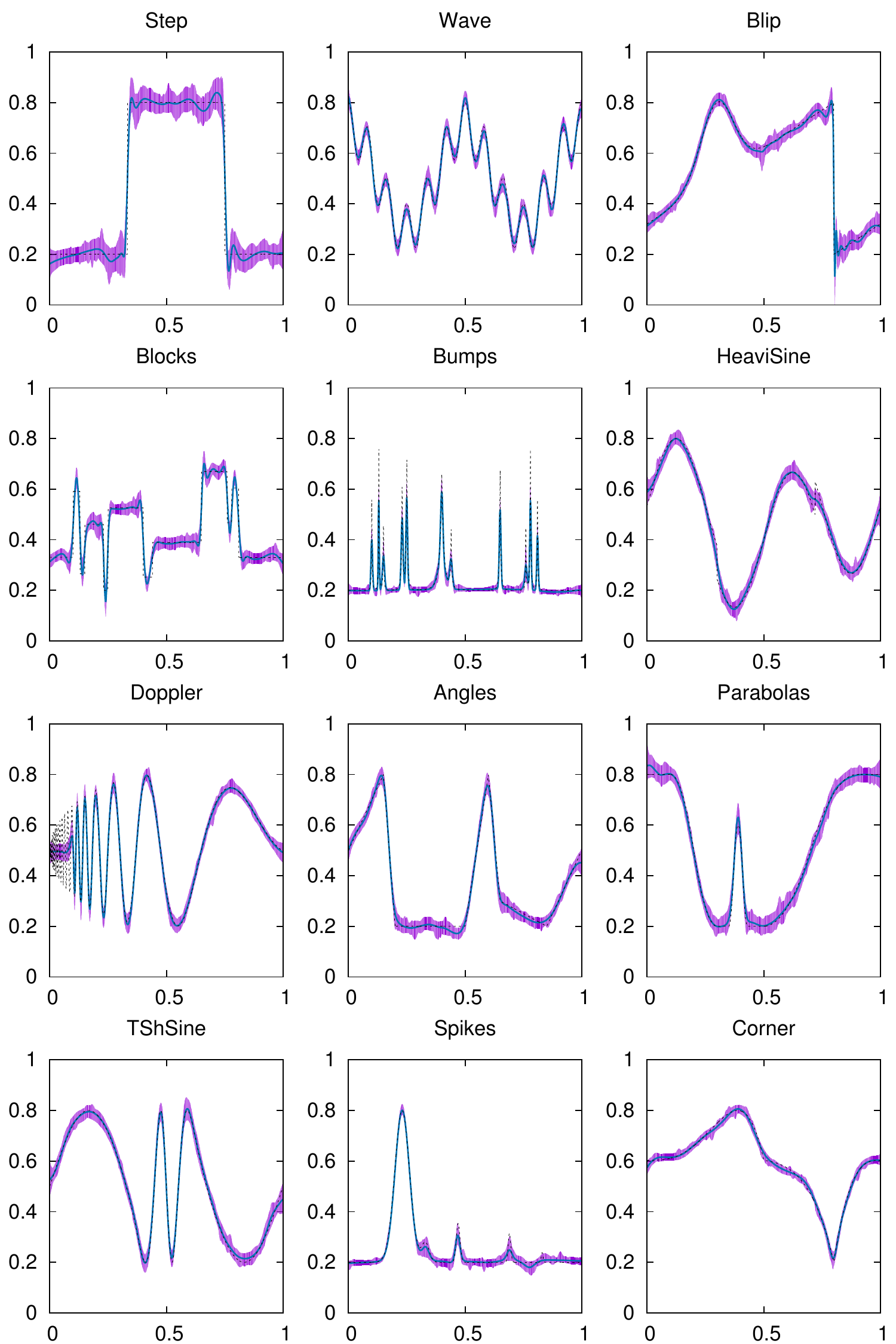}
  \caption{Example of simulation results using location-scale mixtures of
    Gaussians. The root signal-to-noise ratio is equal to 3 for sample size of
    1024 design points. The true regression function is represented with dashes,
    the mean of the sampled posterior distribution in blue and sampled 95\%
    credible bands in pink.}
  \label{fig:crediblebands-gauss}
\end{figure}

\begin{figure}[!p]
  \centering
  \includegraphics[scale=0.75]{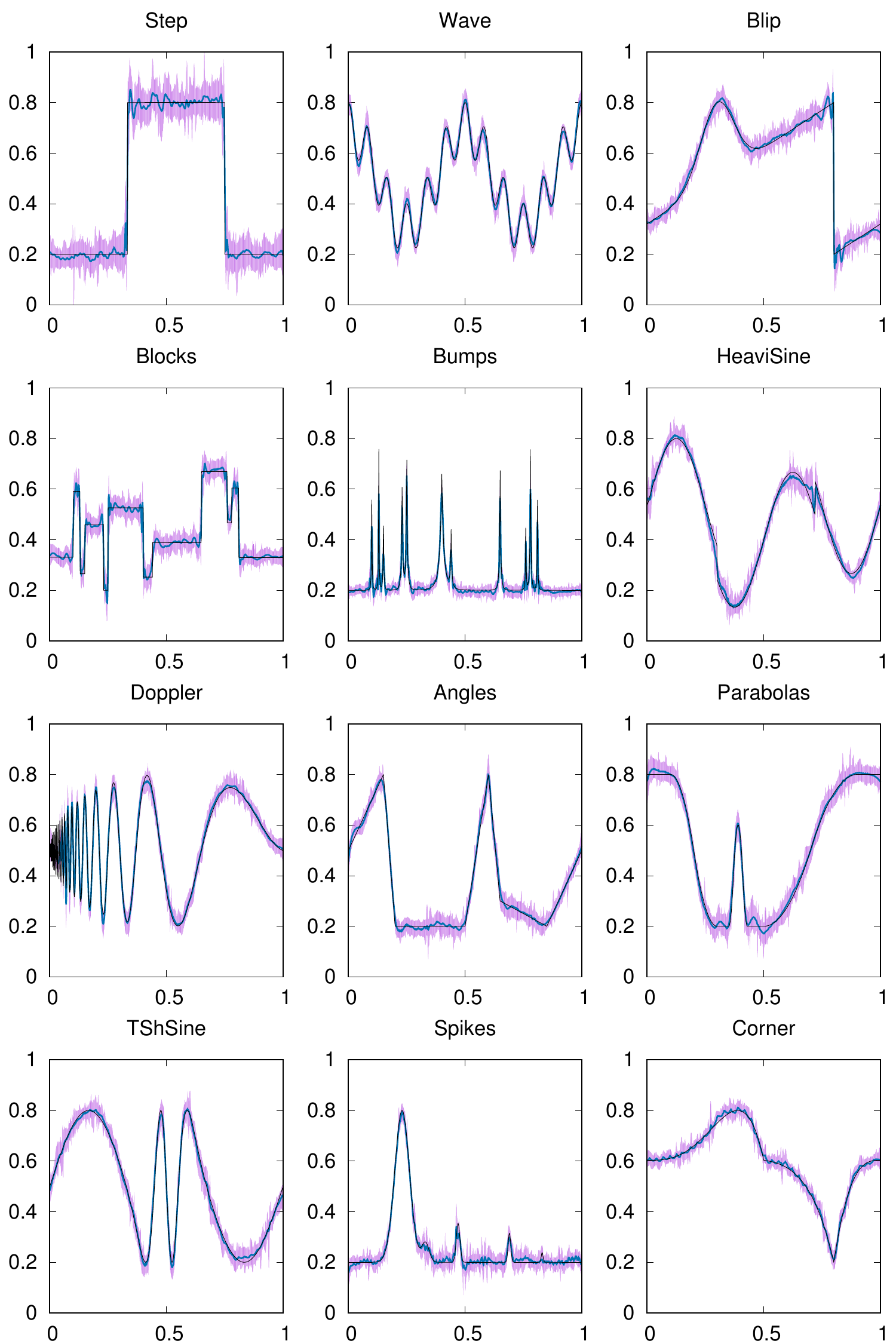}
  \caption{Example of simulation results using location-scale mixtures of
    Symmlet8. The root signal-to-noise ratio is equal to 3 for sample size of
    1024 design points. The true regression function is represented with dashes,
    the mean of the sampled posterior distribution in blue and sampled 95\%
    credible bands in pink.}
  \label{fig:crediblebands-symmlet8}
\end{figure}

\subsubsection{Discussion}
\label{sec:discussion}

Obviously, the computation cost for our algorithm is high compared to TI-H, or
any other classical wavelet thresholding method, even considering that it can
intrinsically compute credible bands. But, as mentioned in
\citet{Antoniadis2001}, the choice of the kernel is crucial to the performance
of estimators. The attractiveness of mixtures then comes because we are not
restricted to location-scale or location-modulation kernels, and almost any
function is acceptable as a kernel, which is not the case for most regression
methods. Moreover, there is no requirements on how the data are spread, which
makes the method interesting in inverse problems, such as in the next section.

\subsection{Multivariate inverse problem example}
\label{sec:2d-example}

Many medical imaging modalities, such as X-ray computed tomography imaging (CT),
can be described mathematically as collecting data in a Radon transform
domain. The process of inverting the Radon transform to form an image can be
unstable when the data collected contain noise, so that the inversion needs to
be regularized in some way. Here we model the image of interest as a measurable
function $f : \mathbb R^2 \rightarrow \mathbb R$, and we propose to use a
location-scale mixtures of Gaussians to regularize the inversion of the Radon
transform.

More precisely, the Radon transform
$R_f : \mathbb R^+ \times [0,\pi] \rightarrow \mathbb R$ of $f$ is such that
$R_f(r,\theta) = \int_{-\infty}^{+\infty}f(r\cos\theta - t\sin \theta, r\sin
\theta + \cos\theta)\,dt$. Then we consider the following model. Let
$n,m \geq 1$. Assuming that the image is supported on $[-1,1]^2$ we let
$r_1,\dots,r_n$ equidistributed in $[-\sqrt{2},\sqrt{2}]$ and
$\theta_1,\dots,\theta_m$ equidistributed in $[0,\pi]$. Then,
\begin{gather*}
  Y_{nm} \sim \mathrm{N}(R_f(x_n,\theta_m),\sigma^2)\quad \forall n,m\\
  f \sim \Pi,
\end{gather*}
where $\Pi$ is a symmetric Gamma process location-scale mixture with base
measure $\alpha F_A \times F_{\mu}$ on $\mathcal{E}\times \mathbb R^2$,
$\alpha > 0$, and scale parameter $\eta > 0$. In the sequel, we use a normal
distribution with mean zero and covariance matrix $\mathrm{diag}(\tau,\tau)$ as
distribution for $F_{\mu}$. Regarding $F_A$, the choice is more delicate; we
choose a prior distribution over the set of \textit{shearlet-type} matrices of
the form
\begin{equation*}
  \begin{pmatrix}
    1 & s\\
    0 & 1
  \end{pmatrix}
  \begin{pmatrix}
    a & 0\\
    0 & \sqrt{a}
  \end{pmatrix},
\end{equation*}
where we set a $\mathrm{N}(1,\sigma_a^2)$ distribution over the coefficient $a$
and $\mathrm{N}(0,\sigma_s^2)$ over the coefficient $s$. This type of prior
distribution for $F_A$ is particularly convenient for capturing anisotropic
features such as edges in images \citep{EasleyColonnaLabate2009}.

We ran our algorithm for $n=256$ and $m=128$ ($32768$ observations, a small
amount), using the Shepp and Logan phantom as original image
\citep{SheppLogan1974}. The variance of the noise is $\sigma^2 = 0.1$, whereas
the image take value between $0$ and $2$. Both the original image and the
reconstruction are visible in \cref{fig:ct-imaging}. Finally, we should mention
that the choice of the Gaussian kernel for the mixture is convenient since it
allows to compute the likelihood analytically. However, from a practical side, a
full implementation of the algorithm with the intention of reconstructing CT
images may benefit from using a different kernel.
\begin{figure}[!htb]
  \centering
  \includegraphics[width=0.75\linewidth]{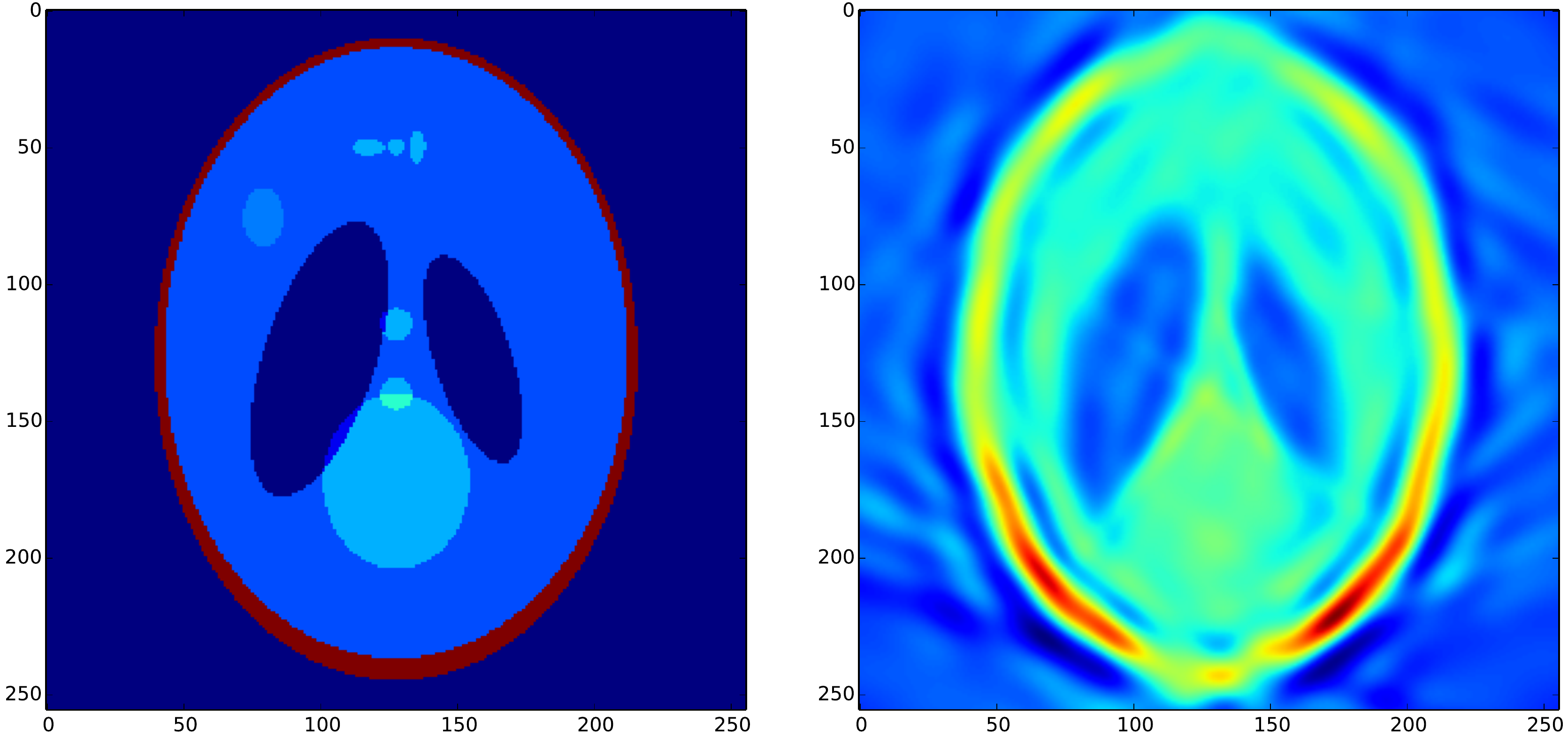}
  \caption{Simulation of X-ray computed tomography imaging using symmetric Gamma
  process location-scale mixture of Gaussians. On the left: the original
  image. On the right: the reconstructed image from $32768$ observations of the
  Radon transform of the original image in a Gaussian noise.}
  \label{fig:ct-imaging}
\end{figure}

\section{Rates of convergence}
\label{sec:rates-convergence-loc-scale}

In this section, we investigate posterior convergence rates in fixed design
Gaussian regression for both symmetric Gamma location-scale mixtures and
symmetric Gamma location-modulation mixtures.

\subsection{Notations}
\label{sec:notations}

In the sequel we use repeatedly the following notations.

\begin{itemize}
  \item The conventional multi-index notation, for all
  $\alpha = (\alpha_1,\dots,\alpha_d)\in\integers^d$ and all
  $z = (z_1,\dots,z_d) \in\reals^d$ we write
  $|\alpha| := \alpha_1+\dots+\alpha_d$, $\alpha! := \alpha_1!\dots\alpha_d!$,
  and $z^{\alpha} := z_1^{\alpha_1}\dots z_d^{\alpha_d}$. Moreover, for all
  $f:\reals^d\rightarrow\reals$ with continuous $k$-th order partial
  derivatives at $x\in\reals^d$ we write
  \begin{align*}
    D^{\alpha}f(x) := \frac{\partial^{|\alpha|}f}{\partial
    z_1^{\alpha_1}\dots\partial z_n^{\alpha_d}}(x),\quad |\alpha| \leq k.
  \end{align*}
  \item Let $\Omega$ be an open subset of $\reals^d$ and $\overline{\Omega}$ be
  the closure of $\Omega$. For any $\beta > 0$, we define
  $\mathcal{C}^{\beta}(\overline{\Omega})$, the Hölder space on
  $\overline{\Omega}$, as the set of all functions on $\overline{\Omega}$ such
  that
  $\|f\|_{\mathcal{C}^{\beta}} := \max_{|\alpha| \leq k}\sup_{x\in
    \Omega}|D^{\alpha}f(x)| + \max_{|\alpha| = k}\sup_{x\ne y \in
    \Omega}|D^{\alpha}f(x) - D^{\alpha}f(y)|/|x - y|^{\beta - k}$ is finite,
  where $k$ is the largest integer strictly smaller than $\beta$.
  \item We denote by $|\cdot|_d$ the standard euclidean norm on $\reals^d$, and,
  for any $x,y\in\reals^d$, $xy$ is the standard inner product. For any
  $d\times d$ matrix $A$ with real eigenvalues, we denote
  $\lambda_1(A) \geq \dots \geq \lambda_d(A)$ its eigenvalues in decreasing
  order, $\|A\| := \sup_{x\ne 0}|A x|_d/|x|_d$ its spectral norm, and
  $\|A\|_{\max} := \max_{i,j}|A_{ij}|$, where $A_{ij}$ are the entries of $A$.
  \item Given a signed measure $\mu$ on a measurable space
  $(\mathcal{X},\mathcal{A})$, we let $\mu^+$ and $\mu^-$ denote respectively
  the positive and negative part of the Jordan decomposition of $\mu$. Also,
  $|\mu| = \mu^{+} + \mu^-$ denote the total variation measure of $\mu$.
  \item Inequalities up to a generic constant are denoted by the symbols
  $\lesssim$ and $\gtrsim$.
\end{itemize}

\subsection{The model}
\label{sec:the-model}

We consider the problem of a random response $Y$ corresponding to a
deterministic covariate vector $x$ taking values in $[-S,S]^d$ for some $S >
0$. We aim at estimating the regression function $f:[-S,S]^d\rightarrow \reals$
such that $f(x_i) = \Esp Y_i$, based on independent observations of $Y$. More
precisely, the nonparametric regression model we consider is the following,
\begin{align*}
  Y_i|\epsilon_i
  &= f(x_i) + \epsilon_i,\quad i=1,\dots,n,\\
  \epsilon_1,\dots,\epsilon_n|\sigma^2
  &\overset{\mathrm{i.i.d}}{\sim}
    \mathcal{N}(0,\sigma^2),\quad\textrm{independently of $(f,\sigma)$},\\
  (f,\sigma) &\sim \Pi,
\end{align*}
with $\Pi$ the distribution on an abstract space $\Theta$, given by
$\sigma \sim P^{\sigma}$ independently of $f$ drawn from the distribution of a
symmetric Gamma process mixture.

\subsection{A general result}
\label{sec:general-result}

Let $P_{\theta,i}$ denote the distribution of of $Y_i$ under the parameter
$\theta = (f,\sigma)$, $P_{\theta}^n$ denote the joint distribution of
$(Y_1,\dots,Y_n)$, $P_{\theta}^{\infty}$ the distribution of the infinite
sequence $(Y_1,\dots,Y_{\infty})$, and
$\|f\|_{2,n}^2 := n^{-1}\sum_{i=1}^n|f(x_i)|^2$. Let define the distance
$\rho_n(\theta_0,\theta_1) := \|f - f_0\|_{2,n} + |\log \sigma_0 - \log
\sigma_1|$. For the regression method based on $\Pi$, we say that its posterior
convergence rate at $\theta_0$ in the metric $\rho_n$ is $\epsilon_n$ if there
is $M < +\infty$ such that
\begin{align}
  \label{eq:9}
  \lim_{n\rightarrow \infty}\Pi\left(\Set*{ \theta \in
  \Theta \given \rho_n(\theta,\theta_0) > M \epsilon_n } | Y_1,\dots,Y_n \right)
  = 0\quad P_{\theta_0}^{\infty}\textrm{-a.s}.
\end{align}

Most of the approach to rates of convergence rely on idea coming from density
mixtures models \citep{Ghosal2000,Shen2013, Canale2013}. Indeed, we prove that
\cref{eq:9} hold by verifying a set of sufficient conditions established in
\cref{thm:1}. For $\epsilon > 0$ and any subset $A$ of a metric space equipped
with metric $\rho$, let $N(\epsilon, A, \rho)$ denote the $\epsilon$-covering
number of $A$, \textit{i.e.} the smallest number of balls of radius $\epsilon$
needed to cover $A$. Also, for all $i=1,\dots,n$, define
$K_i(\theta_0,\theta) := \int (\log dP_{\theta_0,i}/ dP_{\theta,i})\,
dP_{\theta_0,i}$ and
$V_{2,i}(\theta_0,\theta) := \int (\log dP_{\theta_0,i}/dP_{\theta,i} -
K_i(\theta_0,\theta))^2\, dP_{\theta_0,i}$, and let
\begin{align*}
  K_n(\theta_0,\epsilon)
  := \Set*{ \theta \given \frac 1n \sum_{i=1}^n K_i(\theta_0,\theta) \leq
  \epsilon^2,\ \frac 1n \sum_{i=1}^n V_{2,i}(\theta_0,\theta) \leq \epsilon^2},
\end{align*}
be the Kullback-Leibler ball of size $\epsilon$ around
$\theta_0:= (f_0,\sigma_0)$. \Cref{thm:1} is the analogue of theorem~5 in
\citet{GhosalVanDerVaart2007} for the Gaussian mean regression with fixed
design~; the major difference reside on constructing suitable test functions,
and extra cares have to taken regarding the fact that observations are not
i.i.d. The proof of \cref{thm:1} is given in \cref{sec:proofs-crefs-result}.

\begin{theorem}
  \label{thm:1}
  Let $K := 3(32 \vee 4\sigma_0^2)^{-1}$, and $\epsilon_n \rightarrow 0$ with
  $n\epsilon_n^2 \rightarrow \infty$.  Suppose that $\Theta_n \subset \Theta$ is
  such that $\Pi(\Theta_n^c) \lesssim \e^{-3 n \epsilon_n^2}$ for $n$ large
  enough. Assume that $\Theta_n \subseteq \cup_j \Theta_{n,j}$ is such that for
  some $M > 0$,
  \begin{gather*}
    \textstyle \lim_{n} \sum_{j} \sqrt{N(M\epsilon_n,\Theta_{n,j},\rho_n)}
    \sqrt{\Pi(\Theta_{n,j})} \e^{-(KM^2 - 2) n\epsilon_n^2} = 0,\\
    \Pi(K_n(\theta_0,\epsilon_n)) \gtrsim \e^{- n \epsilon_n^2}.
  \end{gather*}
  Then
  $\Pi(\theta \in \Theta\, :\, \rho_n(\theta_0,\theta) > 11
  M\epsilon_n|Y_1,\dots,Y_n) \rightarrow 0$ in $P_{\theta_0}^n$-probability.
\end{theorem}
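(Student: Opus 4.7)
The strategy is to adapt the three-step testing argument of \citet{GhosalVanDerVaart2007} to the non-i.i.d.\ structure of fixed-design Gaussian regression, with the added complication that $\sigma$ is unknown and contributes to $\rho_n$ through $|\log\sigma-\log\sigma_0|$.

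First, I would establish a denominator lower bound
\begin{align*}
  \int_{\Theta}\prod_{i=1}^{n}\frac{dP_{\theta,i}}{dP_{\theta_0,i}}(Y_i)\,d\Pi(\theta) \geq \Pi(K_n(\theta_0,\epsilon_n))\,\e^{-2n\epsilon_n^2},
\end{align*}
valid on an event of $P_{\theta_0}^{n}$-probability tending to one. Restricting the integral to $K_n(\theta_0,\epsilon_n)$ and applying Jensen's inequality reduces the claim to a Chebyshev estimate on $\sum_{i}\log(dP_{\theta_0,i}/dP_{\theta,i})(Y_i)$, whose mean and variance are directly controlled by the two inequalities defining $K_n$. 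Combined with the hypothesis $\Pi(K_n(\theta_0,\epsilon_n))\gtrsim \e^{-n\epsilon_n^2}$, this yields a denominator at least of order $\e^{-3n\epsilon_n^2}$.

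The chief difficulty is the construction of exponentially consistent tests. For each $\theta_1=(f_1,\sigma_1)$ with $\rho_n(\theta_0,\theta_1)>\epsilon$ I would build a test $\phi_{n,\theta_1}$ of $\{\theta_0\}$ against a small $\rho_n$-ball around $\theta_1$, satisfying both type~I and type~II errors bounded by $\e^{-Kn\epsilon^{2}}$ with $K = 3(32\vee 4\sigma_0^2)^{-1}$. Since $\rho_n$ decomposes as $\|f-f_0\|_{2,n}+|\log\sigma-\log\sigma_0|$, I would take $\phi_{n,\theta_1}$ to be the maximum of (i) a linear test based on the Gaussian statistic $\sum_{i}(Y_i-\tfrac12(f_0+f_1)(x_i))(f_1-f_0)(x_i)$, which separates the mean components via a Gaussian tail bound, and (ii) a chi-squared-style test built from $n^{-1}\sum_i (Y_i-f(x_i))^2$, which separates the scale components via a $\chi_n^2$ deviation inequality. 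The explicit expression for $K$ then arises from combining these two one-sided bounds uniformly over a $\rho_n$-neighborhood in which $\sigma$ is allowed to vary. A union bound over a minimal $M\epsilon_n$-net $\{\theta_{n,j,k}\}_k$ of each $\Theta_{n,j}$ then produces the global test $\Phi_n := \max_{j,k}\phi_{n,\theta_{n,j,k}}$, with type~I error at most $\sum_{j}N(M\epsilon_n,\Theta_{n,j},\rho_n)\,\e^{-KM^{2}n\epsilon_n^{2}}$.

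The final step is the standard posterior decomposition
\begin{align*}
  \Pi(\rho_n(\theta,\theta_0)>11M\epsilon_n\mid Y_1,\dots,Y_n) \leq \Phi_n + (1-\Phi_n)\Pi(\Theta_n^c\mid Y_1,\dots,Y_n) + (1-\Phi_n)\Pi(B_n\mid Y_1,\dots,Y_n),
\end{align*}
where $B_n := \{\theta\in\Theta_n : \rho_n(\theta,\theta_0)>11M\epsilon_n\}$. The first term vanishes in probability by the type~I bound of $\Phi_n$. The second is controlled using $\Pi(\Theta_n^c)\lesssim \e^{-3n\epsilon_n^2}$, the denominator lower bound, and a Markov-type argument on the numerator $\int_{\Theta_n^c}\prod_i dP_{\theta,i}/dP_{\theta_0,i}(Y_i)\,d\Pi(\theta)$. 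For the third, taking $P_{\theta_0}^{n}$-expectation and applying Cauchy--Schwarz sieve by sieve inside each $\Theta_{n,j}$ reduces the bound to exactly $\sum_{j}\sqrt{N(M\epsilon_n,\Theta_{n,j},\rho_n)}\sqrt{\Pi(\Theta_{n,j})}\,\e^{-(KM^{2}-2)n\epsilon_n^{2}}$, which is assumed to tend to zero. The main obstacle is step two: building a single test whose error exponents are uniform over the scale part of the $\rho_n$-ball, which is what ultimately pins down the explicit form of the constant $K$.
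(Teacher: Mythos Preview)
Your overall architecture matches the paper's: denominator control via the Kullback--Leibler ball, disposal of $\Theta_n^c$ by Fubini, and slice-by-slice testing combining a linear statistic for the mean part with a $\chi^2$-type statistic for the scale part (this is exactly the content of the paper's Proposition~10 and Lemma~5). The identification of where the constant $K=3(32\vee 4\sigma_0^2)^{-1}$ comes from is also right.

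There is, however, a genuine gap in how you arrive at the geometric-mean form $\sqrt{N(M\epsilon_n,\Theta_{n,j},\rho_n)}\sqrt{\Pi(\Theta_{n,j})}$. With the tests as you describe them, the natural bound on the $j$th slice is
\[
P_{\theta_0}^n\psi_{n,j} \;+\; \sup_{\theta}P_\theta^n(1-\psi_{n,j})\,\Pi(\Theta_{n,j})\,\e^{2n\epsilon_n^2}
\;\leq\; N_j\,\e^{-KM^2n\epsilon_n^2} \;+\; \Pi(\Theta_{n,j})\,\e^{-(KM^2-2)n\epsilon_n^2},
\]
and no Cauchy--Schwarz step converts this additive bound into $\sqrt{N_j\Pi(\Theta_{n,j})}$; the inequality goes the wrong way. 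The paper obtains the square-root structure by a different mechanism: it builds into the local tests an extra free parameter $0<\alpha_j\leq 1$ (appearing as a $\log(1/\alpha)$ shift in both the likelihood-ratio rejection region and the $\chi^2$ threshold), so that the type~I error carries a factor $\alpha_j$ and the type~II error a factor $\alpha_j^{-1}$. Optimising $\alpha_j=\sqrt{\Pi(\Theta_{n,j})/N_j}$ then balances the two terms and yields exactly the summand in the hypothesis. Your test description omits this tuning knob, and without it the summability condition you invoke is not the one you would actually need. If you want to salvage the argument as written, you must either insert such a parameter into the construction of $\phi_{n,\theta_1}$, or replace the hypothesis by the stronger (and in the applications harder to verify) condition $\sum_j N_j\,\e^{-KM^2n\epsilon_n^2}\to 0$.
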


\subsection{Supplementary assumptions}
\label{sec:suppl-assumpt}

In order to derive rates of convergence (and only for this) we make
supplementary assumptions on the choice of the mother function $g$ and of the
base measure $\alpha F$.

\subsubsection{Location-scale mixtures}
\label{sec:locat-scale-mixt-1}

We restrict our discussion to priors for which the following conditions are
verified. We assume that

\begin{itemize}
  \item $g : \mathbb R^d \rightarrow \mathbb R$ is a non zero Schwartz function
  such that $|g(x)| \lesssim \exp(-C_0|x|_d^{\tau})$ for some $C_0, \tau >
  0$. We assume that there is $0 \leq \gamma < 1$ such that
  $\sup_{|\alpha|=k}|D^{\alpha}g(0)| \lesssim \exp(\gamma k \log k)$ for all $k$
  large enough ; this last assumption is not obvious, it is for example met with
  $\gamma = 1/2$ if $g$ is a multivariate Gaussian (see \cref{pro:7} in
  appendix).
  \item $\alpha F := \alpha F_A \times F_{\mu}$, where $F_A$ is a probability
  measure on $\mathcal{E}_s$, the space of symmetric positive definite
  $d\times d$ reals matrices, and $F_{\mu}$ a probability measure on
  $[-2S,2S]^d$. We also assume that there exist positive constants $\kappa > 0$,
  $\kappa^{*} > d(d-1)$, $a_1,\dots,a_5$, $b_1,\dots,b_{6}$, $C_1,\dots,C_{3}$
  such that for any $0 < s_1 \leq \dots\leq s_d$, any $z_0 \in [-2S,2S]^d$, all
  $t\in (0,1)$ and all $x>0$ sufficiently large
  \begin{gather}
    \label{eq:16}
    F_{\mu}(z\ :\ |z - z_0| \leq t) \geq b_1t^{a_1},\\
    \label{eq:19}
    F_A(A\ :\ \lambda_d(A^{-1}) \geq x) \leq b_2\exp(-C_2 x^{a_2}),\\
    \label{eq:18}
    F_A(A\ : \ \lambda_1(A^{-1}) < 1/x) \leq b_3x^{-a_3},\\
    \label{eq:14}
    F_A\left( A\ :\ s_j < \lambda_j(A^{-1}) < s_j(1+ t),\ 1\leq j \leq d \right)
    \geq
    b_4s_d^{a_4}t^{a_5}\exp(-C_3s_d^{\kappa/2}),\\
    \label{eq:31}
    F_A(A\ : \ \lambda_1(A)/\lambda_d(A) > x) \leq b_6x^{-\kappa^{*}}.
  \end{gather}
  \Cref{eq:19,eq:18,eq:14} are classical and are met for instance with
  $\kappa = 2$ if $F_A$ is the inverse-Wishart distribution
  \citep[lemma~1]{Shen2013}. For a thorough discussion about \cref{eq:31} we
  refer to \citet{Canale2013} and references therein.
  \item $P^{\sigma}$ is a probability distribution on $(0,\infty)$. We also
  assume that there are positive constants $a_7,a_8,a_9$, $b_7,b_8$, $C_8$, and
  $b_9$ eventually depending on $\sigma_0 > 0$, such that for all $t\in (0,1)$
  \begin{gather}
    \label{eq:29}
    P^{\sigma}(\sigma\ : \ \sigma > x) \leq b_7 x^{-a_7},\\
    \label{eq:27}
    P^{\sigma}(\sigma\ : \ \sigma \leq 1/x) \leq b_8\exp(-C_8 x^{a_8}),\\
    \label{eq:8b}
    P^{\sigma}\left(\sigma \ :\ \sigma_0 \leq \sigma \leq \sigma_0(1 + t)\right)
    \geq b_9 t^{a_9}.
  \end{gather}
\end{itemize}

\subsubsection{Location-modulation mixtures}
\label{sec:locat-modul-mixt-1}

We restrict our discussion to priors for which the following conditions are
verified. We assume that

\begin{itemize}
  \item $g : \mathbb R^d \rightarrow \mathbb R$ is a non zero Schwartz function
  such that $g(x) \geq 0$ for all $x\in \mathbb R^d$ and
  $|g(x)| \lesssim \exp(-C_0|x|_d^{\tau})$ for some $C_0 > 0$ and $\tau > 1$. We
  assume that there is a set $E \subseteq [-\pi,\pi]^d$ with strictly positive
  Lebesgue measure and a constant $C > 0$ such that $g(x) \geq C$ on $E$. We
  also assume that there is $0 \leq \gamma < 1$ such that
  $\sup_{|\alpha|=k}|D^{\alpha}g(0)| \lesssim \exp(\gamma k \log k)$ for all $k$
  large enough. As in the previous section, these assumptions are met for the
  multivariate Gaussian with $E=[-\pi,\pi]^d$, $\gamma = 1/2$ and $\tau = 2$
  (see \cref{pro:7} in appendix).
  \item $\alpha F := \alpha F_\xi \times F_{\mu} \times F_{\phi}$, where
  $F_{\xi}$ is a probability measure on $\mathbb R^d$, $F_{\mu}$ a probability
  measure on $[-2S,2S]^d$, and $F_{\phi}$ a probability measure on
  $[0,\pi/2]$. For all $t\in (0,1)$ and all $z_0\in [-2S,2S]^d$ we assume that
  $F_{\mu}$ satisfies \cref{eq:16}. We assume that there is positive constants
  $a_{10},b_{10}$ such that for all $t\in (0,1)$ and all $\phi_0 \in [0,\pi/2]$
  we assume that
  $F_{\phi}(\phi\ :\ |\phi - \phi_0| \leq t) \geq b_{10}t^{a_{10}}$. We also
  assume that there exist positive constants $\eta > (d-1)/2$,
  $a_{12},a_{13},b_{11},b_{12}$ such that for all $t\in (0,1)$, all
  $\xi_0 \in \mathbb R^d$ and for all $x > 0$
  \begin{gather}
    \label{eq:11}
    F_{\xi}(\xi\ :\ |\xi|_d \geq x) \leq b_{11}(1 + x)^{-2(\eta + 1)}\\
    \label{eq:15}
    F_{\xi}(\xi\ : \ |\xi - \xi_0|_d \leq t) \geq
    b_{12}|\xi_0|_d^{-a_{12}}t^{a_{13}}.
  \end{gather}
  \item $P^{\sigma}$ satisfies the same assumptions of \cref{eq:29,eq:27,eq:8b}.
\end{itemize}

\subsection{Results}
\label{sec:results}

\Cref{thm:1} serves as a starting point for proving rates of contraction for
symmetric Gamma process location-scale and location-modulation mixtures in the
model of \cref{sec:the-model}. The proofs of the next theorems resemble to
\citet{DeJongeVanZanten2010}, but, they consider only a location mixture with
locations taken on a lattice, allowing for a very specific construction of the
sets $\Theta_n$. Here, we do not assume that locations are spread over a
lattice, which makes the construction of $\Theta_n$ more involved. Our
construction is inspired from \citet{Shen2013} for Dirichlet processes mixtures,
but adapted to symmetric Gamma processes (indeed, the same construction should
work for many Lévy processes). Also, \cref{thm:1} allows for partitioning
$\Theta_n$ onto slices $\Theta_{n,j}$, a step which is unnecessary for location
mixtures \citep{DeJongeVanZanten2010,Shen2013}, but yields to better rates and
weaker assumptions on the prior when dealing with location-scale
\citep{Canale2013} and location-modulation mixtures.

Regarding the model of \cref{sec:the-model}, with deterministic covariates
$x_1,\dots,x_n$ arbitrary spread in $[-S,S]^d$, we have the following theorem
for location-scale mixtures. We notice that unlike \citet{DeJongeVanZanten2010},
we do not assume that the covariates are spread on a strictly smaller set than
$[-S,S]^d$, \textit{i.e.} the support of the covariates and the domain of the
regression function are the same.
\begin{theorem}
  \label{thm:3}
  Let $\zeta = 1 \vee 2/(\tau - \gamma \tau)$. Suppose that
  $f_0 \in \mathcal{C}^{\beta}[-S,S]^d$ for some $S > 0$. Under the assumptions
  of \cref{sec:suppl-assumpt}, the \cref{eq:9} holds for the location-scale
  prior with
  $\epsilon_n^2 = n^{-2\beta/(2\beta + d + \kappa/2)}(\log n)^{2\beta d(\zeta -
    1)/(2\beta + d + \kappa/2)}$.
\end{theorem}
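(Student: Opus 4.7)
The plan is to verify the three hypotheses of \cref{thm:1} with the announced rate, adapting to symmetric Gamma processes the Dirichlet-mixture strategy of \citet{Shen2013} and using the slicing refinement of \citet{Canale2013} to accommodate the signed, unbounded-variation nature of the random measure. Because the noise is Gaussian with unknown variance, a direct computation gives
\[
K_i(\theta_0,\theta) + V_{2,i}(\theta_0,\theta) \lesssim \frac{|f(x_i)-f_0(x_i)|^2}{\sigma_0^2} + \psi(\sigma_0,\sigma),
\]
with $\psi$ smooth and vanishing at $\sigma = \sigma_0$, so the Kullback--Leibler neighborhood $K_n(\theta_0,\epsilon_n)$ contains $\{(f,\sigma) : \|f-f_0\|_{2,n} \lesssim \epsilon_n,\ |\log\sigma - \log\sigma_0| \lesssim \epsilon_n\}$. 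Condition \cref{eq:8b} provides a polynomial lower bound on the $\sigma$-part, so the prior-mass hypothesis of \cref{thm:1} reduces to proving that the marginal prior on $f$ puts mass at least $\e^{-c n\epsilon_n^2}$ on the $\|\cdot\|_{2,n}$-ball of radius $\epsilon_n$ centered at $f_0$.

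For this step I would first approximate $f_0 \in \mathcal{C}^{\beta}[-S,S]^d$ by a location mixture $\int K_{\Sigma_n}(\cdot - \mu)\,q(\mu)\,d\mu$ at the fixed bandwidth $\Sigma_n = \sigma_n I$ with a signed density $q$ on $[-2S,2S]^d$, taking $\sigma_n^{\beta}$ of the order of $\epsilon_n$ up to logarithmic factors. The Schwartz decay of $g$ combined with the derivative-growth bound $\sup_{|\alpha|=k}|D^{\alpha}g(0)| \lesssim \e^{\gamma k \log k}$ permits a Taylor expansion of order $k \asymp \log n$ whose remainder is $O(\sigma_n^{\beta})$, in analogy with the Gaussian-mixture approximation lemma of \citet{Shen2013}; this is where $\zeta = 1 \vee 2/(\tau-\gamma\tau)$ enters, through the tail integrability of $g$. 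Next, I would discretize this smooth mixture on a grid of $\mathcal{E}_s \times [-2S,2S]^d$ into a finite signed mixture $\sum_k w_k K_{A_k}(\cdot - \mu_k)$ matching the P\'olya-urn predictive structure revealed by \cref{thm:2}. Conditioning on the urn, the remaining prior mass factors into Gamma densities on the weights $w_k$, so the desired lower bound follows from the base-measure conditions \cref{eq:16,eq:14} together with small-deviation bounds for the i.i.d.\ $\mathrm{SGa}(1,1)$ weights; the exponent $\kappa/2$ appearing in the final rate is precisely the contribution of the $\exp(-C_3 s_d^{\kappa/2})$ factor of \cref{eq:14}.

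The sieve $\Theta_n$ is then built as the intersection of the events: $\sigma \in [\underline{\sigma}_n, \overline{\sigma}_n]$; for every atom of $Q$, $\lambda_1(A^{-1}) \geq \underline{s}_n$ and $\lambda_d(A^{-1}) \leq \overline{s}_n$; a uniform condition-number bound $\lambda_1(A)/\lambda_d(A) \leq R_n$; and a total-variation bound $|Q|(\mathcal{E}_s \times \reals^d) \leq M_n$. With polylogarithmic thresholds for the spectral bounds and $M_n \asymp n\epsilon_n^2$, each complement probability is at most $\e^{-3n\epsilon_n^2}$ by \cref{eq:19,eq:18,eq:31,eq:29,eq:27} together with a Gamma-tail bound on the total mass of $Q$. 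I would then slice dyadically $\Theta_{n,j} = \Theta_n \cap \{|Q|(\mathcal{E}_s \times \reals^d) \in (2^{j-1},2^j]\}$; on each slice, a $\rho_n$-net is obtained by putting nets on the atomic parameters, yielding a log-covering number of order $j\, n\epsilon_n^2$ up to logarithmic factors. The main obstacle is the chaining sum $\sum_j \sqrt{N(M\epsilon_n,\Theta_{n,j},\rho_n)}\sqrt{\Pi(\Theta_{n,j})}\e^{-(KM^2-2)n\epsilon_n^2}$: one must show that the polynomial-in-$j$ covering growth is dominated by the exponential Gamma-tail decay of $\Pi(|Q|(\mathcal{E}_s \times \reals^d) > 2^{j-1})$. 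Assumption \cref{eq:31} on the condition number is precisely what keeps the matrix-space covering affordable, and balancing the approximation bias $\sigma_n^{\beta}$ against the resulting prior-mass exponent yields the claimed rate.
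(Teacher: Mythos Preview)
Your proposal has the right high-level architecture but contains a structural gap in the sieve that would make the argument fail. You require, for \emph{every} atom of $Q$, the spectral bounds $\lambda_1(A^{-1})\geq\underline s_n$, $\lambda_d(A^{-1})\leq\overline s_n$ and a uniform condition-number bound. But the symmetric Gamma measure has infinite L\'evy activity: almost surely $Q$ has infinitely many atoms, with $A_i$'s i.i.d.\ from $F_A$, so any nontrivial spectral constraint imposed on all of them has prior probability zero. For the same reason, ``putting nets on the atomic parameters'' to get a $\rho_n$-net on a slice cannot work as stated, since there are infinitely many parameters to discretize. The paper circumvents this by splitting $Q$ via the superposition theorem into large jumps ($|u_i|>n^{-1}$, a Poisson number with mean $\asymp\log n$) and small jumps; only the finitely many large atoms carry spectral constraints and are covered by parameter nets, while the small jumps are controlled \emph{only} through their total variation $\sum_i|u_i|\Ind\{|u_i|\leq n^{-1}\}\leq M\epsilon_n$, which contributes a uniform $O(\epsilon_n)$ error.

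Your slicing is also misplaced. Slicing on $|Q|$ gives you a Gamma tail in $j$, but the expensive part of the covering is the \emph{eigenvector} net for each matrix $A_i$: covering the orthogonal group at precision $\delta$ costs $\delta^{-d(d-1)/2}$, and the precision needed scales with the condition number $\lambda_1(A_i)/\lambda_d(A_i)$ (otherwise $\|I-A_i^{-1}\widehat A_i\|$ is not small). The paper therefore slices per large atom on dyadic condition-number shells, with multi-index $j\in\{1,2,\dots\}^{H_n}$, and pairs the resulting $n^{d(d-1)2^{j_i-1}}$ covering cost against the tail $F_A(\lambda_1/\lambda_d>n^{2^{j_i-1}})\leq b_6 n^{-\kappa^{*}2^{j_i-1}}$ from \cref{eq:31}; this is where the hypothesis $\kappa^{*}>d(d-1)$ is spent. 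Slicing on total variation does not see this tradeoff and leaves the matrix covering unbounded.

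A smaller but real issue: you invoke \cref{thm:2} to ``condition on the urn'' for the prior-mass lower bound, but \cref{thm:2} is only a weak-limit approximation of $Q$, not an exact representation, so it does not deliver lower bounds on $\Pi_{*}$-probabilities. The paper instead works directly with the L\'evy structure: it partitions $\mathcal E_s\times[-2S,2S]^d$ into $N+1$ cells $V_j$, uses independence of $Q(V_j)$ across cells, and lower-bounds each $\Pr(|Q(V_j)-\alpha_{h,j}|\leq h^{\beta}/N)$ via the explicit $\mathrm{SGa}$ small-ball estimate of \cref{pro:5}. Your approximation step (Taylor expansion, discretization, role of $\zeta$) is essentially correct and matches \cref{pro:6,pro:13}.
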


\Cref{thm:3} gives a rate of contraction analogous to the rates found in
\citet{Canale2013}, that is to say, suboptimal with respect to the frequentist
minimax rate of convergence. Indeed, if one use an Inverse-Wishart distribution
for $F_A$, then $\kappa=2$; we can achieve $\kappa =1$ with a distribution
supported on diagonal matrices which assign square of inverse gamma random
variables to non-null element of the matrix. Obviously, the choice of $F_A$
matters since it has a direct influence on the rates of contraction of the
posterior. Also notice that the rates depends on $\kappa/2$, which is slightly
better than the $\kappa$ dependency found \citet{Canale2013}. The reason is
relatively artificial, since this follows from the fact that we put a prior on
dilation matrices of the mixture, whereas they set a prior on square of dilation
matrices (covariance matrices).

Location-modulation mixtures were never considered before, because they are not
satisfactory for estimating a density. In comparison with location-scale
mixtures, the major difference in proving contraction rates rely on
approximating sufficiently well the true regression function. We use a new
approximating scheme, based on standard of Fourier series analysis, yielding the
following theorem.
\begin{theorem}
  \label{thm:4}
  Suppose that $f_0 \in \mathcal{C}^{\beta}[-S,S]^d$ for some $S > 0$. Under the
  assumptions of \cref{sec:suppl-assumpt}, the \cref{eq:9} holds for the
  location-modulation prior with
  \begin{equation*}
    \epsilon_{n}^2 = n^{-2\beta/(2\beta +d)}(\log n)^{2\beta(2d+1)/(2\beta+d)}.
  \end{equation*}
\end{theorem}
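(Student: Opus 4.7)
The plan is to apply \cref{thm:1} to the location-modulation prior by verifying its three hypotheses with $\epsilon_n$ of the stated order. As hinted in \cref{sec:results}, the new ingredient is a Fourier-series approximation of $f_0$ by a finite location-modulation mixture that replaces the dilation scheme of location-scale priors, thereby removing the $\kappa/2$ loss and producing the minimax exponent $2\beta/(2\beta+d)$.

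\textbf{Approximation.} I would first extend $f_0$ to a $\mathcal{C}^{\beta}$ function on $[-2S,2S]^d$, truncate its Fourier series at cutoff $N\asymp \epsilon_n^{-1/\beta}$, and use standard smoothness estimates to obtain $\|f_0-P_N f_0\|_{\infty}\lesssim \epsilon_n$. Each retained Fourier atom $\cos(\xi_k\cdot x+\phi_k)$ is then expressed as a finite signed linear combination of kernels $K_{\xi_j,\phi_j}(\cdot-\mu_j)$ placed on a modest grid of $\mu_j$'s in $[-2S,2S]^d$, exploiting the Schwartz decay of $g$ and the assumption $g\geq C$ on $E$ to build a compatible partition of unity. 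This yields an approximant $f^{*}$ with $M\asymp N^d \asymp \epsilon_n^{-d/\beta}$ atoms satisfying $\|f_0-f^{*}\|_{\infty}\lesssim \epsilon_n$, and this step is where the modulation parameter $\xi$ replaces the dilation $A$ used in \cref{thm:3}.

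\textbf{Prior mass.} Using the series representation of \cref{thm:2}, I would condition on exactly $p=M$ particles, on the P\'olya urn values $(\xi_i,\mu_i,\phi_i)$ falling in small cells around the atoms of $f^{*}$, and on $\sqrt{T/p}\,J_i$ matching the target weights up to tolerance $\epsilon_n/M$. Combining \cref{eq:15}, \cref{eq:16}, the analogous local bound for $F_{\phi}$, and standard density estimates for $T$ and the $J_i$'s, the probability of this event is at least $\exp\{-cM(\log n)^{2d+1}\}$: the log-power $2d+1$ arises from the joint cost of localising in the $2d$ spatial/frequency coordinates (with the $a_{12}$-factor in \cref{eq:15} accumulating logs when the target frequencies are as large as $N\asymp \mathrm{poly}(n)$) plus the phase coordinate. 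Requiring this bound to dominate $\e^{-n\epsilon_n^2}$ with $M\asymp \epsilon_n^{-d/\beta}$ is precisely the defining equation for the claimed rate. Since the errors are Gaussian with fixed variance, the Kullback--Leibler ball reduces to $\{\|f-f_0\|_{\infty}\lesssim \epsilon_n,\ |\log\sigma/\sigma_0|\lesssim \epsilon_n\}$, and the $\sigma$-factor is handled by \cref{eq:8b}.

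\textbf{Sieve and entropy.} Adapting the L\'evy-base construction of \citet{Shen2013} to symmetric Gamma processes, I would take $\Theta_n$ to consist of pairs $(f,\sigma)$ with $\sigma\in[\e^{-c(n\epsilon_n^2)^{1/a_8}},n^{1/a_7}]$ and $f=f^{(Q)}$ for some $Q$ with at most $H_n\asymp n\epsilon_n^2/\log n$ atoms, frequencies confined to $|\xi|_d\leq R_n$ with $R_n\asymp (n\epsilon_n^2)^{1/(2\eta+2)}$ so that \cref{eq:11} controls the tail, and total variation polynomially bounded in $n$; combining the gamma tail of $T$, a concentration bound for the number of jumps inherited from \cref{thm:2}, and \cref{eq:11,eq:29,eq:27} then gives $\Pi(\Theta_n^c)\lesssim \e^{-3n\epsilon_n^2}$. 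I would slice $\Theta_n$ dyadically in $\|Q\|_{\mathrm{TV}}$ and in $\sigma$; on a slice, an $M\epsilon_n$-covering in $\rho_n$ is obtained by discretising atom locations at scale $\epsilon_n/(H_n\|Q\|_{\mathrm{TV}})$ and weights at scale $\epsilon_n/H_n$, so that $\log N\lesssim H_n\log(n/\epsilon_n)\lesssim n\epsilon_n^2$. The main obstacle is the careful calibration of this slicing: the polynomial tail \cref{eq:11} must exactly absorb the $d$-dimensional entropy produced by the frequency discretisation up to $R_n$, so that the slice-weighted geometric sum of \cref{thm:1} closes without introducing spurious logarithmic losses beyond the $(2d+1)$ exponent already dictated by the prior-mass step.
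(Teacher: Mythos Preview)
Your overall scaffold matches the paper's route through \cref{thm:1}, but two of your three building blocks would fail as stated.

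\textbf{Prior mass.} You propose to lower bound $\Pi(K_n(\theta_0,\epsilon_n))$ via the particle representation of \cref{thm:2}, conditioning on ``exactly $p=M$ particles''. But \cref{thm:2} is an \emph{algorithmic approximation}: the prior $\Pi$ is the law of the limiting symmetric Gamma measure $Q$, not of any $Q_p$, and $Q$ has infinitely many jumps almost surely. The paper instead partitions the parameter space into disjoint cells $U_i\times V_i\times W_i$ around the atoms of the approximant and lower-bounds $\Pi_*\{Q:|Q(V_j)-\alpha_j|\leq\delta\ \forall j\}$ directly via the small-ball estimate for $\mathrm{SGa}$ increments (\cref{pro:5}). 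For this step the paper's approximation (their Fej\'er-type construction in \cref{pro:9,pro:2}) delivers two properties you do not mention but which are essential: the total weight satisfies $\sum_i|\alpha_i|\lesssim 1$, and the atom count is $N\lesssim (m\log m)^d$ with $\epsilon_n\asymp(\log m/m)^{\beta}$. The $(\log n)^{2d+1}$ exponent comes from $N\cdot\log(1/F(V_j))\asymp m^d(\log m)^{d+1}$, not from ``localising in $2d+1$ coordinates''.

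\textbf{Sieve and slicing.} Your choice $R_n\asymp(n\epsilon_n^2)^{1/(2\eta+2)}$ gives, via \cref{eq:11}, only $F_{\xi}(|\xi|_d\geq R_n)\asymp (n\epsilon_n^2)^{-1}$, which is polynomially small and nowhere near the required $\Pi(\Theta_n^c)\lesssim e^{-3n\epsilon_n^2}$. The paper is forced to take $R_n=e^{2n\epsilon_n^2}$; but then a naive covering of frequencies in a ball of that radius blows up the entropy. This is exactly why the paper slices $\Theta_n$ \emph{per atom in frequency magnitude}, $\Theta_{n,j}=\{(f,\sigma)\in\Theta_n:\sqrt{n}(j_i-1)\leq|\xi_i|_d<\sqrt{n}j_i,\ i\leq H_n\}$, and shows $\log N(M\epsilon_n,\Theta_{n,j},\rho_n)\lesssim H n\epsilon_n^2+(d-1)\sum_i\log j_i$ while $\Pi(\Theta_{n,j})\leq\prod_i(1+\sqrt{n}(j_i-1))^{-2(\eta+1)}$; the condition $\eta>(d-1)/2$ is precisely what makes the $j$-sum converge. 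Your proposed slicing in $\|Q\|_{\mathrm{TV}}$ and $\sigma$ does not touch this frequency/entropy trade-off at all, so the summability condition of \cref{thm:1} cannot be verified from it.
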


Although it was not surprising that location-scale mixtures yield suboptimal
rates of convergence, we would have expected that location-modulation mixtures
could be suboptimal too, which is not the case (up to a power of $\log n$
factor). Moreover, location-modulation mixtures seem less stiff than location
mixtures \citep{Shen2013}, hence they might be interesting to consider in
regression.

Finally, it should be mentioned that all the rates here are adaptive with
respect to $\beta > 0$; that is location-scale and location-modulation mixtures
achieve these rates simultaneously for all $\beta > 0$.

\section{Proofs of \cref{sec:algorithm}}
\label{sec:proof-convergence-series}

\subsection{Preliminaries on convergence of signed random measures}
\label{sec:prel-conv-sign}

It is well known for random (non-negative) measures that it is enough to show
weak convergence of finite dimensional distributions on a semiring of bounded
sets generating $\mathcal{A}$ to prove vague convergence of the distribution,
see for instance \citet[Theorem~4.2]{Kallenberg1983} or
\citet[Theorem~11.1.VII]{Daley2007}. This fact remains true for random signed
measures, but not in a obvious way. Indeed, it is well known that the vague
topology is not metrizable on $\mathcal{M}(\spx)$, even if $\spx$ is Polish (for
example, see Remark~1.2 in \citet{DelBarrio2007}), making the vague topology
nasty to handle on $\mathcal{M}(\spx)$. In particular, it is not as direct as in
the case of non-negative measures to prove that the $\sigma$-algebra generated
by the sets
$\Set{ \Set{\mu \in \mathcal{M}(\spx) \given \mu(B)\in A} \given A \in \mathscr
  B(\reals),\ B \in \mathcal{R}}$,
where $\mathcal{R}$ is a ring of bounded sets generating $\mathcal{A}$,
coincides with the Borel $\sigma$-algebra of $\mathcal{M}(\spx)$, given the
topology of vague convergence. However, once this last fact is proved,
everything in the proof of \citet[Theorem~4.2]{Kallenberg1983} remains valid for
signed random measures.

\par Surprisingly, there is not so much literature on vague convergence of
signed random measures, and as our knowledge, the only reference available on
this subject is \citet{Jacob1995}. We state here the result of interest for us,
with only a sketch of the proof, as the details can be found in the original
article.

\begin{lemma}
  \label{lem:3}
  Let $\mathcal{R} \subset \mathcal{A}$ denote the ring of bounded Borel sets of
  $\spx$. Then the Borel $\sigma$-algebra of $\mathcal{M}(\spx)$ (given the
  weak-* topology) coincides with the $\sigma$-algebra generated by the sets
  $\Set{ \Set{\mu \in \mathcal{M} \given \mu(B)\in A} \given A \in \mathscr
    B(\reals),\ B \in \mathcal{R}}$
  and also
  $\Set{ \Set{\mu \in \mathcal{M} \given \mu(f)\in A} \given A \in \mathscr
    B(\reals),\ f \in \mathtt C_c(\spx)}$.
\end{lemma}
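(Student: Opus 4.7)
The plan is to establish the chain of inclusions among three $\sigma$-algebras: $\mathcal{B}_v$, the Borel $\sigma$-algebra of $(\mathcal{M}(\spx),\mathcal{T}_v)$; $\mathcal{B}_C$, the one generated by the evaluations $\mu \mapsto \mu(f)$ for $f\in \mathtt C_c(\spx)$; and $\mathcal{B}_R$, the one generated by the evaluations $\mu \mapsto \mu(B)$ for $B\in \mathcal{R}$. Specifically, I will prove $\mathcal{B}_R \subseteq \mathcal{B}_C \subseteq \mathcal{B}_v \subseteq \mathcal{B}_C$. The middle inclusion is immediate, since each $\mu \mapsto \mu(f)$ with $f\in \mathtt C_c(\spx)$ is by definition continuous with respect to $\mathcal{T}_v$.

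For $\mathcal{B}_R \subseteq \mathcal{B}_C$, I would first treat a relatively compact open set $U \subset \spx$. Combining Urysohn's lemma with the Hahn decomposition of an arbitrary $\mu$, one writes
\[
\mu^{\pm}(U) = \sup\bigl\{ \pm \mu(f) \mathrel{:} f\in \mathtt C_c(\spx),\ 0\leq f\leq \mathbbm{1}_U \bigr\}.
\]
Because $\spx$ is Polish (and the Riesz--Markov framework of the section forces $\spx$ to be locally compact), $\mathtt C_c(\spx)$ is separable in sup norm on any fixed compact, so the supremum can be restricted to a countable dense family, making $\mu\mapsto \mu^{\pm}(U)$ and hence $\mu \mapsto \mu(U) = \mu^+(U) - \mu^-(U)$ measurable with respect to $\mathcal{B}_C$. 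Fixing a relatively compact open set $V$, I would then run a $\pi$-$\lambda$ argument, taking complements inside $V$ to avoid infinities, to extend $\mathcal{B}_C$-measurability of $\mu\mapsto \mu(B)$ to every Borel $B\subset V$. Exhausting $\spx$ by an increasing sequence of such $V$'s concludes this step.

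The real work lies in $\mathcal{B}_v \subseteq \mathcal{B}_C$, and the main obstacle is that $\mathcal{T}_v$ is not metrizable on all of $\mathcal{M}(\spx)$, as noted in the excerpt. Following \citet{Jacob1995}, my plan is to write $\mathcal{M}(\spx) = \bigcup_n \mathcal{M}_n$, where $\mathcal{M}_n$ consists of signed measures whose total variation on the $n$-th term of a compact exhaustion of $\spx$ is bounded by $n$. Each $\mathcal{M}_n$ is vaguely relatively compact by Banach--Alaoglu, and metrizable because $\mathtt C_c(\spx)$ is separable; its induced topology is therefore generated by countably many evaluations $\mu\mapsto \mu(f_k)$, and its Borel $\sigma$-algebra is contained in $\mathcal{B}_C$. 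It then remains to verify that each $\mathcal{M}_n$ itself lies in $\mathcal{B}_C$, which follows from the first step of the argument applied to $C_c$ approximations of the indicators of the exhausting compacts, identifying $|\mu|$ of a compact as a $\mathcal{B}_C$-measurable function of $\mu$.

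The hard part is precisely this last decomposition step: the non-metrizability of $\mathcal{T}_v$ forces the countable exhaustion construction and the careful verification that its pieces lie in $\mathcal{B}_C$. As \cref{lem:3} is stated as a lemma whose details are referred to \citet{Jacob1995}, I would only sketch this and cite that paper for the technicalities.
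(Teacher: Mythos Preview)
Your argument is essentially sound, but for the hard inclusion $\mathcal{B}_v\subseteq\mathcal{B}_C$ you take a genuinely different route from the paper. You exhaust $\mathcal{M}(\spx)$ by norm-bounded slabs on which the weak-* topology is metrizable (via Banach--Alaoglu plus separability of $\mathtt C_c$), then piece the Borel structure together. The paper instead factors through the cone $\mathcal{M}^+$ of non-negative measures: it checks (i) that the trace $\mathcal{S}^+$ of $\mathcal{S}$ on $\mathcal{M}^+$ equals $\mathscr{B}(\mathcal{M}^+)$, which is the classical Kallenberg result, (ii) that the Jordan map $P:\mu\mapsto(\mu^+,\mu^-)$ is $(\mathcal{S},\mathcal{S}^+\times\mathcal{S}^+)$-measurable, and (iii) that subtraction $R:(\mu,\nu)\mapsto\mu-\nu$ is $(\mathscr{B}(\mathcal{M}^+)\times\mathscr{B}(\mathcal{M}^+),\mathscr{B}(\mathcal{M}))$-measurable. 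Since $R\circ P$ is the identity, this forces $\mathscr{B}(\mathcal{M})\subseteq\mathcal{S}$ directly. The paper's route buys a clean reduction to the known positive-measure theory and avoids metrizability issues altogether; your route is more self-contained but requires care about which topology $\mathtt C_c(\spx)$ carries---in particular your $\mathcal{M}_n=\{|\mu|(K_n)\leq n\}$ is not obviously operator-norm bounded, so Banach--Alaoglu does not apply as stated; taking $\mathcal{M}_n=\{\|\mu\|\leq n\}$ would be safer.

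One further omission: your chain $\mathcal{B}_R\subseteq\mathcal{B}_C\subseteq\mathcal{B}_v\subseteq\mathcal{B}_C$ yields $\mathcal{B}_C=\mathcal{B}_v$ but only $\mathcal{B}_R\subseteq\mathcal{B}_C$; the lemma claims all three coincide, so you also need $\mathcal{B}_C\subseteq\mathcal{B}_R$. This is routine (approximate $f\in\mathtt C_c$ uniformly by $\mathcal{R}$-simple functions and pass to the limit), but it should be stated.
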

\begin{proof}[Sketch of proof]
  First, we shall prove that
  $\mathcal{S} := \sigma\Set{ \Set{\mu \in \mathcal{M} \given \mu(B)\in A}
    \given A \in \mathscr B(\reals),\ B \in \mathcal{R}} = \sigma \Set{ \Set{\mu
      \in \mathcal{M} \given \mu(f)\in A} \given A \in \mathscr B(\reals),\ f
    \in \mathtt C_c(\spx)}$.
  Using the Hahn-Jordan decomposition of signed measures, this is a
  straightforward adaptation of \citet[Lemma~1.4]{Kallenberg1983}.

  Also, the argument of \citet[Lemma~4.1]{Kallenberg1983} for proving
  $\mathcal{S} \subset \mathscr{B}(\mathcal{M})$ remains valid here, but the
  converse inclusion is not as direct.
  Let $\mathcal{M}^+ \subset \mathcal{M}$ denote the cone of non-negative
  measures, and endow $\mathcal{M}^+$ with the topology $\mathcal{T}^+_{v}$ of
  vague convergence (\textit{i.e.} $\mu_n$ converges to $\mu$ if
  $\mu_n(f) \rightarrow \mu(f)$ for any $f\in \mathtt C_c^+$) and corresponding
  Borel $\sigma$-algebra $\mathscr B(\mathcal{M}^+)$. We denote $\mathcal{S}^+$
  the trace of $\mathcal{S}$ over $\mathcal{M}^+$.
  Hence, it suffices to prove that
  \begin{enumerate}
    \item\label{item:1} $\mathcal{S}^+ = \mathscr B(\mathcal{M}^+)$,
    \item\label{item:2}
    $P : (\mathcal{M},\mathcal{S}) \rightarrow (\mathcal{M}^+\times
    \mathcal{M}^+, \mathcal{S}^+\times \mathcal{S}^+)$,
    such that $P(\mu) := (\mu^+,\mu^-)$, is measurable,
    \item\label{item:3}
    $R : (\mathcal{M}^+\times \mathcal{M}^+, \mathscr{B}(\mathcal{M}^+)\times
    \mathscr{B}(\mathcal{M}^+)) \rightarrow (\mathcal{M},
    \mathscr{B}(\mathcal{M}))$,
    such that $R(\mu,\nu) := \mu - \nu$, is measurable.
  \end{enumerate}
  These 3 conditions imply that
  $R\circ P : (\mathcal{M},\mathcal{S}) \rightarrow
  (\mathcal{M},\mathscr{B}(\mathcal{M}))$
  is $\mathcal{S}/\mathscr{B}(\mathcal{M})$-measurable, and since $R\circ P$ is
  just the identity mapping, this implies
  $\mathscr{B}(\mathcal{M}) \subset \mathcal{S}$, as required.
\end{proof}

\subsection{Proofs}
\label{sec:proofs}

\begin{proof}[Proof of \cref{thm:2}]
  In the whole proof, we use the Pochhammer symbols $x^{(n)}$ and $(x)_{n}$ for
  respectively the $n$th power of the increasing factorial of $x$, and the $n$th
  power of the decreasing factorial of $x$. Once we took care of subtlety coming
  with \cref{sec:prel-conv-sign}, the rest of the proof is identical to the
  proof of Proposition~A.1 in \citet{Favaro2012}, which we resume here for the
  sake of completeness. According to \cref{sec:prel-conv-sign} it is enough to
  check that
  \begin{align}
    \label{eq:5b}
    (Q_p(A_1),\dots,Q_p(A_{k})) \overset{d}{\longrightarrow}
    (Q(A_1),\dots,Q(A_{k})),
  \end{align}
  for any collection of disjoints bounded measurable sets
  $A_1,\dots,A_{k} \in \mathcal{A}$, where $Q$ is a symmetric Gamma random
  measure with parameters $\alpha F(\cdot), \eta$. Oviously, for any vector
  $(v_1,\dots,v_{k}) \in \reals^{k}$ the random variable
  $v_1Q(A_1)+\dots+v_{k}Q(A_{k})$ has symmetric Gamma distribution, and hence is
  determined by its moments (because of \cref{pro:1}), by
  \citet[Theorem~30.2]{Billingsley2008} the \cref{eq:5b} holds if
  \begin{align}
    \label{eq:2c}
    \Esp\left[Q_p(A_1)^{r_1}\dots Q_p(A_{k})^{r_{k}}\right]
    &\longrightarrow
      \Esp\left[Q(A_1)^{r_1}\dots Q(A_{k})^{r_{k}}\right]
  \end{align}
  holds for any disjoints bounded measurable sets
  $A_1,\dots,A_{k}\in \mathcal{A}$ and any positive integers
  $r_1,\dots,r_{k}$. From now, for all collection of measurable sets
  $A_1,\dots,A_{k}\in \mathcal{A}$, we set
  $A^c := \spx \backslash \cup_{i=1}^{k}A_i$. We recall that if
  $\Set{X_i \in \spx \given i\leq 1 \leq p}$ is a P{\'o}lya urn sequence with
  base distribution $\alpha F(\cdot)$, and $A_1,\dots,A_{k} \in \mathcal{A}$ are
  disjoints, then
  \begin{multline*}
    P(\#\Set{i \given X_i \in A_1} = j_1,\dots,\#\Set{i \given X_i \in A_{k}}
    = j_k)\\
    = \binom{p}{j_1\dots j_k} \frac{(\alpha F(A_1))^{(j_1)}\dots(\alpha
      F(A_k))^{(j_k)} (\alpha F(A^c))^{(p - \sum_{i=1}^kj_i)}}
    {(p-\sum_{i=1}^{k}j_i)!\,\alpha^{(p)}},
  \end{multline*}
  where $(j_1,\dots,j_k) \in \mathcal{E}_{k,p}$, with
  $\mathcal E_{k,p} := \Set{(j_1,\dots, j_k) \in \Set{0,\dots,p}^k \given
    \sum_{i=1}^kj_i \leq p}$.  It is straightforward to show that both the lhs
  and the rhs of \cref{eq:2c} are null whenever one of the $r_i$'s is
  odd. Therefore we shall only consider \cref{eq:2c} for even exponents. We
  deduce from \cref{pro:1} that for any disjoints bounded measurable sets
  $A_1,\dots,A_k \in \mathcal{A}$ and any positive integers $r_1,\dots,r_k$,
  \begin{multline*}
    \Esp\left[Q_p(A_1)^{2r_1}\dots Q_p(A_k)^{2r_k}\right] =
    \alpha^{(r_1+\dots+r_k)}
    \left(\prod_{i=1}^k\frac{(2r_i)!/r_i!}{(\sqrt{\eta})^{2r_i}p^{r_i}}
    \right)\\
    \times \sum_{(j_1,\dots,j_k)\in \mathcal{E}_{k,p}} \binom{p}{j_1\dots j_k}
    \frac{(\alpha F(A_1))^{(j_1)}\dots (\alpha F(A_k))^{(j_k)}(\alpha
      F(A^c))^{(p-\sum_{i=1}^kj_i)}}
    {(p-\sum_{i=1}^{k}j_i)!\, \alpha^{(p)}}\\
    \times (j_1)^{(r_1)}\dots (j_k)^{(r_k)}.
  \end{multline*}
  Introducing $s(\cdot,\cdot)$ and $S(\cdot,\cdot)$ are the Stirling numbers of
  the first and second kind, we can mimic \citet[Appendix~A.1]{Favaro2012} to
  find that
  \begin{multline*}
    \Esp\left[Q_p(A_1)^{2r_1}\dots Q_p(A_k)^{2r_k}\right] =
    \alpha^{(r_1+\dots+r_k)}
    \left(\prod_{i=1}^k\frac{(2r_i)!/r_i!}{(\sqrt{\eta})^{2r_i}p^{r_i}}
    \right)\\
    \times \sum_{m_1=0}^{r_1}|s(r_1,m_1)|\sum_{s_1=0}^{m_1}S(m_1,s_1)\dots
    \sum_{m_k=0}^{r_k}|s(r_k,m_k)|\sum_{s_k=0}^{m_k}S(m_k,s_k)\\
    \times \frac
    {(\alpha F(A_1))^{(s_1)}\dots (\alpha F(A_k))^{s_k}}
    {\alpha^{(s_1+\dots+s_k)}}
    (p)_{s_1+\dots+s_k}.
  \end{multline*}
  Therefore, we conclude that,
  \begin{align*}
    \lim_{p\rightarrow\infty} \Esp\left[Q_p(A_1)^{2r_1}\dots
    Q_p(A_k)^{2r_k}\right]
    &=
      \prod_{i=1}^k\left(\frac{(2r_i)!}{r_i!}
      \frac{(\alpha F(A_i))^{(r_i)}}{(\sqrt{\eta})^{2r_i}} \right).
      \qedhere
  \end{align*}
\end{proof}

\begin{proof}[Proof of~\cref{pro:3}]
  We can assume that all the $Q_p$ and $Q$ are defined on the same probability
  space $(\Omega,\mathcal{F},\mathbb P)$. The proof is an adaption of
  \citet[Theorem~2]{Favaro2012}. We just have to take care that here, we proved
  $Q_p \rightarrow Q$ vaguely in \cref{thm:2}, which does not necessarily imply
  that $f^{(Q_p)}(x) \rightarrow f^{(Q)}(x)$ pointwise. But by assumption,
  $x\mapsto K(x;y)$ is continuous and vanishes outside a compact set, and it is
  easily seen that the sequence of total mass $|Q|(\cdot;\omega)$ is
  almost-surely bounded, then by \citet[Theorem~30.6]{Bauer2001} (which remains
  valid for signed measures), we have $f^{(Q_p)}(x)\rightarrow f^{(Q)}(x)$
  pointwise, almost-surely. The end of the proof is identical to
  \citet[Theorem~2]{Favaro2012} for convergence in $L^1$, and extension to $L^q$
  with $1\leq q < +\infty$ is straightforward.
\end{proof}

\section{Proofs of \cref{sec:general-result}}
\label{sec:proofs-crefs-result}

\begin{proof}[Proof of \cref{thm:1}]
  The proof is similar to \citet[theorem~5]{GhosalVanDerVaart2007}. The
  event $A_n$ that
  $\int \prod_{i=1}^n\frac{dP_{\theta,i}}{dP_{\theta_0,i}}(Y_i)\, d\Pi(\theta)
  \geq \e^{-2n\epsilon_n^2/2}$ satisfies $P_{\theta_0}^n(A_n^c) \rightarrow 0$
  by Lemma~10 in \citet{Ghosal2007} and assumptions on $\Pi$. Therefore,
  \begin{align*}
    P_{\theta_0}^n\Pi(\Theta_n^c|Y_1,\dots,Y_n)
    &\leq P_{\theta_0}^n[\Pi(\Theta_n^c|Y_1,\dots,Y_n)\Ind_{A_n}] +
      P_{\theta_0}^n(A_n^c)\\
    &\leq e^{2n\epsilon_n^2} P_{\theta_0}^n\int_{\Theta_n^c}
      \prod_{i=1}^n\frac{dP_{\theta,i}}{dP_{\theta_0,i}}(Y_i)\,d\Pi(\theta) +
      P_{\theta_0}^n(A_n^c)\\
    &\leq e^{2n\epsilon_n^2}\Pi(\Theta_n^c) + P_{\theta_0}^n(A_n^c)
      \rightarrow 0,
  \end{align*}
  where the last lines follows by Fubini's theorem. For $0 < \alpha_j \leq 1$,
  and $n$ large enough, the \cref{lem:5} states the existence of tests functions
  $\psi_{n,j}$ such that
  \begin{gather*}
    P_{\theta_0}^n\psi_{n,j} \leq
    2\alpha_jN(M\epsilon_n,\Theta_{n,j},\rho_n)\e^{-KM^2 n\epsilon_n^2}, \qquad
    P_{\theta}^n(1- \psi_{n,j}) \leq \alpha_j^{-1}\e^{-KM^2n\epsilon_n^2},
  \end{gather*}
  for all $\theta \in \Theta_{n,j}$ with $\rho_n(\theta,\theta_0) >
  11 M\epsilon_n$. Letting
  $U_{\epsilon} := \Set{\theta \in \Theta \given \rho_n(\theta_0,\theta) >
    11M\epsilon_n}$,
  \begin{multline*}
    P_{\theta_0}^n[\Pi(U_{\epsilon}\cap\Theta_{n,j}|Y_1,\dots,Y_n)\Ind_{A_n}]\\
    \begin{aligned}
      &\leq P_{\theta_0}^n\psi_{n,j} + P_{\theta_0}^n\left((1-\psi_{n,j})
        \int_{U_{\epsilon}\cap \Theta_{n,j}} \textstyle
        \prod_{i=1}^n\frac{dP_{\theta,i}}{dP_{\theta_0,i}}(Y_i)\,d\Pi(\theta)
      \right) \e^{2K_1 n\epsilon_n^2}\\
      &\leq P_{\theta_0}^n\psi_{n,j} + \sup_{U_{\epsilon}\cap \Theta_{n,j}}
      P_{\theta}^n(1 - \psi_{n,j})
      \Pi(\Theta_{n,j}) \e^{2K_1n\epsilon_n^2}\\
      &\leq 2\alpha_jN(M\epsilon_n,\Theta_{n,j},\rho_n) \e^{-KM^2n\epsilon_n^2}
      + \alpha_j^{-1}\Pi(\Theta_{n,j})\e^{-(KM^2-2)n\epsilon_n^2/2},
    \end{aligned}
  \end{multline*}
  where we used Fubini's theorem again. Put
  $\alpha_j = \sqrt{\Pi(\Theta_{n,j})/N(M\epsilon_n,\Theta_{n,j},\rho_n})$
  (notice that $\alpha_j \leq 1$) and sum over $j$ to obtain the result in view
  of the last equation.
\end{proof}

\subsection{Existence of tests}
\label{sec:existence-tests-2}

Here we construct the test functions required in the proof of \cref{thm:1}. We
proceed in two steps. First, we construct tests for testing the hypothesis that
$\theta = \theta_0$ against $\theta$ belongs to a ball of radius $\epsilon/12$
centered at $\theta_1$ with $\rho_n(\theta_0,\theta_1) > \epsilon$ ; then in
\cref{lem:5} we construct the tests used in the proof of \cref{thm:1}.

Let $\theta_0 = (f_0,\sigma_0)$, $\theta_1 = (f_1,\sigma_1)$,
$\theta_{10} = (f_1,\sigma_0)$,
$\delta = \sqrt{\epsilon^2 + (108/n)\log(1/\alpha)}$, and define,
\begin{gather*}
  A_n := \Set*{y \in \reals^n \given \sum_{i=1}^n\log
    \frac{dP_{\theta_{0},i}}{dP_{\theta_{10},i}}(y_i) <
    -\frac{n\epsilon^2}{96\sigma_0^2} + 2 \log \alpha },\\
  B_n^c := \Set*{y \in \reals^n \given n(1-\delta/3) \leq
    \sum_{i=1}^n\left(\frac{ y_i - f_0(x_i)}{\sigma_0}\right)^2 \leq n(1+
    \delta/3)}.
\end{gather*}
Then we construct the sequence $(\phi_n)_{n\geq 0}$ as
\begin{multline*}
  \phi_n(Y_1,\dots,Y_n)\\
  := \Ind_{A_n}(Y_1,\dots,Y_n) + \Ind_{B_n}(Y_1,\dots,Y_n) -
  \Ind_{A_n}(Y_1,\dots,Y_n) \Ind_{B_n}(Y_1,\dots,Y_n).
\end{multline*}

\begin{proposition}
  \label{pro:10}
  Let $K = 3(32 \vee 4\sigma_0^2)^{-1}$. The tests $\phi_n$ defined above
  satisfy $P_{\theta_0}^n\phi_n \leq \e^{-Kn\epsilon^2 / 144}$ and
  $\sup_{\theta\in\Theta\, : \, \rho_n(\theta,\theta_1) <
    \epsilon/12}P_{\theta}^n(1 - \phi_n) \leq \e^{-K n\epsilon^2 / 144}$ for all
  $\theta_1 \in \Theta$ with $\rho_n(\theta_0,\theta_1) > \epsilon$ and all
  $0 < \epsilon \leq 1$.
\end{proposition}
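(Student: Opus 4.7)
The test $\phi_n = \Ind_{A_n \cup B_n}$ combines a likelihood-ratio test for the mean function (via $A_n$, comparing $\theta_0$ with the auxiliary $\theta_{10} = (f_1, \sigma_0)$) and a chi-square test for the variance (via $B_n^c$, the acceptance region for the residuals with respect to $f_0$ to have variance $\sigma_0^2$). My plan is to bound the Type~I and Type~II errors by handling $A_n$ and $B_n$ separately; the harder part is a joint analysis for Type~II.

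\emph{Type~I.} Split $P_{\theta_0}^n(\phi_n) \le P_{\theta_0}^n(A_n) + P_{\theta_0}^n(B_n)$. For $A_n$, apply Markov's inequality to $e^{-Z/2}$ with $Z := \sum_{i=1}^n \log(dP_{\theta_0,i}/dP_{\theta_{10},i})(Y_i)$: under $P_{\theta_0}^n$ the expectation $\Esp[e^{-Z/2}]$ is the Hellinger affinity between two product Gaussians with common variance $\sigma_0^2$, which evaluates explicitly to $\exp(-n\|f_0 - f_1\|_{2,n}^2/(8\sigma_0^2)) \le 1$. This yields $P_{\theta_0}^n(A_n) \le \alpha \exp(-n\epsilon^2/(192\sigma_0^2))$. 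For $B_n$, the statistic $\sum_i((Y_i - f_0(x_i))/\sigma_0)^2$ has law $\chi_n^2$ under $P_{\theta_0}^n$, and the standard tail $P(|\chi_n^2 - n| \ge nt) \le 2e^{-nt^2/8}$ with $t = \delta/3$ combined with $\delta^2 = \epsilon^2 + 108 n^{-1}\log(1/\alpha)$ gives $P_{\theta_0}^n(B_n) \le 2\alpha^{3/2}\exp(-n\epsilon^2/72)$. Using $\alpha \le 1$ and the choice $K = 3(32 \vee 4\sigma_0^2)^{-1}$, calibrated so that both $1/(192\sigma_0^2)$ and $1/72$ dominate $K/144$, delivers the Type~I bound.

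\emph{Type~II.} Let $\theta = (f,\sigma)$ with $\rho_n(\theta, \theta_1) < \epsilon/12$. Since $\rho_n(\theta_0, \theta_1) > \epsilon$, at least one of (a) $\|f_0 - f_1\|_{2,n} > \epsilon/2$ or (b) $|\log(\sigma_0/\sigma_1)| > \epsilon/2$ holds. In case (a), $Z$ is Gaussian under $P_\theta^n$ (a linear functional of $Y$) with mean $\Esp_\theta[Z] = -\tfrac{n}{2\sigma_0^2}\|f_0 - f_1\|_{2,n}^2 + \tfrac{n}{\sigma_0^2}\langle f_0 - f_1, f - f_1\rangle_{2,n}$ and variance $n\sigma^2\|f_0 - f_1\|_{2,n}^2/\sigma_0^4$. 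Cauchy--Schwarz with $\|f - f_1\|_{2,n} < \epsilon/12$ and $\|f_0 - f_1\|_{2,n} > \epsilon/2$ drives $\Esp_\theta[Z]$ substantially more negative than the threshold defining $A_n$, so a Gaussian tail estimate controls $P_\theta^n(A_n^c)$. In case (b), $|\log(\sigma_0/\sigma)| > 5\epsilon/12$, and $W := \sum_i((Y_i - f_0(x_i))/\sigma_0)^2$ is $(\sigma/\sigma_0)^2$ times a noncentral $\chi_n^2$ with noncentrality $n\|f - f_0\|_{2,n}^2/\sigma^2$; its mean $n[(\sigma/\sigma_0)^2 + \|f - f_0\|_{2,n}^2/\sigma_0^2]$ is separated from $[n(1 - \delta/3), n(1 + \delta/3)]$ by $\gtrsim n\epsilon$, so noncentral chi-square concentration controls $P_\theta^n(B_n^c)$.

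The delicate sub-case is (b) with a precise cancellation, where $(\sigma/\sigma_0)^2 - 1 + \|f - f_0\|_{2,n}^2/\sigma_0^2 \approx 0$ despite $\sigma/\sigma_0$ being bounded away from $1$: there I would note that this identity combined with $|\log(\sigma_0/\sigma)| \gtrsim \epsilon$ forces $\|f - f_0\|_{2,n} \gtrsim \epsilon$, hence $\|f_0 - f_1\|_{2,n} \gtrsim \epsilon$ by the reverse triangle inequality, reducing the analysis to case (a). The rest is bookkeeping of universal constants, in the spirit of the testing lemmas of \citet{GhosalVanDerVaart2007} adapted to fixed-design Gaussian regression.
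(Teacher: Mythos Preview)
Your approach is essentially the paper's: split Type~I as $P_{\theta_0}^n(A_n)+P_{\theta_0}^n(B_n)$, and for Type~II use $P_\theta^n(1-\phi_n)\le P_\theta^n(A_n^c)\wedge P_\theta^n(B_n^c)$, handling the ``mean'' and ``variance'' alternatives separately. Where the paper cites \citet{Birge2006} and \citet{Choi2007}, you carry out the Hellinger-affinity and chi-square computations by hand, which is equivalent.

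One structural point to tighten. You split Type~II into non-exclusive cases (a) $\|f_0-f_1\|_{2,n}>\epsilon/2$ and (b) $|\log(\sigma_0/\sigma_1)|>\epsilon/2$, whereas the paper splits \emph{exclusively} on $|\log(\sigma_0/\sigma_1)|\le\epsilon/2$ versus $>\epsilon/2$. The exclusive split has a hidden payoff: in the mean case it forces $|\log(\sigma/\sigma_0)|\le 7\epsilon/12$, so $\sigma/\sigma_0$ is bounded. This matters for your Gaussian-tail argument on $Z$, since $\Var_\theta[Z]=n\sigma^2\|f_0-f_1\|_{2,n}^2/\sigma_0^4$ scales with $\sigma^2$; without a bound on $\sigma$ the tail does not give the required rate. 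In your framing you would need to note that if $\sigma$ is far from $\sigma_0$ then (b) already applies---which amounts to reconstructing the paper's exclusive split.

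Conversely, you notice something the paper glosses over. In case (b) with $\sigma<\sigma_0$ the paper invokes ``the same strategy'', but the stochastic-domination step $\Pr(W'\le x)\le\Pr(W\le x)$ only controls the \emph{lower} tail of the noncentral chi-square; for the relevant upper-tail event $\{W\ge n(1-\delta/3)\}$, a large noncentrality (large $\|f-f_0\|_{2,n}$) works against you. Your fix---when near-cancellation occurs, $\|f-f_0\|_{2,n}^2/\sigma_0^2\gtrsim 1-(\sigma/\sigma_0)^2\gtrsim\epsilon$, so $\|f_0-f_1\|_{2,n}$ is bounded below and the $A_n$-test takes over---is the right idea, and in this sub-case $\sigma<\sigma_0$ automatically controls the variance of $Z$. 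The remaining work is bookkeeping: the reduction yields $\|f-f_0\|_{2,n}\gtrsim\sigma_0\sqrt{\epsilon}$ rather than $\gtrsim\epsilon$, so comparing against $\|f-f_1\|_{2,n}<\epsilon/12$ requires some care with the $\sigma_0$-dependent constants (this is where the calibration $K=3(32\vee 4\sigma_0^2)^{-1}$ earns its keep).
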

\begin{proof}
  \par \textit{Type I error of $\phi_n$.} It is clear that
  $P_{\theta_0}^n\phi_n \leq P_{\theta_0}^n(A_n) + P_{\theta_0}^n(B_n)$.
  Moreover, by proposition~4 in \citet{Birge2006}, we have
  $P_{\theta_0}^n(A_n) \leq \alpha \e^{-n \epsilon^2/(192 \sigma_0^2)}$, and
  regarding the proof of lemma~7 in \citet{Choi2007}, the bound
  $P_{\theta_0}^n(B_n) \leq 2\e^{-n \delta^2/108} = 2\alpha
  \e^{-n\epsilon^2/108}$ holds for $n$ sufficiently large.

  \par \textit{Type II error of $\phi_n$.} Let $\theta = (f,\sigma)$ be such
  that $\rho_n(\theta,\theta_1) \leq \epsilon / 12$.  Clearly,
  $P_{\theta}^n(1 - \phi_n) = P_{\theta}^n(1 - \Ind_{A_n})(1 - \Ind_{B_n}) \leq
  P_{\theta}^n(A_n^c) \wedge P_{\theta}^n(B_n^c)$.
  We should consider two situations, either
  $|\log \sigma_0 - \log \sigma_1| \leq \epsilon / 2$, or
  $|\log \sigma_0 - \log \sigma_1| > \epsilon /2$.
  \begin{itemize}
    \item If $|\log \sigma_0 - \log \sigma_1| \leq \epsilon / 2$, then
    $\rho_n(\theta_0,\theta_1) > \epsilon$ implies
    $\|f_0 - f_1\|_{2,n} > \epsilon / 2$, and for all $\theta$ with
    $\rho_n(\theta,\theta_1) \leq \epsilon / 12$, it is clear that
    $\|f - f_1\|_{2,n} \leq \epsilon / 12$. It follows from proposition~4 in
    \citet{Birge2006} that
    \begin{align*}
      P_{\theta}^n(A_n^c)
      &\leq
        \exp\left[-\frac{n\|f_0 - f_1\|_{2,n}^2 - n\epsilon^2/8 +
        24\sigma_0^2\log \alpha}{24\sigma_0^2}
        \right]
        \leq
        \frac{1}{\alpha}
        \exp\left[-\frac{n\epsilon^2}{64\sigma_0^2} \right].
    \end{align*}
    \item If $|\log \sigma_0 - \log \sigma_1| > \epsilon / 2$, then
    $\rho_n(\theta,\theta_1) \leq \epsilon / 12$ implies
    $|\log \sigma - \log \sigma_0| > 5 \epsilon / 12$. We should again subdivise
    this case, considering either $\sigma/\sigma_0 \geq 1$ or not. For both
    cases we mimick and adapt the proof of lemma~7 in \citet{Choi2007}.
    \begin{itemize}
      \item If $\sigma/\sigma_0 \geq 1$, because
      $|\log \sigma - \log \sigma_0| > \epsilon / 3$ we have
      $\sigma > \sigma_0 \e^{\epsilon / 3}$, and thus
      $\sigma > (1 + \epsilon/3)\sigma_0$ for any $\epsilon > 0$. Let $W \sim
      \chi_n^2$ and let $W'$ have a noncentral $\chi^2$ distribution with $n$
      degrees of freedom and noncentrality parameter $\sum_{i=1}^n(f(x_i) -
      f_0(x_i))^2$. Then,
      \begin{align*}
        P_{\theta}^n(B_n^c)
        &\leq P_{\theta}^n\left(\sum_{i=1}^n\left(\frac{Y_i -
          f_0(x_i)}{\sigma_0}\right)^2 \leq n\left(1 + \frac{\delta}{3}\right)
          \right)\\
        &=\Pr\left( W' \leq n \frac{\sigma_0^2}{\sigma^2}\left(1 +
          \frac{\delta}{3} \right) \right)
          \leq \Pr\left( W \leq n \frac{\sigma_0^2}{\sigma^2}\left(1 +
          \frac{\delta}{3} \right) \right).
      \end{align*}
      But whenever $0 < \alpha \leq 2$, we have
      \begin{equation*}
        \frac{\sigma_0^2}{\sigma^2}\left(1 + \frac{\delta}{3}\right)
        \leq \frac{1 + \delta/3}{(1+\epsilon/3)^2}
        \leq \frac{1}{1 + \epsilon/3} + \frac{(108/n)\log(1/\alpha)}{6\epsilon(1
          + \epsilon/3)^2}.
      \end{equation*}
      Therefore, by Markov's inequality we get for all $t < 1/2$
      \begin{align*}
        P_{\theta}^n(B_n^c)
        &\leq
          \exp\left\{-t\frac{108
          \log(1/\alpha)}{6\epsilon(1+\epsilon/3)^2}\right\}
          \exp\left\{-\frac{nt}{1 + \epsilon/3}\right\}
          (1 - 2t)^{-n/2}.
      \end{align*}
      Choosing $t = -\epsilon/18$ leads to
      \begin{align*}
        P_{\theta}^n(B_n^c)
        &\leq \exp\left\{\frac{\log(1/\alpha)}{(1+\epsilon/3)^2}\right\}
          \exp\left\{\frac{n}{2}\left(\frac{\epsilon/9}{1+\epsilon/3} -
          \log(1+\epsilon/9) \right) \right\}\\
        &\leq \frac{1}{\alpha} \exp\left\{- \frac{7 n\epsilon^2}{648}\right\}
          \leq \frac{1}{\alpha} \exp\left\{- \frac{n\epsilon^2}{93}\right\},
      \end{align*}
      because we have $0 < \epsilon \leq 1$. This concludes the proof when
      $\sigma / \sigma_0 \geq 1$.
      \item On the other direction, $\sigma/\sigma_0 < 1$ and
      $|\log \sigma - \log \sigma_0| > 5\epsilon / 12$ imply that
      $\sigma < (1- \epsilon/3)\sigma_0$ for any $0 < \epsilon \leq 1$.  Using
      the same strategy as in the previous item it is possible to show that the
      bound $P_{\theta}^n(B_n^c) \leq
      (1/\alpha)\e^{-n\epsilon^2/1536}$ holds. \qedhere
    \end{itemize}
  \end{itemize}
\end{proof}

\begin{lemma}
  \label{lem:5}
  Let $\Theta_n \subset \Theta$ and $K := 3(32 \vee 4\sigma_0^2)^{-1}$. Then for
  any $0<\alpha \leq 1$ there exists a collection of tests functions
  $(\psi_n)_{n\geq 1}$ such that for any $0 < \epsilon \leq 1/12$ and any
  $n\geq 1$
  \begin{gather*}
    P_{\theta_0}^n\psi_n \leq 2\alpha N(\epsilon,\Theta_n,\rho_n) \e^{-K
      n\epsilon^2}, \qquad \sup_{\theta\in \Theta_n\,:\,\rho_n(\theta,\theta_0)
      > 11 \epsilon }P_{\theta}^n(1 - \psi_n) \leq \alpha^{-1} e^{- K n
      \epsilon^2}.
  \end{gather*}
\end{lemma}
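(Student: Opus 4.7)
The plan is to follow the standard covering and union bound construction: cover $\Theta_n$ by $\epsilon$-balls, use \cref{pro:10} to build a test against each ball center, and take the pointwise maximum.

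Fix $\epsilon \in (0,1/12]$ and let $\theta_{n,1},\dots,\theta_{n,N}$ be the centers of a minimal $\epsilon$-net of $\Theta_n$, with $N := N(\epsilon,\Theta_n,\rho_n)$. By the triangle inequality, any $\theta \in \Theta_n$ with $\rho_n(\theta,\theta_0) > 11\epsilon$ lies in the $\epsilon$-ball of some $\theta_{n,j}$, and this center necessarily satisfies $\rho_n(\theta_{n,j},\theta_0) > 10\epsilon$; I may therefore discard from the net every center failing this last inequality, without affecting the Type II control.

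For each surviving center $\theta_{n,j}$ I invoke \cref{pro:10} with its scale parameter tuned so that the proposition's alternative ball (of radius one twelfth of the scale) contains the $\epsilon$-ball of the net around $\theta_{n,j}$; the hypothesis $\epsilon \leq 1/12$ keeps the scale at most $1$, which is the restriction of \cref{pro:10}. I thereby obtain tests $\phi_{n,j}$ satisfying $P_{\theta_0}^n \phi_{n,j} \leq 2\alpha\,\e^{-Kn\epsilon^2}$ and $P_\theta^n(1 - \phi_{n,j}) \leq \alpha^{-1}\,\e^{-Kn\epsilon^2}$ uniformly in $\theta$ with $\rho_n(\theta,\theta_{n,j}) \leq \epsilon$. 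I then define $\psi_n := \max_j \phi_{n,j}$; the Type I bound is a union bound across the cover, and the Type II bound is immediate because any admissible alternative $\theta$ belongs to at least one cover ball.

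The main obstacle is arithmetic bookkeeping of constants rather than any substantive difficulty. After the triangle inequality the retained centers satisfy only $\rho_n(\theta_{n,j},\theta_0) > 10\epsilon$, whereas rescaling \cref{pro:10} to test against a ball of radius $\epsilon$ at the exponent $Kn\epsilon^2$ nominally requires separation larger than $12\epsilon$. This small gap can be absorbed either by slightly shrinking the cover radius below $\epsilon$ (at the cost of a change in the covering number which is dominated by an absolute constant and can be absorbed into the prefactor $2\alpha$), or, following the \citet{GhosalVanDerVaart2007} template, by peeling the alternative into concentric shells and applying \cref{pro:10} at a dyadic scale inside each shell, summing the resulting geometric series into the final constants.
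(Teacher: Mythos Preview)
Your approach is essentially the paper's: cover $\Theta_n$, discard centers too close to $\theta_0$, apply \cref{pro:10} at each remaining center, take the maximum, and control Type~I by a union bound. After the obvious rescaling the two constructions coincide.

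The arithmetic mismatch you flag --- centers guaranteed only at distance $>10\epsilon$ while \cref{pro:10} at scale $12\epsilon$ asks for $>12\epsilon$ --- is real, and the paper's own proof has exactly the same slip: it keeps centers $\zeta_j$ with $\rho_n(\theta_0,\zeta_j)>\epsilon$ but then claims Type~II control for all $\theta$ with $\rho_n(\theta,\theta_0)>11\epsilon/12$, which by the triangle inequality only forces the relevant center to distance $>10\epsilon/12$. Either replace $11$ by $13$ in the statement, or define $J$ with the lower threshold and invoke \cref{pro:10} at the slightly smaller scale; both are the trivial fixes you already suggest. No substantive difference between your argument and the paper's.
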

\begin{proof}
  Let $N \equiv N(\epsilon/12,\Theta_n,\rho_n)$ denote the number of balls of
  radius $\epsilon/12$ needed to cover $\Theta_n$. Let $(B_1,\dots,B_N)$ denote
  the corresponding covering and $(\zeta_1,\dots,\zeta_N)$ denote the centers of
  $(B_1,\dots,B_N)$. Now let $J$ be the index set of balls $B_j$ with
  $\rho_n(\theta_0,\zeta_j) > \epsilon$. Using \cref{pro:10} for
  $0<\epsilon\leq 1$ and for any ball $B_j$ with $j\in J$, we can build a test
  function $\phi_{n,j}$ satisfying
  \begin{gather*}
    P_{\theta_0}^n\phi_{n,j} \leq 2\alpha\e^{-Kn\epsilon^2 / 144}, \qquad
    \sup_{\theta \in B_j} P_{\theta}^n(1 - \phi_{n,j}) \leq \alpha^{-1}\e^{-K
      n \epsilon^2 / 144},
  \end{gather*}
  Let $\psi_n := \max_{j\in J} \phi_{n,j}$. Then
  $P_{\theta_0}^n\psi_n \leq \sum_{j\in J}P_{\theta_0}^n\phi_{n,j} \leq 2\alpha
  N(\epsilon/12,\Theta_n, \rho_n) \e^{- K n \epsilon^2 / 144}$ and also
  $P_{\theta}^n(1 - \psi_n) \leq \min_{j\in J}\sup_{\theta'\in
    B_j}P_{\theta'}^n(1 - \phi_{n,j}) \leq \alpha^{-1}\e^{-K n \epsilon^2 /
    144}$ for any $\theta \in \Theta_n$ with
  $\rho_n(\theta,\theta_0) > 11\epsilon/12$.
\end{proof}

\section{Proof of \cref{thm:3}}
\label{sec:proof-rates-loc-scale}

We prove \cref{thm:3} by verifying the set of sufficient conditions established
in \cref{thm:1}.

\subsection{Sieve construction}
\label{sec:sieve-constr-locat}

For constants $H,M > 0$ to be determined later, we define the sets
\begin{gather*}
  \mathcal{D}_n := \Set*{A \in \mathcal{E}_s \given n^{-1/a_2} \leq \lambda_i(A)
    \leq n^{-1/a_2}(1 + M\epsilon_n/n)^{n^2}, \quad i=1,\dots,d },\\
    \Theta_n
  := \Set*{ (f,\sigma) \given
  \begin{array}{l}
    n^{-2/a_8} < \sigma^{2} \leq n^{-2/a_8}(1+M\epsilon_n)^n,\ f(x) = \int K_A(x
    - \mu)Q(dAd\mu),\\
    Q = \sum_{i=1}^{\infty}u_i\delta_{A_i,\mu_i},\ \supp Q = \mathcal{E}_{s}
    \times [-2S,2S]^d,\ \sum_{i=1}^{\infty}|u_i|\leq n,\\
    \#\Set{i \given |u_i| > n^{-1},\ A_i \in \mathcal{D}_n} \leq
    Hn\epsilon_n^2 / \log n,\\
    \sum_{i=1}^{\infty}|u_i|\Ind\Set{A_i\notin \mathcal{D}_n} \leq
    M\epsilon_n,\quad \sum_{i=1}^{\infty}|u_i|\Ind\Set{|u_i| \leq n^{-1}} \leq
    M\epsilon_n
  \end{array}
}.
\end{gather*}

In the sequel, we assume without loss of generality that the jumps of $Q$ in the
definition of $\Theta_n$ are ordered so that there is no jump with
$|u_i| > n^{-1}$ and $A_i\in \mathcal{D}_n$ when $i>H n\epsilon_n^2 / \log
n$. Moreover, we consider the following partition of $\Theta_n$. Let $H_n$ the
largest integer smaller than $Hn\epsilon_n^2 / \log n$. Then for any
$j = (j_1,\dots,j_{H_n})\in \Set{1,2,\dots}^{H_n}$, inspired by
\citet[theorem~2]{Canale2013} we define the slices
\begin{equation*}
  \Theta_{n,j} :=
  \Set*{(f,\sigma) \in \Theta_n \given
    n^{2^{j_i-1}} < \lambda_1(A_i)/\lambda_{d}(A_i) \leq n^{2^{j_i}}\quad
    \forall i \leq H_n }.
\end{equation*}

\begin{lemma}
  \label{lem:9b}
  Assume that there is $0 < \gamma_1 < 1$ such that
  $\epsilon_n^2 \geq n^{-\gamma_1}$ for all $n$ large enough. Then for
  $H = 6(1 - \gamma_1)^{-1}$ it holds
  $\Pi(\Theta_n^c) \lesssim \exp(-3n\epsilon_n^2)$ as $n\rightarrow \infty$.
\end{lemma}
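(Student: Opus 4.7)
The plan is to apply a union bound to $\Pi(\Theta_n^c)$ across the defining conditions of $\Theta_n$; it then suffices to show that each individual failure probability is $\lesssim \exp(-3n\epsilon_n^2)$. I rely throughout on the L\'evy--It\^o representation of $Q$ as a signed Poisson integral on $\mathcal{E}_s \times [-2S,2S]^d \times \reals$ with intensity $\alpha F_A(dA)\,F_\mu(d\mu)\,|u|^{-1}e^{-\eta|u|}\,du$, equivalently $Q = Q^+ - Q^-$ with $Q^{\pm}$ independent Gamma random measures of parameters $(\alpha,\eta)$.

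The easy conditions go first. The support requirement on $Q$ is a.s.\ automatic from the choice of $F_\mu$. The $\sigma$-condition follows from \cref{eq:27,eq:29}: the former bounds $P^\sigma(\sigma^2 \leq n^{-2/a_8}) \lesssim \exp(-C_8 n)$ and the latter, together with $(1+M\epsilon_n)^{n/2} \geq \exp(Mn\epsilon_n/4)$ for $n$ large, bounds $P^\sigma(\sigma^2 > n^{-2/a_8}(1+M\epsilon_n)^n) \lesssim \exp(-a_7 Mn\epsilon_n/4)$; both dominate $\exp(-3n\epsilon_n^2)$ since $n\epsilon_n\to\infty$. The total mass condition uses that $|Q|(\mathcal{E}_s\times [-2S,2S]^d) = Q^+(\text{all}) + Q^-(\text{all}) \sim \Ga(2\alpha,\eta)$, so $P(\sum|u_i|>n) \lesssim e^{-\eta n/2}$.

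The key technical step is the bound on $N_n := \#\{i : |u_i|>n^{-1},\ A_i\in \mathcal{D}_n\}$. By L\'evy--It\^o, $N_n \sim \Po(\lambda_n)$ with
\begin{equation*}
\lambda_n \;\leq\; \alpha \int_{|u|>n^{-1}}\frac{e^{-\eta|u|}}{|u|}\,du \;\leq\; 2\alpha\log n + C,
\end{equation*}
for some absolute $C<\infty$. Setting $k_n := Hn\epsilon_n^2/\log n$, the classical Chernoff bound for Poisson tails gives $\log P(N_n > k_n) \leq -k_n\log(k_n/\lambda_n) + k_n$. The assumption $\epsilon_n^2\geq n^{-\gamma_1}$ yields $\log(k_n/\lambda_n) \geq (1-\gamma_1)\log n - 2\log\log n + O(1)$, hence $k_n\log(k_n/\lambda_n) - k_n \geq \tfrac{H(1-\gamma_1)}{2}n\epsilon_n^2 + o(n\epsilon_n^2)$, and the choice $H = 6(1-\gamma_1)^{-1}$ is precisely what makes this asymptotically at least $3n\epsilon_n^2$. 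This delicate balance between Poisson mean $\sim \log n$, threshold $\sim n\epsilon_n^2/\log n$, and required tail $\exp(-3n\epsilon_n^2)$ is the main obstacle and forces the stated value of $H$.

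Finally, the bad-matrix contribution $S' := \sum|u_i|\Ind\{A_i\notin \mathcal{D}_n\}$ and the small-jump contribution $S'' := \sum|u_i|\Ind\{|u_i|\leq n^{-1}\}$ are handled by Chernoff bounds on their Laplace functionals. The Frullani identity $\int(e^{\theta|u|}-1)|u|^{-1}e^{-\eta|u|}\,du = 2\log(\eta/(\eta-\theta))$ for $0<\theta<\eta$ yields $\Esp[e^{\theta S'}] \leq \exp(2\alpha F_A(\mathcal{D}_n^c)\log(\eta/(\eta-\theta)))$, uniformly bounded since $F_A(\mathcal{D}_n^c)\to 0$ by \cref{eq:18,eq:19}, so $\theta = \eta/2$ gives $P(S' > M\epsilon_n) \lesssim \exp(-\eta M\epsilon_n/2)$. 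For $S''$, the restriction $|u|\leq n^{-1}$ permits $\theta = n$ and a direct calculation gives $\Esp[e^{\theta S''}] \leq \exp(2\alpha(e-1))$, hence $P(S''>M\epsilon_n) \lesssim \exp(-nM\epsilon_n/2)$. Both bounds are $\ll \exp(-3n\epsilon_n^2)$ for $M$ sufficiently large since $n\epsilon_n \gg n\epsilon_n^2$, concluding the argument.
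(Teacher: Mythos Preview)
Your decomposition and the Poisson--Chernoff argument for the count $N_n$ match the paper's proof essentially line for line, and your treatment of the $\sigma$, total-mass, and small-jump ($S''$) terms is correct. There is, however, a genuine gap in your bound for the bad-matrix term $S' := \sum|u_i|\Ind\{A_i\notin \mathcal{D}_n\}$.

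Your Chernoff bound with $\theta=\eta/2$ gives $P(S'>M\epsilon_n)\lesssim \exp(-\eta M\epsilon_n/2)$, and you then assert this is $\ll \exp(-3n\epsilon_n^2)$. But $\epsilon_n\to 0$, so the exponent $\eta M\epsilon_n/2$ tends to zero and the bound tends to $1$, not to $0$; it certainly does not dominate $3n\epsilon_n^2\to\infty$. The justification ``since $n\epsilon_n\gg n\epsilon_n^2$'' is correct for the $S''$ term (where the exponent is $nM\epsilon_n$ because you took $\theta=n$) but is a non-sequitur for $S'$, whose exponent has no factor of $n$. No fixed choice of $\theta<\eta$ can repair this: the Gamma tail at level $M\epsilon_n$ is simply not small when the shape parameter is only bounded.

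The missing idea is that $S'\sim\Ga(2\alpha F_A(\mathcal D_n^c),\eta)$ has \emph{exponentially small shape parameter}, not merely bounded Laplace transform. The paper exploits this via Chebyshev (or equivalently Markov on the mean): $P(S'>M\epsilon_n)\lesssim F_A(\mathcal D_n^c)/\epsilon_n^2$, and then \cref{eq:18,eq:19} give $F_A(\mathcal D_n^c)\lesssim \exp(-C_2 n)+n^{a_3/a_2}\exp(-a_3 M n\epsilon_n)$, which is enough. You invoke \cref{eq:18,eq:19} only to say $F_A(\mathcal D_n^c)\to 0$; you must instead use their quantitative exponential decay and feed it through a moment inequality rather than a bounded-MGF Chernoff bound.
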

\begin{proof}
  From the definition of $\Theta_n$, it is clear that
  \begin{multline}
    \label{eq:7}
    \Pi(\sievec) \leq \Pi\left( \#\Set{i \given |u_i| > n^{-1}} > H
      n\epsilon_n^2/\log n \right)
    +\Pi\left( \textstyle\sum_{i=1}^{\infty}|u_i| > n \right)\\
    +\Pi\left( \textstyle\sum_{i=1}^{\infty}|u_i| \Ind\Set{|u_i| \leq
        n^{-1}} > M\epsilon_n \right) +\Pi\left(
      \textstyle\sum_{i=1}^{\infty}|u_i| \Ind\Set{(A_i,\mu_i)
        \notin \mathcal{D}_n} > M\epsilon_n \right)\\
    + P^{\sigma}( \sigma^2 \leq n^{-2/a_8}) + P^{\sigma}(\sigma^2 >
    n^{-2/a_8}(1+ M\epsilon_n)^n).
  \end{multline}
  The bounds on the two last terms are obvious in view of \cref{eq:29,eq:27}.

  \par By the superposition theorem \citep[section~2]{KINGMAN1992}, for any
  measurable set $A\subseteq \mathcal{E}\times \reals^d$ we have
  $Q(A) := Q_1(A) + Q_2(A)$ where $Q_1$ and $Q_2$ are independent signed random
  measures with total variation having Laplace transforms (for all measurable
  $A\subseteq \mathcal{E}\times \reals^d$ and all $t\in\reals$ for which the
  integrals in the expression converge)
  \begin{gather}
    \label{eq:1}
    \Esp \e^{t|Q_1|(A)} = \exp\left\{ 2\alpha F(A)\int_{n^{-1}}^{\infty}(\e^{tx}
      -1)x^{-1}\e^{-\eta x}dx\right\},\\
    \label{eq:22}
    \Esp \e^{t |Q_2|(A)} = \exp\left\{2\alpha F(A) \int_0^{n^{-1}}(\e^{tx} -
      1)x^{-1}\e^{-\eta x} dx \right\}.
  \end{gather}
  The random measures $Q_1$ and $Q_2$ are almost-surely purely atomic, the
  magnitudes of the jumps of $Q_1$ are all $\geq n^{-1}$, whereas $Q_2$ has
  jumps magnitudes all $< n^{-1}$ (almost-surely). Also, the number of jumps of
  $Q_1$ is distributed according to a Poisson law with intensity
  $\alpha E_1(n^{-1} / \eta)$, where $E_1$ is the exponential integral $E_1$
  function. Recalling that $E_1(x) \asymp \gamma + \log(1/x)$ for $x$ small, it
  follows
  $\alpha(\gamma + \log \eta) \leq \alpha E_1(n^{-1}/\eta) \leq 2\alpha \log n
  \ll H n\epsilon_n^2 / \log n$ when $n$ is large. Then using Chernoff's bound
  on Poisson law, we get
  \begin{multline*}
    \Pi\left(\#\Set{i \given |u_i| > n^{-1}} > Hn\epsilon_n^2/\log n\right)\\
    \begin{aligned}
      &\leq \e^{-\alpha E_1(n^{-1}/\eta)}\frac{(\e \alpha
        E_1(n^{-1}/\eta)^{Hn\epsilon_n^2/\log n}}{(Hn\epsilon_n^2/\log n)^{H
          n\epsilon_n^2 / \log n}}\\
      &\leq (\eta e^{\gamma})^{\alpha}\exp\left\{-\frac{Hn\epsilon_n^2}{\log n}
        \left( \log \frac{Hn\epsilon_n^2}{\log n} - \log(2e\alpha\log n)\right)
      \right\}.
    \end{aligned}
  \end{multline*}
  But,
  \begin{equation*}
    \log \frac{Hn\epsilon_n^2}{\log n} - \log(2e\alpha\log n)
    \geq (1 - \gamma_1)\log n - 2\log \log n + \log\frac{H}{2e\alpha},
  \end{equation*}
  which is in turn greater than $(1/2)(1-\gamma_1)\log n$ when $n$ becomes
  large. This gives the proof for the first term of the rhs of \cref{eq:7}.

  \par Regarding the second term of the rhs of \cref{eq:7}, it suffices to
  remark that the random variable $\sum_{i=1}^n|u_i|$ has Gamma distribution
  with parameters $(2\alpha,\eta)$. Then the upper bound on
  $\Pi(\sum_{i=1}^n|u_i| > n)$ follows from Markov's inequality. With the
  same argument, we have that the random variable
  $\sum_{i=1}^{\infty}|u_i|\Ind\Set{|u_i| \leq n^{-1}}$ is equal in distribution
  to $|Q_2|(\mathcal{E}\times\mathbb \reals^d)$, thus the bound for the fourth
  term of the rhs of \cref{eq:7} follows from Markov's inequality and
  \cref{eq:22}, because
  \begin{align*}
    \Pi\left( e^{3n\epsilon_n|Q_2|} > e^{3n\epsilon_n^2}\right)
    \leq \e^{-3n\epsilon_n^2}\exp\left\{2\alpha
    \int_{0}^{n^{-1}}(e^{n\epsilon_n x} - 1) x^{-1}\e^{-\eta x }dx \right\}
    \lesssim e^{-3n\epsilon_n^2}.
  \end{align*}

  \par The fifth term of the rhs of \cref{eq:7} is bounded using Chebychev's
  inequality. Indeed, with the same argument as before, the random variable
  $X:=\sum_{i=1}^n|u_i| \Ind\Set{A_i \notin \mathcal{D}_n}$ has Gamma
  distribution with parameters $(2\alpha F_A(\mathcal{D}_n^c),\eta)$. Hence for
  $n$ sufficiently large we have
  $\mathbb E X = 2\alpha F_A(\mathcal{D}_n^c)/\eta \leq \epsilon_n /2$, and
  \begin{align*}
    \Pi(X >\epsilon_n)
    \leq \Pi(X - \mathbb E X > \epsilon_n/2)
    \leq \frac{8\alpha F_A(\mathcal{D}_n^c)}{\eta^2\epsilon_n^2}.
  \end{align*}
  Then the result follows from \cref{eq:19,eq:18}.
\end{proof}

\begin{lemma}
  \label{lem:6}
  Let $\epsilon_n \rightarrow 0$ with $n\epsilon_n^2 \rightarrow \infty$ and
  $K = 3(32 \vee 4\sigma_0^2)$. Then there exists $M > 0$ such that it holds
  $\sum_{j}\sqrt{N(M\epsilon_n,\Theta_{n,j},\rho_n)} \sqrt{\Pi(\Theta_{n,j})}
  \e^{-(K M^2 - 2)n\epsilon_n^2} \rightarrow 0$.
\end{lemma}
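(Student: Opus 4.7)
The plan is to bound, uniformly in the multi-index $j = (j_1, \ldots, j_{H_n})$, the local entropy $\log N(M\epsilon_n, \Theta_{n,j}, \rho_n)$ and the local prior mass $\Pi(\Theta_{n,j})$, in such a way that the polynomial-in-$n$ decay of the prior mass in each coordinate $j_i$ granted by~\eqref{eq:31} dominates the growth of the entropy when summed over $j$.

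For the entropy, for any $(f,\sigma)\in\Theta_{n,j}$ I decompose $f = f^{\mathrm{main}} + f^{\mathrm{res}}$, where $f^{\mathrm{main}} := \sum_{i=1}^{H_n} u_i K_{A_i}(\cdot - \mu_i)$ collects the at most $H_n$ atoms satisfying $|u_i| > n^{-1}$ and $A_i \in \mathcal{D}_n$. From the sieve constraints and $\|K_A\|_\infty \leq \|g\|_\infty$ one gets $\|f^{\mathrm{res}}\|_{2,n} \leq 2M\|g\|_\infty \epsilon_n$, which is absorbed in the covering radius. For $f^{\mathrm{main}}$, I would discretize, for each $i\leq H_n$, the weight $u_i$ on a logarithmic grid, the location $\mu_i\in[-2S,2S]^d$ on a cubic grid (using Lipschitzness of $g$), and the matrix $A_i$ via its polar decomposition $A_i = U_i \Lambda_i U_i^\top$. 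A careful cover of $A_i$ on the slice, using that $O(d)$ has dimension $d(d-1)/2$ and that the slice restricts $\lambda_1(A_i)/\lambda_d(A_i) \leq n^{2^{j_i}}$, has log-cardinality at most $(d(d-1)/2)\cdot 2^{j_i}\log n + O(\log n)$. Combined with the $O(\log n)$ cost for $\sigma$, summing over $i$ gives
\begin{equation*}
\log N(M\epsilon_n, \Theta_{n,j}, \rho_n) \;\leq\; c_1 H_n \log(n/\epsilon_n) \;+\; \tfrac{d(d-1)}{2}\log n \sum_{i=1}^{H_n} 2^{j_i}.
\end{equation*}

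For the prior mass, I use the L\'evy--It\^o representation of $Q$: the atoms with $|u_i|>n^{-1}$ form a marked Poisson process whose marks, conditionally on their number, are i.i.d. with matrix part drawn from $F_A$ (and the sums defining the sieve constraints are otherwise slack). Thanks to this i.i.d. structure, \eqref{eq:31} applied coordinatewise gives
\begin{equation*}
\Pi(\Theta_{n,j}) \;\leq\; \prod_{i=1}^{H_n} F_A\!\left(\lambda_1(A)/\lambda_d(A) > n^{2^{j_i-1}}\right) \;\leq\; \prod_{i=1}^{H_n} b_6 \, n^{-\kappa^{*} 2^{j_i-1}}.
\end{equation*}
Taking the square root and using $2^{j_i} = 2\cdot 2^{j_i-1}$, each summand factorises in the coordinates of $j$,
\begin{equation*}
\sqrt{N(M\epsilon_n, \Theta_{n,j}, \rho_n)\,\Pi(\Theta_{n,j})} \;\leq\; e^{c_1 H_n\log(n/\epsilon_n)/2} \prod_{i=1}^{H_n} b_6^{1/2}\, n^{-(\kappa^{*} - d(d-1))\, 2^{j_i-1}/2}.
\end{equation*}
Summing over $j\in\{1,2,\dots\}^{H_n}$, the product factorises and each inner series $\sum_{k\geq 1} n^{-(\kappa^{*} - d(d-1))\, 2^{k-1}/2}$ is bounded by a constant uniformly in $n$ because $\kappa^{*} > d(d-1)$. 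Hence $\sum_j \sqrt{N\,\Pi} \lesssim e^{C H_n \log(n/\epsilon_n)} \lesssim e^{C' n\epsilon_n^2}$, using $H_n\lesssim n\epsilon_n^2/\log n$, and choosing $M$ so that $KM^2 - 2 > C' + 1$ concludes.

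The main obstacle is the entropy estimate, specifically matching the coefficient $d(d-1)/2$ in front of $2^{j_i}\log n$ with the decay exponent $\kappa^{*} > d(d-1)$. It requires carefully exploiting the polar decomposition to isolate the orthogonal factor $U_i \in O(d)$ as the only part whose discretization cost scales with the eigenvalue-ratio bound $n^{2^{j_i}}$, together with Lipschitz estimates for $g$ on bounded sets and the uniform lower bound $\lambda_d(A_i) \geq n^{-1/a_2}$ on $\mathcal{D}_n$, so that the $O(\delta)$-resolution net on $O(d)$ induces an $L^\infty$-resolution of order $\delta\cdot n^{2^{j_i}}$ on $K_{A_i}$. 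The rest of the proof consists of routine bookkeeping with constants.
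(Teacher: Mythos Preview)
Your proposal is correct and follows essentially the same route as the paper: the entropy bound you sketch (discretizing weights, locations, and matrices via their spectral/polar decomposition, with the orthogonal factor costing $(d(d-1)/2)\cdot 2^{j_i}\log n$) is exactly the content of the paper's Proposition~4b, and your prior-mass bound via the Poisson structure of the large jumps together with~\eqref{eq:31} is identical to the paper's. The combination, factorisation over coordinates of $j$, summation using $\kappa^{*} > d(d-1)$, and final choice of $M$ all match; the only cosmetic difference is that you write the baseline entropy as $c_1 H_n\log(n/\epsilon_n)$ whereas the paper writes $CHn\epsilon_n^2$, which are equivalent since $H_n \asymp n\epsilon_n^2/\log n$ and $\log(1/\epsilon_n)\lesssim\log n$.
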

\begin{proof}
  Define the random measures $Q_1$ and $Q_2$ as in the proof of
  \cref{lem:9b}. Then using the Poisson construction of $Q_1$ (see for instance
  \citet[section~2.3.1]{Wolpert2011}), it follows from \cref{eq:31} that for any
  $j \in \Set{1,2,\dots}^{H_n}$
  \begin{align*}
    \Pi(\Theta_{n,j})
    &\leq \textstyle \prod_{i\leq H_n} F_A(A\ :\ \lambda_1(A)/\lambda_d(A)\geq
      n^{2^{j_i-1}})
      \leq b_6^{H_n}\prod_{i\leq H_n} n^{-\kappa^{*}2^{j_i-1}}.
  \end{align*}
  Moreover, using \cref{pro:4b} we can find a constant $C> 0$ independent of $M$
  such that
  $N(M\epsilon_n,\Theta_{n,j},\rho_n) \leq \e^{-2CHn\epsilon_n^2}
  n^{d(d-1)/2\sum_{i\leq H_n}2^{j_i}}$ when $n$ is large. Therefore,
  \begin{equation*}
    \textstyle
    \sqrt{N(M\epsilon_n,\Theta_{n,j},\rho_n)}\sqrt{\Pi(\Theta_{n,j})}
    \leq
    \exp\left\{Hn\epsilon_n^2\left(C + \frac{\log b_6}{2\log n}\right) \right\}
    \prod_{i\leq H_n}
    n^{\frac 12[d(d-1) - \kappa^{*}]2^{j_i-1}}.
  \end{equation*}
  For $n$ large enough we have $\log b_6 \leq 2C\log n$ ; then provided
  $\kappa^{*} > d(d-1)$, we can sum over $j\in \Set{1,2,\dots}^{H_n}$ the last
  expression to get
  \begin{align*}
    \textstyle
    \sum_{j}\sqrt{N(M\epsilon_n,\Theta_{n,j},\rho_n)}\sqrt{\Pi(\Theta_{n,j})}
    &\leq
    \exp\left\{2C Hn\epsilon_n^2\right\}
    \left( \textstyle \sum_{k\geq 1} n^{\frac 12[d(d-1) - \kappa^{*}]2^{k-1}}
      \right)^{H_n}\\
    &\leq \exp\left\{H(2C + \kappa^{*}/2)n\epsilon_n^2 \right\}.
  \end{align*}
  Now choose $M > 0$ satisfying $KM^2 > 2 + H(2C + \kappa^{*}/2)$ to obtain
  the conclusion of the lemma.
\end{proof}

\begin{proposition}
  \label{pro:4b}
  For $n$ large enough there is a constant $C > 0$ independent of $M$ such that
  for any sequence $\epsilon_n \rightarrow 0$ with
  $n\epsilon_n^2 \rightarrow \infty$, the following holds for any
  $j \in \Set{1,2,\dots}^{H_n}$.
  \begin{equation*}
    \log N(M\epsilon_n,\Theta_{n,j},\rho_n)
    \leq CH n \epsilon_n^2 + \frac{d(d-1)}{2}\log n \sum_{i\leq H_n}2^{j_i}.
  \end{equation*}
\end{proposition}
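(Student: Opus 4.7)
The plan is to cover $\Theta_{n,j}$ in $\rho_n$ by explicitly discretizing a handful of dominant parameters and absorbing the series tails of $Q$ into the $M\epsilon_n$-radius of each ball. Any $\theta = (f,\sigma) \in \Theta_{n,j}$ decomposes as $f = f_{\mathrm{main}} + f_{\mathrm{tail}}$, where $f_{\mathrm{main}} = \sum_{i \leq H_n} u_i K_{A_i}(\cdot - \mu_i)$ collects the atoms with $|u_i| > n^{-1}$ and $A_i \in \mathcal{D}_n$, and $f_{\mathrm{tail}}$ gathers the small jumps together with the extreme scales. By the very constraints defining $\Theta_n$, the total variation of $f_{\mathrm{tail}}$ is at most $2M\epsilon_n$, so $\|f_{\mathrm{tail}}\|_\infty \lesssim M\epsilon_n\|g\|_\infty$ and a single representative for this piece suffices. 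Moreover $\log\sigma$ lies in an interval of length $\lesssim M n\epsilon_n$, so an $\epsilon_n$-net in $|\log\sigma - \log\sigma_0|$ uses $O(n)$ points and costs $O(\log n)$ bits.

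For each of the $H_n$ main atoms I would discretize $u_i \in [-n,n]$ and $\mu_i \in [-2S,2S]^d$ at precision $\asymp \epsilon_n/H_n$, exploiting the Lipschitz constant of $g$ and the bound $\|A_i^{-1}\| \leq n^{1/a_2}$ coming from $\mathcal{D}_n$; each contributes $O(\log n)$ bits. Writing $A_i = V_i^T D_i V_i$, the eigenvalues in $D_i$ are covered multiplicatively at relative scale $\asymp \epsilon_n/(n H_n n^{1/a_2})$, which suffices because a relative perturbation $\epsilon$ of the $d_k$'s induces a spectral perturbation of order $\epsilon\, n^{1/a_2}$ of $A_i^{-1} = V_i^T D_i^{-1} V_i$; since the multiplicative range of the $d_k$'s in $\mathcal{D}_n$ is $\lesssim e^{M n\epsilon_n}$, this costs $O(\log n)$ bits per eigenvalue. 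Summed over the $H_n$ main atoms, these cheap contributions total $O(H_n \log n) = O(Hn\epsilon_n^2)$, which is absorbed into the leading $CHn\epsilon_n^2$ term.

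The $\frac{d(d-1)}{2}\log n \sum_{i\leq H_n} 2^{j_i}$ term comes from the covering of the orthogonal factors $V_i \in O(d)$. The central Lipschitz identity is
\[
V^T D V - V'^T D V' = V^T\bigl(D - \lambda_d(D) I\bigr)V - V'^T\bigl(D - \lambda_d(D) I\bigr)V',
\]
so $\|V^T D V - V'^T D V'\| \leq 2(\lambda_1(D) - \lambda_d(D))\|V - V'\|$. Transferring this to $A_i^{-1}$ through $A_i^{-1} - A_i'^{-1} = A_i^{-1}(A_i' - A_i)A_i'^{-1}$ and invoking the slice constraint $\lambda_1(A_i) \leq n^{2^{j_i}}\lambda_d(A_i)$, the effective Lipschitz constant of $V_i \mapsto K_{A_i}$ picks up a factor of $n^{2^{j_i}}$. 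Consequently $V_i$ must be discretized at spectral scale $\lesssim \epsilon_n/(nH_n n^{2^{j_i}} n^{O(1)})$, and the standard volume bound $N(\delta, O(d), \|\cdot\|) \leq (C/\delta)^{d(d-1)/2}$ yields $\frac{d(d-1)}{2}(2^{j_i}\log n + O(\log n))$ bits per matrix. Summing in $i$ produces the advertised leading term, with the $O(\log n)$ remainders per matrix dissolving into $CHn\epsilon_n^2$.

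The main obstacle is the orthogonal-matrix Lipschitz analysis: the $n^{2^{j_i}}$ factor has to come genuinely from the slice rather than be swallowed by the tighter uniform $n^{1/a_2}$ bound one would get by working directly with the eigendecomposition of $A_i^{-1}$, and simultaneously the width $e^{Mn\epsilon_n}$ of $\mathcal{D}_n$ must not leak into the leading constant so that $C$ remains independent of $M$. The right route is to parameterize through $A_i$ rather than $A_i^{-1}$, using the slice's upper bound on $\lambda_1(A_i)$ in preference to the softer $\mathcal{D}_n$-bound $\lambda_1(A_i) \leq n^{-1/a_2}e^{Mn\epsilon_n}$.
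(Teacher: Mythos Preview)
Your strategy matches the paper's: approximate each $(f,\sigma)\in\Theta_{n,j}$ by discretizing the $H_n$ dominant atoms (weights, locations, eigenvalues, orthogonal frames), absorb the tail mass and the $\sigma$-interval into the $M\epsilon_n$-ball, then count. The paper discretizes the weight vector in $\ell_1$ at precision $M\epsilon_n$ (rather than each $u_i$ separately) and the locations at precision $M\epsilon_n\, n^{-(1+1/a_2)}$ so as to compensate for $\|A_i^{-1}\|\le n^{1/a_2}$; your stated precisions for $u_i,\mu_i$ are slightly off, but the resulting bit counts are the same $O(H_n\log n)$, so this is harmless.

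The one substantive difference is the orthogonal-matrix step. The paper works with the scale-invariant quantity $\|I-A^{-1}\widehat A\|$ directly: with $B=P^\top\widehat P-I$ one has $I-A^{-1}\widetilde A=P(B-\Lambda^{-1}B\Lambda)\widehat P^\top$, whose entries carry factors $\lambda_j/\lambda_i$, so $\|I-A^{-1}\widetilde A\|\le d\,\|B\|_{\max}\,\lambda_1(\Lambda)/\lambda_d(\Lambda)\le dM\epsilon_n/n$ once $\|P-\widehat P\|\le n^{-(2^{j_i}+1)}M\epsilon_n$; this feeds straight into Proposition~11. Your route via $\|A-A'\|\le 2(\lambda_1-\lambda_d)\|V-V'\|$ and then $A^{-1}-A'^{-1}=A^{-1}(A'-A)A'^{-1}$ is also valid (since $|x-\mu|$ is bounded on $[-S,S]^d\times[-2S,2S]^d$, the naive Lipschitz bound for $g$ suffices), but your final paragraph misidentifies the fix: the slice does \emph{not} bound $\lambda_1(A_i)$, only the ratio $\lambda_1/\lambda_d\le n^{2^{j_i}}$. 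The correct way to close your computation is
\[
\frac{\lambda_1-\lambda_d}{\lambda_d\,\lambda_d'}=\frac{\lambda_1/\lambda_d-1}{\lambda_d'}\le n^{2^{j_i}}\cdot n^{1/a_2},
\]
using only the slice constraint and the $\mathcal D_n$ lower bound $\lambda_d'\ge n^{-1/a_2}$; the width $e^{Mn\epsilon_n}$ of $\mathcal D_n$ never enters. The $M$-independence of $C$ is then obtained, as in the paper, from $H_n|\log M|\ll H_n\log n$ for large $n$.
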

\begin{proof}
  The proof is based on arguments from \citet{Shen2013}, it uses the fact that
  the covering number $N(M\epsilon_n,\sieve,\rho_n)$ is the minimal cardinality
  of an $M\epsilon_n$-net over $\sieve$ in the distance $\rho_n$. Let
  $\delta_n := M\epsilon_nn^{-(1+1/a_2)}$, $\widehat{R}_n$ be a $\delta_{n}$-net
  of $[-2S,2S]^d$, $\widehat{\Delta}_n$ be a $M\epsilon_n$-net of
  $\Set{(u_1,\dots,u_{H_n}) \in \reals^{H_n}\given \sum_{i=1}^{H_n}|u_i| \leq
    n}$ in the $\ell_1$-distance, and
  $\widehat{S}_n := \Set{\sigma > 0 \given \sigma^2 = n^{-2/a_8}(1 +
    M\epsilon_n)^k,\ k\in \mathbb N,\ k\leq n}$. Also, for any $k\geq 1$ let
  $\widehat{\mathcal{O}}_k$ be a $n^{-(2^{k}+1)}M\epsilon_n$-net of the group of
  $d\times d$ orthogonal matrices equipped with spectral norm $\|\cdot\|$, and
  define
  \begin{gather*}
    \widehat{\mathcal{D}}_{n,k} := \Set*{A\in \mathcal{D}_n \given
      \begin{array}{l}
        A = P \Lambda P^{\top},\ P\in
        \widehat{\mathcal{O}}_k,\ \Lambda =
        \mathrm{diag}(\lambda_1,\dots,\lambda_d),\\
        \lambda_j = n^{-1/a_2}(1 + M\epsilon_n/n)^k,\ k\in \mathbb
        N,\ k\leq n^2,\ j=1,\dots,d
      \end{array}
    }.
  \end{gather*}
  Pick $(f,\sigma) \in \Theta_{n,j}$ with
  $f(x) = \sum_{i=1}^{\infty}u_i\, K_{A_i}(x - \mu_i)$. Clearly we can find
  $\widehat{u} \in \widehat{\Delta}$ such that
  $\sum_{i\leq H_n}|u_i - \widehat{u}_i| \leq M\epsilon_n$,
  $\widehat{\mu} \in \widehat{R}_n^{H_n}$ such that
  $|\mu_i - \widehat{\mu}_i|_d \leq \delta_n$ for all $i=1,\dots, H_n$, and
  $\widehat{\sigma} \in \widehat{S}_n$ such that
  $|\log \sigma - \log \widehat{\sigma}| \leq M\epsilon_n$. We also claim that
  we can find $\widehat{A}_{i} \in \widehat{\mathcal{D}}_{n,j_i}$ such that
  $\|I - A^{-1}_i\widehat{A}_i^{-1}\| \leq 3dM\epsilon_n/n$ for all $i\leq
  H_n$. We defer the proof of the claim to later. Let
  $\widehat{f}(x) = \sum_{i\leq H_n}\widehat{u}_i\, K_{\widehat{A}_i}(x -
  \widehat{\mu}_{i})$ denote the function built from the parameters chosen as
  above ; it follows
  \begin{align*}
    \|f - \widehat{f}\|_{2,n}
    &\leq
      \sum_{i> H_n}|u_i|
      + \sum_{i\leq H_n}|u_i - \widehat{u}_i|
      +\sum_{i\leq H_n}|u_i| \|K_{A_i}(\cdot -\mu_i) -
      K_{\widehat{A}_i}(\cdot - \widehat{\mu}_i)\|_{2,n}\\
    &\leq
      2M\epsilon_n
      + C'\sum_{i\leq H_n}|u_i| \|I - A_i^{-1} \widehat{A}_i\|
      + C'\sum_{i\leq H_n} |u_i| \|A^{-1}_i\| |\mu_{i} - \widehat{\mu}_i|_d\\
    &\leq M(2 + C' + 3C'd)\epsilon_n,
  \end{align*}
  where the two last inequalities hold by \cref{pro:11} for a constant $C'> 0$
  depending only on $g$, and because $\|A_i^{-1}\| \leq n^{1/a_2}$ for all
  $i\leq H_n$. Thus a $(2+C'+3C'd)M\epsilon_n$-net of $\Theta_{n,j}$ in the
  distance $\rho_n$ can be constructed with $(\widehat{f},\widehat{\sigma})$ as
  above. Recall that $\# \widehat{R}_n \leq (4S/\delta_n)^d$,
  $\# \widehat{\Delta}_n \leq (n/(M\epsilon_n))^{H_n}$, $\# \widehat{S}_n = n$
  and $\# \widehat{\mathcal{O}}_k \leq
  (n^{-(2^k+1)}M\epsilon_n)^{-d(d-1)/2}$. It turns out that
  $\# \widehat{\mathcal{D}}_{n,k} \leq n^{2d}\times \#
  \widehat{\mathcal{O}}_k$. Then the total number of
  $(\widehat{f},\widehat{\sigma})$ is bounded by a multiple constant of
  \begin{align*}
    n\times \left(\frac{4S}{\delta_n}\right)^{H_n} \times
    \left(\frac{n}{M\epsilon_n}\right)^{H_n} \prod_{i\leq H_n} \left( n^{2d}
    \times \left( \frac{n^{2^{j_i}+1}}{M\epsilon_n}\right)^{d(d-1)/2}\right).
  \end{align*}
  Finally, $H_n |\log M| \ll H_n \log n$ when $n$ is large proving that the
  constant $C>0$ can be chosen independent of $M$, and the constant factor
  $2+C'+3C'd$ can be absorbed into the bound.

  \par It remains to prove that for any $A\in \mathcal{D}_{n}$ with
  $\lambda_1(A) / \lambda_d (A) \leq n^{2^k}$ we can find
  $\widehat{A} \in \widehat{\mathcal{D}}_{n,k}$ such that
  $\|I - A^{-1}\widehat{A}\| \leq 3d M\epsilon_n / n$. Let
  $A =: P \Lambda P^{\top}$ denote the spectral decomposition of $A$ (recall
  that $A$ is symmetric). Clearly, we can find a matrix
  $\widehat{A} := \widehat{P}\widehat{\Lambda}\widehat{P}^{\top}$ in
  $\widehat{\mathcal{D}}_{n,k}$ with
  $\|P - \widehat{P}\| \leq n^{-(2^k+1)}M\epsilon_n$ and
  $1 \leq \lambda_j(\Lambda)/\lambda_j(\widehat{\Lambda}) \leq 1 +
  M\epsilon_n/n$ for all $j=1,\dots,d$. Let
  $\widetilde{A} := \widehat{P} \Lambda \widehat{P}^{\top}$ and remark that
  \begin{align}
    \notag
    \|I - A^{-1}\widehat{A}\|
    &\leq \|I - A^{-1}\widetilde{A}\| + \|A^{-1}\widetilde{A}\|\|I -
      \widetilde{A}^{-1}\widehat{A}\|\\
    \label{eq:30}
    &\leq \|I - A^{-1}\widetilde{A}\| + \|I -
      \widetilde{A}^{-1}\widehat{A}\|(1 + \|I - A^{-1}\widetilde{A}\|).
  \end{align}
  Let $B := P^{\top}\widehat{P} - I$, so that
  $\|B\|_{\max} \leq \|B\| \leq \|P^{\top}\widehat{P} - I\| \leq \|P -
  \widehat{P}\| \leq n^{-(2^k+1)}M\epsilon_n$, and
  $I - A^{-1}\widetilde{A} = P(B - \Lambda^{-1}B\Lambda)\widehat{P}^{\top}$. It
  follows,
  \begin{align*}
    \|I - A^{-1}\widetilde{A}\|
    &\leq \|B - \Lambda^{-1}B\Lambda\|
      \leq d\|B\|_{\max} \frac{\lambda_1(\Lambda)}{\lambda_d(\Lambda)}
      \leq dM\epsilon_n/n,
  \end{align*}
  because the entries of $B - \Lambda^{-1}B\Lambda$ are equal to
  $B_{ij}(1 - \Lambda_j/\Lambda_i)$ and $\|\cdot\| \leq
  d\|\cdot\|_{\max}$. Moreover,
  $I - \widetilde{A}^{-1}\widehat{A} = \widehat{P}(I -
  \Lambda^{-1}\widehat{\Lambda})\widehat{P}^{\top}$ implies
  $\| I - \Lambda^{-1}\widehat{\Lambda}\| \leq dM\epsilon_n/n$. Then the
  conclusion follows from \cref{eq:30}.
\end{proof}

\subsection{Approximation of functions}
\label{sec:appr-smooth-funct}

In order to prove the prior positivity of Kullback-Leibler balls around
$\theta_0$, we need to approximate $f_0 \in \mathcal{C}^{\beta}[-S,S]^d$ by
finite location-scale mixtures of kernels. We mostly follow the approach of
\citet[lemma~3.4]{DeJongeVanZanten2010}.

Nevertheless, as mentioned in \citet{DeJongeVanZanten2010}, we shall extend
$f_0$ defined on $[-S,S]^d$ onto a (smooth) function defined on $\mathbb R^d$ to
be able to approximate properly $f_0$; otherwise we could have troubles at the
boundaries of $[-S,S]^d$. Clearly, without any precaution,
$h^{-d}K_{hI}*f_0(x) \rightarrow f_0(x)/2$ as $h\rightarrow 0$ when $x$ belongs
to the boundary of $[-S,S]^d$. \citet{DeJongeVanZanten2010} assume that the
covariates are spread onto $[a,b]^d$ with $a > -S$ and $b < S$ and extend $f_0$
by multiplying it by a smooth function that equal $1$ on $[a,b]^d$ and $0$
outside $[-S,S]^d$. Here we assume that the covariates are spread onto
$[-S,S]^d$ and we use Whitney's extension theorem \citep{Whitney1934} to find a
function $\widetilde{f}_0 : \mathbb R^d \rightarrow \mathbb R$ such that
$\widetilde{f}_0 \in \mathcal{C}^{\beta}(\mathbb R^d)$ and
$D^{\alpha}\widetilde{f}_0(x) = D^{\alpha}f_0(x)$ for all $x\in [-S,S]^d$ and
all $|\alpha| \leq \beta$. Then we apply the method of
\citet[lemma~3.4]{DeJongeVanZanten2010} to $\widetilde{f}_0$. We find this
approach more elegant since we do not have to assume that $f_0$ is defined on a
larger set than the support of the covariates.

For each $\alpha \in \mathbb N^d$, let
$\mathfrak{m}_{\alpha}^h := h^{-d}\int x^{\alpha}K_{hI}(x)\,dx$. For
$\alpha \in \mathbb N^d$ with $|\alpha| \geq 1$, define two sequences of numbers
by the following recursion. If $|\alpha| = 1$ set $c_{\alpha} = 0$ and
$d_{\alpha} = - 1 / \alpha!$, and for $|\alpha| \geq 2$
define
\begin{gather}
  \label{eq:25}
  c_{\alpha} := \sum_{\substack{l + k = \alpha\\ |l|\geq 1,\, |k|\geq
    1}}\frac{(-1)^{|\alpha|}}{\alpha!}\left(\frac{\mathfrak{m}_{l}^h
      \mathfrak{m}_k^{h}}{\mathfrak{m}_{\alpha}^h}\right)d_k,\quad
  d_{\alpha} := \frac{(-1)^{|\alpha|}}{\alpha!} - c_{\alpha}.
\end{gather}
Given $\beta > 0$, $h > 0$ and $p$ the largest integer strictly smaller than
$\beta$, define
\begin{equation*}
  f_{\beta} := \widetilde{f}_0 - \sum_{1 \leq |\alpha| \leq p}
  d_{\alpha}\mathfrak{m}_{\alpha}^h D^{\alpha}\widetilde{f}_0.
\end{equation*}
\begin{proposition}
  \label{pro:6}
  Let $h > 0$. For any $\beta > 0$ and any function
  $f_0 \in \mathcal{C}^{\beta}[-S,S]^d$ there is a positive constant $M_{\beta}$
  such that $|h^{-d}K_{hI}*f_{\beta}(x) - f_0(x)| \leq M_{\beta}h^{\beta}$ for
  all $x\in \mathbb [-S,S]^d$.
\end{proposition}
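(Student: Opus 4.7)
The plan is to reduce the bound to a Taylor expansion on $\mathbb{R}^d$ via the Whitney extension $\widetilde{f}_0$, and then to check that the coefficients $d_\alpha$ produced by the recursion~\eqref{eq:25} are precisely what is needed to cancel every polynomial Taylor term of order $h,\dots,h^p$ arising when $\widetilde{f}_0$ and its derivatives are convolved with $h^{-d}K_{hI}$, leaving only $f_0(x)$ and an $O(h^\beta)$ remainder.

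More concretely, since $\widetilde{f}_0$ together with all of its first $p$ derivatives coincide with those of $f_0$ on $[-S,S]^d$, it is enough to bound $|h^{-d}K_{hI}*f_\beta(x)-\widetilde{f}_0(x)|$ for $x\in[-S,S]^d$. After the substitution $u=(x-y)/h$ one has $h^{-d}K_{hI}*F(x)=\int g(u)F(x-hu)\,du$ for any $F$. I would Taylor expand $F(x-hu)$ to order $p-|\alpha|$ at $x$ (where $\alpha$ is the order of the derivative of $\widetilde{f}_0$ represented by $F$), using the $(\beta-p)$-Hölder continuity of the $p$th derivatives of $\widetilde{f}_0$ to bound the remainder by $C_{f_0}|hu|^\beta$ pointwise. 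Integrating against $g(u)$ and using the identity $\int u^\gamma g(u)\,du=h^{-|\gamma|}\mathfrak{m}_\gamma^h$ yields
\begin{equation*}
  h^{-d}K_{hI}*D^\alpha\widetilde{f}_0(x)=\sum_{|\gamma|\leq p-|\alpha|}\tfrac{(-1)^{|\gamma|}}{\gamma!}\mathfrak{m}_\gamma^h\,D^{\alpha+\gamma}\widetilde{f}_0(x)+O(h^{\beta-|\alpha|}),
\end{equation*}
where the exponential decay $|g(u)|\lesssim e^{-C_0|u|^\tau}$ from Section~\ref{sec:locat-scale-mixt-1} guarantees $\int|u|^\beta|g(u)|\,du<\infty$.

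Substituting these expansions into $h^{-d}K_{hI}*f_\beta$, each remainder $O(h^{\beta-|\alpha|})$ is paired with a factor $\mathfrak{m}_\alpha^h=O(h^{|\alpha|})$ and so contributes at most $O(h^\beta)$ overall. What remains is to regroup the polynomial part by the multi-index $\eta$ of the derivative $D^\eta\widetilde{f}_0(x)$ that appears. For $\eta=0$ the coefficient is $\mathfrak{m}_0^h\widetilde{f}_0(x)=f_0(x)$ (under the normalization $\int g=1$ implicit in this section); for $|\eta|\geq 1$ it assembles $d_\eta\mathfrak{m}_\eta^h$ together with all cross products $d_\alpha\mathfrak{m}_\alpha^h\mathfrak{m}_{\eta-\alpha}^h$ coming from the double convolution at lower orders. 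The recursion~\eqref{eq:25} defining $c_\eta$ and $d_\eta$ is exactly what is needed so that this coefficient vanishes, and an induction on $|\eta|$ from $1$ to $p$ finishes the cancellation.

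The principal obstacle is the combinatorial bookkeeping of this last step: at each order $|\eta|$ one has to carefully identify the coefficient of $D^\eta\widetilde{f}_0(x)$ in the full expansion of $h^{-d}K_{hI}*f_\beta$ and match it against the right-hand side of the recursion defining $d_\eta$. The remainder control is by contrast essentially automatic, owing to the Schwartz-type decay of $g$; the final constant $M_\beta$ depends only on $\|f_0\|_{\mathcal{C}^\beta}$, on the Whitney extension constants, and on finitely many absolute moments of $g$, and is uniform in $x\in[-S,S]^d$ and in $h>0$.
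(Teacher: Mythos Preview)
Your proposal is correct and follows precisely the approach the paper invokes: the paper's proof simply notes that $\mathfrak{m}_{\alpha}^h \lesssim h^{|\alpha|}$ and defers to Lemma~2 of Shen, Tokdar and Ghosal (2013), which is exactly the Taylor-expansion-plus-recursion argument you have sketched in detail. One minor slip: the pointwise Taylor remainder for $D^{\alpha}\widetilde{f}_0(x-hu)$ expanded to order $p-|\alpha|$ is $O(|hu|^{\beta-|\alpha|})$, not $O(|hu|^{\beta})$; but your displayed conclusion $O(h^{\beta-|\alpha|})$ is already correct, so this does not affect the argument.
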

\begin{proof}
  Noticing that $\mathfrak{m}_{\alpha}^h \lesssim h^{|\alpha|}$, the proof
  follows from the same argument as in \cite[lemma~2]{Shen2013}, because
  $\widetilde{f}_0(x) = f_0(x)$ for all $x\in [-S,S]^d$.
\end{proof}
The \cref{pro:6} shows that any sufficiently regular function can be
approximated by continuous location mixtures of $K_{hI}$, provided $h$ is chosen
small enough and $g$ has enough finite moments. In the sequel, we will need
slightly more, that is approximating any $\beta$-Hölder continuous function by
\textit{discrete} mixtures of $K_{hI}$~; this is done by discretizing the
convolution operator in the next proposition. Compared to
\citet[lemma~3.1]{GhosalVanDerVaart2001}, we need to take extra cares regarding
the fact that $f_0$ can take negative values, and also to control the ``total
mass'' of the mixing measure.

\begin{proposition}
  \label{pro:13}
  Let $h > 0$ be small enough and $\zeta = 1 \vee 2/(\tau - \gamma \tau)$. There
  exists a discrete mixture $f(x) = \sum_{i=1}^N\alpha_i\, K_{hI}(x - \mu_i)$
  with $N \lesssim h^{-d} (\log h^{-1})^{d(\zeta - 1)}$, $\mu_i \in [-2S,2S]^d$
  for all $i=1,\dots,N$; such that $|f(x) - f_0(x)| \lesssim h^{\beta}$ for all
  $x\in [-S,S]^d$. Moreover $\sum_{i=1}^N |\alpha_i| \lesssim h^{-d}$, and
  $|\mu_i - \mu_j|_d \geq h^{\beta +1}$ for any $i\ne j$.
\end{proposition}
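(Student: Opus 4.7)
My plan is to start from Proposition~6, which already provides a continuous location mixture approximating $f_0$ to within $M_\beta h^\beta$ on $[-S,S]^d$, and then replace this convolution by a finite discrete mixture through a truncate-and-discretise procedure. First I would show that the tail of the integration domain is negligible: for $x\in[-S,S]^d$ and $\mu\notin[-2S,2S]^d$ one has $|x-\mu|_d\ge S$, so the exponential decay $|g(y)|\lesssim\exp(-C_0|y|_d^\tau)$ yields
\[
\Bigl|h^{-d}\int_{\reals^d\setminus[-2S,2S]^d}K_{hI}(x-\mu)f_\beta(\mu)\,d\mu\Bigr|\lesssim \exp\bigl(-C_0'(S/h)^\tau\bigr),
\]
which is $o(h^\beta)$ for $h$ small enough. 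It therefore suffices to discretise $h^{-d}\int_{[-2S,2S]^d}K_{hI}(x-\mu)f_\beta(\mu)\,d\mu$.

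Next I would partition $[-2S,2S]^d$ into axis-aligned cubes $C_1,\dots,C_N$ of common side $\delta$ with centres $\mu_i\in[-2S,2S]^d$, and define $\alpha_i:=h^{-d}\int_{C_i}f_\beta(\mu)\,d\mu$ together with $f(x):=\sum_i\alpha_i K_{hI}(x-\mu_i)$. The auxiliary conditions are then immediate: since $f_\beta$ is bounded on $\reals^d$ (as a finite linear combination of derivatives of the Whitney extension $\widetilde f_0$), one has $\sum_i|\alpha_i|\le h^{-d}(4S)^d\|f_\beta\|_\infty\lesssim h^{-d}$, and $|\mu_i-\mu_j|_d\ge\delta$ yields the required separation as soon as $\delta\ge h^{\beta+1}$. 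The cardinality is $N=(4S/\delta)^d$, so the expected scale $\delta\asymp h(\log h^{-1})^{-(\zeta-1)}$ with $\zeta=1\vee 2/(\tau(1-\gamma))$ produces exactly $N\lesssim h^{-d}(\log h^{-1})^{d(\zeta-1)}$.

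The heart of the argument is to show that for this $\delta$ the discretisation error
\[
E(x):=\sum_i h^{-d}\int_{C_i}\bigl\{g((x-\mu)/h)f_\beta(\mu)-g((x-\mu_i)/h)f_\beta(\mu_i)\bigr\}\,d\mu
\]
is of order $h^\beta$. I would Taylor-expand the integrand in $\mu$ around $\mu_i$ to some order $m$; by the central symmetry of $C_i$, odd-order terms integrate to zero. The hypothesis $|D^\alpha g(0)|\lesssim\exp(\gamma|\alpha|\log|\alpha|)$, combined with Cauchy-type estimates using the Schwartz nature of $g$, controls the order-$m$ remainder inside each cube by $\delta^d(C\delta/h)^m m^{\gamma m}$ times a factor that is integrable in $\mu_i$ against $g$. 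Summing over cubes and optimising in $m$ gives the critical balance $(\delta/h)^{1/(1-\gamma)}\gtrsim\log(1/h)$, which is met precisely by $\delta\asymp h(\log h^{-1})^{-(\zeta-1)}$, leading to $\sup_x|E(x)|\lesssim h^\beta$ and hence the claim.

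The principal obstacle is Step~3: one must split the sum over cubes into a ``near'' region (within radius $R\asymp h(\log h^{-1})^{1/\tau}$ of $x$) and a ``far'' region, bounding the former via the Taylor remainder and swallowing the latter into the exponential tail of $g$. The delicate point is the simultaneous tuning of the Taylor order $m$, the cube side $\delta$ and the truncation radius $R$ against the parameters $\tau$ and $\gamma$, so that the exponent $\zeta=1\vee 2/(\tau(1-\gamma))$ emerges cleanly rather than something slightly suboptimal.
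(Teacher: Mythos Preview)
Your one-atom-per-cube construction cannot deliver the stated bound on $N$. With $\alpha_i=h^{-d}\int_{C_i}f_\beta$ and the kernel evaluated at the centre $\mu_i$, you are using a midpoint rule, which has fixed (second) order of accuracy. Taylor-expanding $g((x-\mu)/h)$ in $\mu$ around $\mu_i$ produces terms of every order; the odd ones integrate to zero over a symmetric cube only if the weight $f_\beta$ is replaced by a constant, and in any case the even-degree terms $2,4,\dots,m-2$ do \emph{not} vanish. You bound only the order-$m$ remainder and silently drop the Taylor polynomial itself. The surviving degree-$2$ contribution already gives $|E(x)|\asymp(\delta/h)^2$, so forcing $|E(x)|\lesssim h^\beta$ needs $\delta\lesssim h^{1+\beta/2}$ and hence $N\gtrsim h^{-d(1+\beta/2)}$, far larger than $h^{-d}(\log h^{-1})^{d(\zeta-1)}$. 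No choice of $m$ repairs this, because a single node per cube cannot integrate polynomials of arbitrary degree exactly. (A secondary issue: the bound you invoke is on $D^\alpha g(0)$, whereas expanding about $\mu_i$ involves $D^\alpha g((x-\mu_i)/h)$; these are different objects.)

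The paper fixes exactly this gap by replacing the midpoint rule with Tchakaloff's quadrature theorem. On each cube $B_j$ of side $hM_h$, with $M_h\asymp(\log h^{-1})^{1/\tau}$, it builds a discrete signed measure $P_{j,k}$ with $O(k^d)$ atoms matching \emph{all} moments of the restriction of $f_\beta\,d\mu$ up to degree $k$. One then Taylor-expands $g$ about $0$ in the variable $(x-y)/h$; the moment matching kills the entire Taylor polynomial, leaving only the order-$k$ remainder, which is controlled through the hypothesis $|D^\alpha g(0)|\lesssim\exp(\gamma k\log k)$ together with $|x-y|/h\le 2M_h$ on the near cubes $N_x$. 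Choosing $k\asymp(\log h^{-1})^{\zeta}$ and counting $\#\Lambda_h\times O(k^d)$ atoms gives the claimed $N$. Your near/far splitting and the tuning against $\tau,\gamma$ are in the right spirit, but without a moment-matching quadrature the argument does not close.
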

\begin{proof}
  Let $Q$ be the signed measure defined by $A\mapsto \int_Af_{\beta}(y)dy$ for
  any measurable set $A\subseteq \mathbb R^d$. Let
  $M_h := (C_0^{-1}(\beta+d) \log h^{-1})^{1/\tau}$. To any $j \in \mathbb Z^d$
  we associate the cube $B_j := hM_h(j + [0,1]^d)$ and the signed measure $Q_j$
  such that $Q_j(A) := Q(A \cap B_j)$ for all measurable
  $A \subseteq \mathbb R^d$. Let $Q_j^+$, $Q_j^-$ denote respectively the
  positive and negative part of the Jordan decomposition of $Q_j$. It is a
  classical result from \citet{Tchakaloff1957} that we can construct discrete
  (positive) measures $P_{j,k}^+$, $P_{j,k}^-$ each having at most
  $(k+d)!/(k!d!)$ atoms and satisfying
  $\int R(x)Q_j^{\pm}(dx) = \int R(x)P_j^{\pm}(dx)$ for any polynomial $R(x)$ of
  degree $|\alpha| \leq k$. Let
  $\Lambda_h := \Set{j \in \mathbb Z^d \given |j| \leq 1 + S/(hM_h)}$ and for
  any $x \in \mathbb R^d$ let
  $N_x := \Set{j \in \Lambda_h\given \inf\Set{|x - y|_d \given y \in B_j } \leq
    hM_h}$. For the signed measure
  $P_k := \sum_{j\in \Lambda_h}(P_{j,k}^{+} - P_{j,k}^{-})$ the total variation
  of $P_k$ satisfy the bound
  \begin{align*}
    |P_k| \leq \sum_{j\in \Lambda_h}P_{j,k}^{+} + \sum_{j\in
            \Lambda_h}P_{j,k}^{-}
    \leq \sum_{j\in \mathbb Z^{d}}(P_{j,k}^+ + P_{j,k}^-) = |Q|.
  \end{align*}
  Notice that $|Q| < +\infty$ since we have $f_{\beta} \in L^1(\mathbb R^d)$.
  Moreover, letting $P_{j,k} = P_{j,k}^+ - P_{j,k}^-$
  \begin{multline}
    \label{eq:17}
    \int K_{hI}(x-y)(Q-P_k)(dy)
    = \sum_{j\notin \Lambda_h}\int_{B_j}g\left( \frac{x - y}{h} \right)Q_j(dy)\\
    + \sum_{j\in \Lambda_h\backslash N_x}\int_{B_j}g\left( \frac{x - y}{h}
    \right) (Q_j - P_{j,k})(dy) + \sum_{j\in N_x}\int_{B_j}g\left( \frac{x -
        y}{h} \right) (Q_j - P_{j,k})(dy).
  \end{multline}
  By assumptions on $g$, for any $x\in [-S,S]^d$ the first term of the rhs of
  \cref{eq:17} is bounded by $|Q| h^{\beta+d}$. With the same argument, using
  the definition of $N_x$, the second term of the rhs of \cref{eq:17} is bounded
  by $2|Q| h^{\beta+d}$. Regarding the last term, using multivariate Taylor's
  formula we write
  \begin{multline}
    \label{eq:13}
    \int_{B_j}g\left( \frac{x -
        y}{h} \right) (Q_j - P_{j,k})(dy)
    = \sum_{|\alpha| \leq k}\frac{D^{\alpha}g(0)}{\alpha !}
    \int_{B_j}\left(\frac{x - y}{h}\right)^{\alpha}(Q_j - P_{j,k})(dy)\\
    + \int_{B_j}R_k\left(\frac{x-y}{h}\right)(Q_j - P_{j,k})(dy),
  \end{multline}
  where $|R_k(x)| \leq \sup_{|\alpha| = k}|D^{\alpha}g(0)| |x|_d^k / k!$. The
  first term of the rhs of \cref{eq:13} vanishes by construction of
  $P_{j,k}$. For any $j \in N_x$ and any $y\in B_j$ it holds
  $|x - y|_d \leq 2hM_h$~; then using Stirling's formula and assumptions on
  $D^{\alpha}g$ the second term of the rhs of \cref{eq:13} is bounded by
  \begin{equation*}
    \sup_{|\alpha| =k}|D^{\alpha}g(0)| \frac{(2eM_h)^k}{\sqrt{2\pi k}
      k^k}\int_{B_j}|Q_j - P_{j,k}|(dy)
    \leq K_1 \exp\left\{-k (1-\gamma)\log k + k \log(2eM_h) \right\},
  \end{equation*}
  whenever $j\in N_x$, for a constant $K_1$ depending only on $f_0$, $\beta$ and
  $g$. Therefore, choosing $k \geq (2eM_h)^{2/(1-\gamma)}$, we deduce from
  \cref{eq:17,eq:13} that
  \begin{equation}
    \label{eq:12}
    \left| \int K_{hI}(x-y)(Q - P_k)(dy) \right|
    \leq 3|Q|h^{\beta+d} + K_1 \exp\left\{- \frac{1-\gamma}{2}k \log
      k\right\}.
  \end{equation}
  Now if $(2eM_h)^{2/(1-\gamma)}\geq 2(\beta+d)/(1-\gamma)\log h^{-1}$ set $k$
  to be the smaller integer larger than $(2eM_h)^{2/(1-\gamma)}$ ; otherwise set
  $k$ to be the larger integer greater than $2(\beta+d)/(1-\gamma)\log
  h^{-1}$. This yields the first part of the proposition with
  $f(x) = h^{-d}\int K_{hI}(x - y) P_k(dy)$ because of \cref{eq:12}, of
  \cref{pro:6} and because each of the $P_{j,k}$ has a number of atoms
  proportional to $(\log h^{-1})^{d \zeta}$ by Tchakaloff's theorem, all in
  $[-2S,2S]^d$ if $h$ is small enough.

  \par It remains to prove the separation between the atoms of $Q'_k$. But the
  cost to the supremum norm of moving one $\mu_i$ of $h^{\beta + 1}$ is
  proportional to $h^{\beta}$ by \cref{pro:11}. Hence we can assume that the
  support point of $Q_k'$ are chosen on a regular grid with $h^{\beta + 1}$
  separation within nodes (see also \citet[corollary~B1]{Shen2013}).
\end{proof}

\subsection{Kullback-Leibler property}
\label{sec:kullb-leibl-prop}

A simple computation shows that (see for instance \citet{Choi2007})
for $\theta_0=(f_0,\sigma_0)$ and $\theta = (f,\sigma)$,
\begin{gather*}
  K_{i}(\theta_0,\theta) = \log\frac{\sigma}{\sigma_0} - \frac 12 \left(1 -
    \frac{\sigma_0^2}{\sigma^2}\right) + \frac 12 \frac {|f_0(x_i) -
    f(x_i)|^2}{\sigma^2},\\
  V_{2;i}(\theta_0,\theta) = \frac 12 \left(1 -
    \frac{\sigma_0^2}{\sigma^2}\right)^2 +
  \frac{\sigma_0^4}{\sigma^4}|f_0(x_i)-f(x_i)|^2.
\end{gather*}
Therefore, for all $0 < \epsilon \leq 1/2$, there exists a constant $C_0>0$
(depending only on $\theta_0$) such that one has the inclusions
\begin{align}
  \label{eq:2b}
  K_n(\theta_0,\epsilon)
  &\supseteq \Set*{ (f,\sigma) \given \|f - f_0\|_{\infty}^2 \leq
    C_0\epsilon^2 ,\ \sigma_0 \leq \sigma \leq \sigma_0(1 + C_0\epsilon^2)},
\end{align}
hence probabilities of Kullback-Leibler balls around $\theta_0$ are lower
bounded by the probability of the sets defined in the rhs of \cref{eq:2b}. Now
we state and prove the main result of this section.

\begin{lemma}
  \label{lem:2}
  Let $f_0\in \mathcal{C}^{\beta}[-S,S]^d$, and $\zeta > 1$ as in
  \cref{pro:13}. Then there exists a constant $C>0$, not depending on $n$, such
  that $\Pi(K_n(\theta_0,\epsilon_n)) \gtrsim \exp(-n\epsilon_n^2)$ for
  \begin{equation*}
    \epsilon_n^2 = Cn^{-2\beta/(2\beta + d + \kappa/2)}(\log n)^{2\beta d(\zeta
      - 1)/(2\beta + d + \kappa/2)}.
  \end{equation*}
\end{lemma}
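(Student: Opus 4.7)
The plan is to invoke the inclusion \cref{eq:2b} and lower bound separately
\begin{equation*}
  \Pi(\|f - f_0\|_\infty \leq C_0^{1/2}\epsilon_n) \quad\text{and}\quad P^\sigma(\sigma_0\leq\sigma\leq\sigma_0(1+C_0\epsilon_n^2)).
\end{equation*}
The second factor is at least $b_9(C_0\epsilon_n^2)^{a_9}$ by \cref{eq:8b}, so its logarithm is $O(\log\epsilon_n^{-1})= o(n\epsilon_n^2)$ and is absorbable. For the first factor, fix $h_n\asymp\epsilon_n^{1/\beta}$ and apply \cref{pro:13} to produce a signed discrete mixture $f^*(x) = \sum_{i=1}^{N}\alpha_i\, K_{h_n I}(x-\mu_i)$ with $N\lesssim h_n^{-d}(\log h_n^{-1})^{d(\zeta-1)}$, $\sum_i|\alpha_i|\lesssim h_n^{-d}$, $\min_{i\ne j}|\mu_i-\mu_j|_d\geq h_n^{\beta+1}$, and $\|f^*-f_0\|_\infty\lesssim\epsilon_n$. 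It then suffices to bound from below $\Pi(\|f-f^*\|_\infty\leq c\epsilon_n)$.

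I would use the Poisson representation of $Q$: its jumps form a Poisson point process on $(\reals\setminus\{0\})\times\mathcal{E}_s\times[-2S,2S]^d$ with intensity $|u|^{-1}e^{-\eta|u|}du\otimes\alpha F_A(dA)\otimes F_\mu(d\mu)$. Consider the event $E_n$ on which $Q$ has, for each $i\leq N$, exactly one jump in the product set
\begin{equation*}
  U_i\times B\times V_i := [\alpha_i-\delta_n,\alpha_i+\delta_n]\times B\times B(\mu_i; h_n^{\beta+1}/2),
\end{equation*}
with $\delta_n := c\epsilon_n/(N\|g\|_\infty)$ and $B$ the set of symmetric positive definite matrices whose inverse eigenvalues all lie in $[h_n^{-1},h_n^{-1}(1+t_n)]$ for $t_n=\epsilon_n/n$, together with no further jump of magnitude above $\delta_n/2$. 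Splitting $Q=Q_1+Q_2$ as in the proof of \cref{lem:9b} and using \cref{eq:22} to dominate the small-jump part, together with the standard Lipschitz bound for $(A,\mu)\mapsto K_A(\cdot-\mu)$ in sup-norm that is already used in the proof of \cref{pro:4b}, one verifies that $E_n$ forces $\|f-f^*\|_\infty\lesssim\epsilon_n$.

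By Poisson calculus $\Pi(E_n)\geq e^{-\Lambda_n}\prod_{i=1}^N\lambda_i$, where $\Lambda_n = O(\log n)$ is the total Poisson mean above the threshold and $\lambda_i = \alpha F_\mu(V_i)\,F_A(B)\int_{U_i}|u|^{-1}e^{-\eta|u|}du$. Using \cref{eq:16} and \cref{eq:14} (the latter with $s_j=h_n^{-1}$ and $t=t_n$), and the trivial bound $\int_{U_i}|u|^{-1}e^{-\eta|u|}du \gtrsim \delta_n(1+|\alpha_i|)^{-1}e^{-\eta(|\alpha_i|+\delta_n)}$, each $\log\lambda_i$ is bounded below by a fixed constant times $\log\epsilon_n - \eta|\alpha_i| - C_3 h_n^{-\kappa/2}$. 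Summing over $i\leq N$ and using $\sum_i|\alpha_i|\lesssim h_n^{-d}\ll n\epsilon_n^2$, the dominant term is $-C_3 N h_n^{-\kappa/2}$ and the remainder is $O(N\log n)$. With $N\asymp h_n^{-d}(\log h_n^{-1})^{d(\zeta-1)}$ and $h_n = \epsilon_n^{1/\beta}$, equating $N h_n^{-\kappa/2}\asymp n\epsilon_n^2$ yields exactly $\epsilon_n^2\asymp n^{-2\beta/(2\beta+d+\kappa/2)}(\log n)^{2\beta d(\zeta-1)/(2\beta+d+\kappa/2)}$.

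The principal obstacle is the tail factor $\exp(-C_3 s_d^{\kappa/2})$ in \cref{eq:14} at $s_d\asymp h_n^{-1}$: the $N\sim h_n^{-d}$ independent bandwidth events contribute this factor once each, so the product carries exponent $-C_3 N h_n^{-\kappa/2}$, and it is this product that pins down the power of $n$ in the final rate. The remaining effort is bookkeeping: verifying that the Lipschitz estimates for $K_A$ translate the chosen widths $\delta_n,t_n,h_n^{\beta+1}$ into a sup-norm bound of order $\epsilon_n$, and that all residual logarithmic and $\eta|\alpha_i|$ terms are dominated by the $(\log n)$ power carried by $\epsilon_n$.
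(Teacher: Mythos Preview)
Your overall route is sound and does reach the stated rate, but it is \emph{not} the paper's argument, and one of your ``bookkeeping'' steps actually fails as written.

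\textbf{The gap.} With location balls $V_i=B(\mu_i;h_n^{\beta+1}/2)$, the Lipschitz bound from \cref{pro:11} gives, for $(A,\mu')\in B\times V_i$,
\[
  |K_{A}(x-\mu')-K_{h_nI}(x-\mu_i)|
  \lesssim t_n + \|A^{-1}\|\,|\mu'-\mu_i|
  \lesssim t_n + h_n^{-1}\cdot h_n^{\beta+1}
  \asymp \epsilon_n.
\]
But this per-component error must be weighted by $|\alpha_i|$ and summed: the relevant term in $\|f-f^*\|_\infty$ is $\sum_i|\alpha_i|\,|K_{A_i}(x-\mu_i')-K_{h_nI}(x-\mu_i)|$, and since $\sum_i|\alpha_i|\lesssim h_n^{-d}$ you only get $h_n^{-d}\epsilon_n$, which is not $O(\epsilon_n)$ unless $\beta>d$. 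The fix is to shrink the radius to order $h_n^{\beta+d+1}$ (the paper uses exactly this): then the per-component error is $\lesssim h_n^{\beta+d}$ and the weighted sum is $\lesssim h_n^{\beta}\asymp\epsilon_n$. This only changes $\log F_\mu(V_i)$ by $O(\log h_n^{-1})$, so the dominant $-C_3Nh_n^{-\kappa/2}$ term and the final rate are unaffected.

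\textbf{Comparison with the paper.} The paper does not go through the Poisson jump representation at all. It partitions $\mathcal{E}_s\times[-2S,2S]^d$ into the disjoint cells $V_j=\mathcal{E}_{s,h}\times U_j$ plus a complement $V^c$, and uses that the variables $Q(V_j)$ are independent with $\mathrm{SGa}(\alpha F(V_j),\eta)$ distribution. The small-ball estimate \cref{pro:5} then gives $\Pr(|Q(V_j)-\alpha_j|\le\delta)\gtrsim \delta\,e^{-(3+\eta)|\alpha_j|}/\Gamma(\alpha F(V_j))$, and the key point is $\Gamma(x)\asymp x^{-1}$ for small $x$, so each factor contributes $\alpha F(V_j)\gtrsim h^q e^{-C_3 h^{-\kappa/2}}$. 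Your Poisson argument recovers the same factor via $\lambda_i\asymp F_\mu(V_i)F_A(B)\cdot(\text{jump integral})$, so the two computations align term by term. The paper's version is somewhat cleaner in that it never has to worry about individual jump sizes (in particular, no issue when $|\alpha_i|$ is small compared to $\delta_n$, which in your setup requires dropping or separately handling those components), and it treats the complement $V^c$ uniformly by the same device. Your approach, on the other hand, makes the Poisson structure explicit and would transfer more directly to other L\'evy bases.
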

\begin{proof}
  By \cref{pro:13} for any $h>0$ sufficiently small, there is
  $N \lesssim h^{-d}(\log h^{-1})^{d(\zeta - 1)} $ and a function
  $f_h(x) = \sum_{j=1}^N\alpha_{j}\, K_{hI}(x - \mu_j)$ such that
  $|f_h(x) - f_0(x)| \lesssim h^{\beta}$ for all $x \in [-S,S]^d$, with
  $\alpha_{j} \in \reals$ for all $j=1,\dots N$, $\mu_i \in [-2S,2S]^d$ for all
  $i=1,\dots,N$, and $|\mu_i - \mu_j|_d \geq h^{\beta + 1}$ whenever $i\ne
  j$. Let define
  \begin{align*}
    \mathcal{E}_{s,h}
    &:= \Set*{A \in \mathcal{E}_s \given h^{-1} \leq \lambda_i(A^{-1})
      \leq h^{-1}(1 + h^{\beta+d}),\quad i=1,\dots,d}.
  \end{align*}
  We construct a partition of $\mathcal{E}_s\times [-2S,2S]^d$ in the following
  way : for all $j=1,\dots,N$, let $U_j$ be the closed ball of radius
  $h^{\beta + d +1}$ centered at $\mu_j$ (observe that these balls are
  disjoint), and set $V_j := \mathcal{E}_{s,h} \times U_j$,
  $V^c := \mathcal{E}_s\times [-2S,2S]^{d} \backslash U_{j=1}^NV_j$. Let
  $\mathcal{Q}$ denote the set of signed measures on
  $\mathcal{E}_{s}\times [-2S,2S]^{d}$ satisfying
  $Q\in \mathcal{Q} \Rightarrow |Q(V_j) - \alpha_{h,j}| \leq h^{\beta}N^{-1}$
  for all $j=1,\dots,N$, and $|Q|(V^c) \leq h^{\beta}$. Notice that for any
  $Q \in \mathcal{Q}$ we have
  $|Q| \leq \sum_{j=1}^N|Q(V_j) - \alpha_{h,j}| + \sum_{j=1}^N|\alpha_{h,j}|
  \lesssim h^{\beta} + h^{-d} \lesssim h^{-d}$ because of \cref{pro:13}. Then
  for any $Q\in \mathcal{Q}$ and all $x\in [-S,S]^d$, using \cref{pro:11},
  \begin{multline*}
    \left|\int_{\mathcal{E}_s\times[-2S,2S]^d}K_A(x - \mu)\, Q(dAd\mu) -
      f_h(x)\right| \lesssim  \sum_{j=1}^N|Q(V_j) - \alpha_{h,j}| + |Q|(V^c)\\
    + \sum_{j=1}^N\int_{V_j}\left|K_A(x - \mu) - K_{hI}(x - \mu_j)\right|\,
    |Q|(dAd\mu) \lesssim h^{\beta}.
  \end{multline*}
  Thus for all $Q \in \mathcal{Q}$ and all $x\in[-S,S]^d$, we have
  $|\int K_A(x - \mu)\,Q(dAd\mu) - f_0(x)| \leq |\int K_A(x - \mu)\,Q(dAd\mu) -
  f_h(x)| + |f_h(x) - f_0(x)| \leq K_{1}h^{\beta}$ for a constant $K_1> 0$ not
  depending on $h$. By the assumptions of \cref{eq:16,eq:14} we have for any
  $j=1,\dots,N$
  \begin{align*}
    \alpha F_A(\mathcal{E}_{s,h})F_{\mu}(U_j)
    &\geq \alpha b_1b_4 h^{a_1(\beta + d + 1)- a_4+ a_5(\beta +d)} \exp(-C_3
      h^{-\kappa/2})\\
    &=: K_2 h^{q} \exp(-C_3h^{-\kappa/2}),
  \end{align*}
  where $q := a_1(\beta + d +1)-a_4+ a_5(\beta +d)$ and the constant $K_2 > 0$
  not depending on $h$.

  \par For $h > 0$ sufficiently small, it is clear that
  $K_2 h^{q} \exp(-C_3h^{-\kappa/2}) < F(V_j) \leq 1$ for all $j=1,\dots,N$. We
  also assume without loss of generality that
  $K_2 h^{q} \exp(-C_3h^{-\kappa/2}) \leq F(V^c) \leq 1$ and we set
  $V_{N+1}:=V^c$, $\alpha_{h,N+1} :=0$ ; otherwise we subdivide $V^c$ onto
  smaller subsets for which the relation is verified. Because $F$ is a
  probability measure, this can be done with a finite number of subsets not
  depending on $h$. Now let
  $W := \Set{\sigma > 0 \given \sigma_0 \leq \sigma \leq \sigma_0(1 +
    C_0\epsilon_n^2)}$ and $\epsilon_n = C_0^{-1}K_1h^{\beta}$. Notice that
  $P^{\sigma}(W) \geq K_3\epsilon_n^{2a_9}$ with a constant $K_3>0$ eventually
  depending on $\theta_0$. The sets $\mathcal{E}_{s,h}\times U_j$ are disjoint,
  hence by \cref{eq:8b} and \cref{pro:5} we deduce that there is a constant
  $K_4 > 0$ such that
  \begin{align*}
    \Pi(K_n(\theta_0,\epsilon_n))
    &\geq P^{\sigma}(W)\Pi_{*}(\mathcal{Q})
      \gtrsim \epsilon_n^{2a_9} \prod_{i=1}^{N+1}\left(\frac{h^{\beta}N^{-1}
      \e^{-(3+\eta)|\alpha_j|}}{\Gamma(\alpha(F_A(\mathcal{E}_h)
      F_{\mu}(U_j))}\right)\\
    &\geq \exp\left\{- K_4 h^{-(d+\kappa/2)}(\log h^{-1})^{d(\zeta - 1)}
      \right\},
  \end{align*}
  where we used that $N \lesssim h^{-d}(\log h^{-1})^{d(\eta-1)}$,
  $\sum_{j=1}^N|\alpha_{h,j}| \lesssim h^{-d}$ and $\Gamma(x)\lesssim x^{-1}$
  for $x > 0$ sufficiently small. This concludes the proof.
\end{proof}

\section{Proof of \cref{thm:4}}
\label{sec:proofs-1}

As in \cref{sec:rates-convergence-loc-scale}, the proof of \cref{thm:4} consists
on verifying the condition established in \cref{thm:3}.

\subsection{Sieve construction}
\label{sec:sieve-construction}

For constants $H,M > 0$ to be determined later, we define
\begin{gather*}
  \small
  \Theta_{n} := \Set*{ (f,\sigma) \given
  \begin{array}{l}
    f(x) = \int K_{\xi,\phi}(x - \mu)Q(d\xi d\mu d\phi),\
    \supp Q = \mathbb R^d \times [-2S,2S]^d \times [0,\pi/2],\\
    Q = \sum_{i=1}^{\infty}u_i\delta_{\xi_i,\mu_i,\phi_i},\
    n^{-2/a_8} < \sigma^{2} \leq n^{-2/a_8}(1+M\epsilon_n)^n \\
    \sum_{i=1}^{\infty}|u_i|\leq n,\ \#\Set{i \given |u_i| > n^{-1},\ |\xi_i|_d
    \leq \e^{2n\epsilon_n^2}} \leq Hn\epsilon_n^2 / \log n,\\
    \sum_{i=1}^{\infty}|u_i|\Ind\Set{|\xi_i|_d  > \e^{2n\epsilon_n^2} } \leq
    M\epsilon_n,\quad \sum_{i=1}^{\infty}|u_i|\Ind\Set{|u_i| \leq n^{-1}} \leq
    M\epsilon_n
  \end{array}
}.
\end{gather*}

In the sequel, we assume without loss of generality that the jumps of $Q$ in the
definition of $\Theta_n$ are ordered so that there is no jump with
$|u_i| > n^{-1}$ and $|\xi_i|_d \leq \e^{2n\epsilon_n^2}$ when
$i>H n\epsilon_n^2 / \log n$. Moreover, we consider the following partition of
$\Theta_n$. Let $H_n$ be the largest integer smaller than
$Hn\epsilon_n^2 / \log n$. Then for any
$j = (j_1,\dots,j_{H_n})\in \Set{1,2,\dots}^{H_n}$ we define the slices
\begin{equation*}
  \Theta_{n,j} :=
  \Set*{(f,\sigma) \in \Theta_n \given
    \sqrt{n}(j_i -1) \leq |\xi|_d < \sqrt{n}j_i, \quad
    \forall i \leq H_n }.
\end{equation*}

\begin{lemma}
  \label{lem:1}
  Assume that there is $0 < \gamma_1 < 1$ such that
  $\epsilon_n^2 \geq n^{-\gamma_1}$ for all $n$ large enough. Then for
  $H = 6(1 - \gamma_1)^{-1}$ it holds
  $\Pi(\Theta_n^c) \lesssim \exp(-3n\epsilon_n^2)$ as $n\rightarrow \infty$.
\end{lemma}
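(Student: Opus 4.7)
The proof will closely follow the template established in \cref{lem:9b}, replacing the matrix-tail controls by the frequency-vector tail \cref{eq:11}. The plan is to union-bound $\Pi(\Theta_n^c)$ by the six obvious events defining $\Theta_n$, and handle each summand with the same tools as before.

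First I would split
\begin{multline*}
\Pi(\Theta_n^c) \leq \Pi\!\left(\#\{i\,:\,|u_i|>n^{-1}\}>Hn\epsilon_n^2/\log n\right)
+\Pi\!\left(\textstyle\sum_i|u_i|>n\right)\\
+\Pi\!\left(\textstyle\sum_i|u_i|\Ind\{|u_i|\leq n^{-1}\}>M\epsilon_n\right)
+\Pi\!\left(\textstyle\sum_i|u_i|\Ind\{|\xi_i|_d>\e^{2n\epsilon_n^2}\}>M\epsilon_n\right)\\
+P^{\sigma}(\sigma^2\leq n^{-2/a_8})+P^{\sigma}(\sigma^2>n^{-2/a_8}(1+M\epsilon_n)^n),
\end{multline*}
and observe that the two $\sigma$-terms are controlled directly by \cref{eq:29,eq:27}, with no change from the location-scale case.

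Next, I would import verbatim from the proof of \cref{lem:9b} the superposition decomposition $Q=Q_1+Q_2$, where $Q_1$ carries the jumps of magnitude $\geq n^{-1}$ and $Q_2$ the small ones, together with their Laplace transforms \cref{eq:1,eq:22}. The number of jumps of $Q_1$ is Poisson with intensity $\alpha E_1(n^{-1}/\eta)\leq 2\alpha\log n$, so Chernoff's inequality on a Poisson variable yields the first term at rate $\exp\{-(1-\gamma_1)(Hn\epsilon_n^2)/(2\log n)\cdot\log n\}\leq \e^{-3n\epsilon_n^2}$ upon choosing $H=6(1-\gamma_1)^{-1}$. The second and third terms are handled by Markov's inequality applied respectively to the $\Ga(2\alpha,\eta)$ variable $\sum_i|u_i|=|Q|(\mathbb R^d\times[-2S,2S]^d\times[0,\pi/2])$ and to $|Q_2|$ via its Laplace transform, exactly as in \cref{lem:9b}.

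The only genuinely new ingredient is the fourth term, where the role previously played by $F_A(\mathcal{D}_n^c)$ is now played by the tail $F_\xi(\{|\xi|_d>\e^{2n\epsilon_n^2}\})$. The random variable $X:=\sum_i|u_i|\Ind\{|\xi_i|_d>\e^{2n\epsilon_n^2}\}$ is equal in distribution to the total variation of $Q$ restricted to $\{|\xi|_d>\e^{2n\epsilon_n^2}\}\times[-2S,2S]^d\times[0,\pi/2]$, hence has $\Ga(2\alpha F_\xi(|\xi|_d>\e^{2n\epsilon_n^2}),\eta)$ distribution. By \cref{eq:11},
\begin{equation*}
F_\xi(|\xi|_d>\e^{2n\epsilon_n^2})\leq b_{11}(1+\e^{2n\epsilon_n^2})^{-2(\eta+1)}\lesssim \e^{-4(\eta+1)n\epsilon_n^2},
\end{equation*}
so $\mathbb E X\lesssim \e^{-4(\eta+1)n\epsilon_n^2}\ll \epsilon_n/2$ and Chebyshev's inequality gives $\Pi(X>M\epsilon_n)\lesssim \e^{-4(\eta+1)n\epsilon_n^2}/\epsilon_n^2$, which for $\eta>(d-1)/2\geq 0$ is $o(\e^{-3n\epsilon_n^2})$.

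The main obstacle will simply be checking that the parameter $\eta$ in the tail bound \cref{eq:11} is large enough to beat $\e^{-3n\epsilon_n^2}$ after the blow-up factor $\epsilon_n^{-2}$; since $2(\eta+1)>1$ this is automatic, so the argument goes through with only cosmetic changes from \cref{lem:9b}.
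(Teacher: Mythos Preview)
Your proposal is correct and follows exactly the route the paper takes: the paper's proof simply refers back to \cref{lem:9b} and notes that the only new ingredient is verifying $F_{\xi}(|\xi|_d\geq \e^{2n\epsilon_n^2})\lesssim \epsilon_n^2\e^{-3n\epsilon_n^2}$, which follows from \cref{eq:11} since $\eta>0$. Your detailed write-up is precisely the unpacked version of this sketch, with the same union bound, the same superposition/Chernoff/Markov/Chebyshev steps, and the same tail estimate on $F_\xi$.
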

\begin{proof}
  According to the proof of \cref{lem:9b}, the result holds if
  $F_{\xi}(\xi\ :\ |\xi_d| \geq \e^{2n\epsilon_n^2}) \lesssim
  \epsilon_n^2\exp(-3n\epsilon_n^2)$ for $n$ sufficiently large. Then the
  conclusion follows from \cref{eq:11} because $\eta > 0$.
\end{proof}

\begin{lemma}
  \label{lem:4}
  Let $\epsilon_n \rightarrow 0$ with $n\epsilon_n^2 \rightarrow \infty$ and
  $K = 3(32 \vee 4\sigma_0^2)$. Then there exists $M > 0$ such that it holds
  $\sum_{j}\sqrt{N(M\epsilon_n,\Theta_{n,j},\rho_n)} \sqrt{\Pi(\Theta_{n,j})}
  \e^{-(K M^2 - 2)n\epsilon_n^2} \rightarrow 0$.
\end{lemma}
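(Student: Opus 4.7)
The plan is to mirror the strategy of \cref{lem:6} for location-scale mixtures, with the slicing on eigenvalue ratios replaced by an annular slicing on $|\xi_i|_d$. Two ingredients must be assembled: a bound on the prior mass $\Pi(\Theta_{n,j})$ and a metric-entropy bound for $\Theta_{n,j}$ in $\rho_n$. For the prior mass, invoke the Poisson superposition $Q = Q_1 + Q_2$ used in the proof of \cref{lem:9b}: the $H_n$ ``big jump'' atoms of $Q_1$ carry independent marks drawn from $F$, and the event $\Theta_{n,j}$ forces each to satisfy $|\xi_i|_d \geq \sqrt n(j_i - 1)$. The tail bound \eqref{eq:11} then gives
\[
  \Pi(\Theta_{n,j}) \leq b_{11}^{H_n}\prod_{i\leq H_n}\bigl(1+\sqrt n(j_i-1)\bigr)^{-2(\eta+1)}.
\]

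For the covering, adapt \cref{pro:4b} to the modulation kernel. Discretize weights on an $\ell^1$-net of size $M\epsilon_n$, scale $\sigma$ geometrically, phases $\phi_i$ at scale $M\epsilon_n/n$, and finally the pairs $(\mu_i,\xi_i)$ at scales chosen so that the total $\|\cdot\|_{2,n}$-cost summed over atoms is $\lesssim M\epsilon_n$. The Lipschitz estimate for $K_{\xi,\phi}(\cdot-\mu)$ in its parameters, combined with $\sum_i |u_i|\leq n$ and $|\xi_i|_d \leq \e^{2n\epsilon_n^2}$ on $\Theta_n$, permits a location grid whose log-cardinality is $O(H n\epsilon_n^2)$, independently of $j$. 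Each frequency $\xi_i$ is confined to the annulus $A_{j_i} := \{\xi : \sqrt n(j_i-1)\leq |\xi|_d <\sqrt n j_i\}$ whose volume is of order $n^{d/2}j_i^{d-1}$, contributing $(d-1)\log j_i + O(\log n)$ per atom. Collecting these yields
\[
  \log N(M\epsilon_n,\Theta_{n,j},\rho_n) \leq CHn\epsilon_n^2 + (d-1)\sum_{i\leq H_n}\log j_i,
\]
with $C$ independent of $M$ and $j$.

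Combining,
\[
  \sqrt{N(M\epsilon_n,\Theta_{n,j},\rho_n)}\sqrt{\Pi(\Theta_{n,j})} \leq \e^{CHn\epsilon_n^2/2}\prod_{i\leq H_n} j_i^{(d-1)/2}\bigl(1+\sqrt n(j_i-1)\bigr)^{-(\eta+1)}.
\]
The sum over $j\in\{1,2,\dots\}^{H_n}$ factorizes into $H_n$ copies of the univariate sum
\[
  S_n := \sum_{k\geq 1} k^{(d-1)/2}\bigl(1+\sqrt n(k-1)\bigr)^{-(\eta+1)} \leq 1 + C_1 n^{-(\eta+1)/2}\sum_{k\geq 2}\frac{k^{(d-1)/2}}{(k-1)^{\eta+1}},
\]
whose residual series converges exactly under the assumption $\eta > (d-1)/2$. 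Then $S_n^{H_n} \leq \exp(C_2 H_n n^{-(\eta+1)/2})$, which together with the prefactor is bounded by $\exp(C_3 Hn\epsilon_n^2)$. Choose $M$ with $KM^2 > 2 + C_3 H$ to conclude.

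The main obstacle is the covering step. Because perturbing the location $\mu_i$ rotates the oscillatory factor $\cos(\xi_i \cdot (x-\mu_i) + \phi_i)$ at a rate proportional to $|\xi_i|_d$, a naive per-atom discretization of $\mu$ would introduce an additional factor of $j_i^d$ in the covering count and require $\eta > d - \tfrac12$. The balancing of grid resolutions described above, which shifts the cost of large frequencies into a uniform global contribution of order $H n\epsilon_n^2$, is what retains only the annulus-volume factor $j_i^{d-1}$ and matches the sharp assumption $\eta > (d-1)/2$ imposed in \cref{sec:locat-modul-mixt-1}.
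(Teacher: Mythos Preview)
Your argument mirrors the paper's proof exactly: the prior-mass bound via \cref{eq:11}, the entropy estimate $\log N(M\epsilon_n,\Theta_{n,j},\rho_n)\leq CHn\epsilon_n^2+(d-1)\sum_{i\leq H_n}\log j_i$ (stated separately in the paper as \cref{pro:8}), the factorized sum $S_n^{H_n}$ under $\eta>(d-1)/2$, and the final choice of $M$ all coincide. The only divergence is your last paragraph: the paper does not perform any ``balancing of grid resolutions'' but simply invokes \cref{pro:12}, which asserts $|K_{\xi,\phi}(x-\mu_1)-K_{\xi,\phi}(x-\mu_2)|\leq C|\mu_1-\mu_2|_d$ with $C$ independent of $\xi$; a uniform $(M\epsilon_n/n)$-net for the locations then contributes only $O(H_n\log n)=O(Hn\epsilon_n^2)$ to the log-entropy uniformly in $j$, so the extra $j_i^d$ factor you anticipate never appears in the paper's route.
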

\begin{proof}
  With the same argument as in \cref{lem:6}, it follows from \cref{eq:11} that
  for any $j \in \Set{1,2,\dots}^{H_n}$
  \begin{align*}
    \Pi(\Theta_{n,j})
    &\leq \textstyle \prod_{i\leq H_n} F_{\xi}(\xi\ :\ |\xi|_d \geq \sqrt{n}(j_i
      - 1) )
      \leq b_{11}^{H_n}\prod_{i\leq H_n} (1 + \sqrt{n}(j_i - 1))^{-2(\eta + 1)}.
  \end{align*}
  Moreover, using \cref{pro:8} we can find a constant $C> 0$ independent of $M$
  such that
  $N(M\epsilon_n,\Theta_{n,j},\rho_n) \leq \exp(2CHn\epsilon_n^2) \prod_{i\leq
    H_n}j_i^{d-1}$ when $n$ is large. Therefore, for those $n$
  \begin{multline*}
    \textstyle
    \sqrt{N(M\epsilon_n,\Theta_{n,j},\rho_n)}\sqrt{\Pi(\Theta_{n,j})}\\
    \leq
    \exp\left\{Hn\epsilon_n^2\left(C + \frac{\log b_{11}}{2\log n}\right)
    \right\} \prod_{i\leq H_n} j_i^{(d-1)/2}[1 + \sqrt{n}(j_i-1)]^{-(\eta+1)}.
  \end{multline*}
  For $n$ large enough we have $\log b_{11} \leq 2C\log n$ ; then provided
  $\eta > (d-1)/2$, we can sum over $j\in \Set{1,2,\dots}^{H_n}$ the last
  expression to get
  \begin{multline*}
    \textstyle \sum_{j} \sqrt{N(M\epsilon_n,\Theta_{n,j},\rho_n)}
    \sqrt{\Pi(\Theta_{n,j})}\\
    \begin{aligned}
      &\leq \exp\left\{2C Hn\epsilon_n^2\right\} \left( \textstyle \sum_{k\geq
          1} k^{(d-1)/2}[1 + \sqrt{n}(k -
        1)]^{-(\eta+1)} \right)^{H_n}\\
      &\leq \exp\left\{2CHn\epsilon_n^2 \right\} \left(1 +
        n^{-(\eta+1)/2}\textstyle \sum_{k\geq 1}k^{(d-1)/2 - (\eta+1)}
      \right)^{H_n} \lesssim \exp\{ 3CHn\epsilon_n^2 \},
    \end{aligned}
  \end{multline*}
  where the last inequality holds for $n$ sufficiently large. Now choose $M > 0$
  satisfying $KM^2 > 2 + 3CH$ to obtain the conclusion of the
  lemma.
\end{proof}

\begin{proposition}
  \label{pro:8}
  For $n$ large enough there is a constant $C > 0$ independent of $M$ such that
  for any sequence $\epsilon_n \rightarrow 0$ with
  $n\epsilon_n^2 \rightarrow \infty$, the following holds for any
  $j \in \Set{1,2,\dots}^{H_n}$.
  \begin{equation*}
    \log N(M\epsilon_n,\Theta_{n,j},\rho_n)
    \leq CH n \epsilon_n^2 + (d-1)\sum_{i\leq H_n}\log j_i.
  \end{equation*}
\end{proposition}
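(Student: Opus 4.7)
The plan is to adapt the construction from \cref{pro:4b}, where one builds an $M\epsilon_n$-net of $\Theta_{n,j}$ by producing nets for each parameter of the truncated particle representation. In the present location-modulation setting, the dilation matrix $A$ is replaced by the frequency $\xi$ and the phase $\phi$, and the slice index $j_i$ now forces $|\xi_i|_d$ to lie in the annular shell $\{\xi:\sqrt n(j_i-1)\leq |\xi|_d<\sqrt n j_i\}$ rather than controlling a condition number.

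The key preliminary, and what I expect to be the main technical subtlety, is a reparameterization of the kernel. I would write
\[
  K_{\xi,\phi}(x-\mu) = g(x-\mu)\cos(\xi\cdot x + \tilde\phi), \qquad \tilde\phi := \phi - \xi\cdot\mu,
\]
and regard the kernel as a function of $(\mu,\xi,\tilde\phi)$. The partial derivatives with respect to each of these variables are then bounded uniformly in $\xi$ by a constant depending only on $g$ and $S$, through $\|g\|_\infty$, $\|\nabla g\|_\infty$ and $\sup_{x\in[-S,S]^d}|x|_d$. Without this step, a direct bound on $\partial_\mu K$ would carry an extra factor $|\xi|_d\leq \sqrt n j_i$, producing a spurious additional $j_i$ per particle and yielding $j_i^d$ instead of the required $j_i^{d-1}$.

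I would then set $\delta_n := M\epsilon_n/n$ and introduce the standard building blocks as in~\cref{pro:4b}: an $\ell_1$-net of size $\lesssim (n/M\epsilon_n)^{H_n}$ for the significant weights $(u_1,\dots,u_{H_n})$ subject to $\sum_i|u_i|\leq n$; a $\delta_n$-net $\widehat R_n$ of $[-2S,2S]^d$ for the locations; a $\delta_n$-net of $[0,2\pi]$ for the reparameterized phases $\tilde\phi_i$ (using $2\pi$-periodicity of the cosine); a set of size $n$ for the admissible $\sigma$'s; and, crucially, a $\delta_n$-net $\widehat X_{n,j_i}$ of the annular shell. Bounding the latter by the shell volume
\[
  \mathrm{vol}\bigl(\{\xi:\sqrt n(j_i-1)\leq|\xi|_d<\sqrt n j_i\}\bigr)\lesssim n^{d/2}\bigl(j_i^d-(j_i-1)^d\bigr)\lesssim n^{d/2}j_i^{d-1}
\]
gives $\#\widehat X_{n,j_i}\lesssim n^{3d/2}(M\epsilon_n)^{-d}j_i^{d-1}$; this elementary shell estimate is precisely what produces the exponent $j_i^{d-1}$ rather than $j_i^d$.

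Finally, for any $(f,\sigma)\in\Theta_{n,j}$ with $f(x)=\sum_{i\geq 1}u_iK_{\xi_i,\phi_i}(x-\mu_i)$, I would select elements of the nets above approximating each parameter and form the candidate $\widehat f$. The error $\|f-\widehat f\|_{2,n}$ splits into a tail contribution controlled by the sieve bounds $\sum_{i>H_n}|u_i|\leq 2M\epsilon_n$, a weight-rounding term $\lesssim M\epsilon_n$, and a Lipschitz term estimated via $\sum_i|u_i|\leq n$ together with the uniform bound from the second paragraph. Combined with $|\log\sigma-\log\widehat\sigma|\leq M\epsilon_n$ this gives $\rho_n((f,\sigma),(\widehat f,\widehat\sigma))\lesssim M\epsilon_n$; absorbing the constant into $M$ and taking the logarithm of the product of cardinalities then yields the stated bound, since every $|\log(M\epsilon_n)|$ contribution is absorbed via $|\log(M\epsilon_n)|\lesssim \log n$ and $H_n\log n\asymp Hn\epsilon_n^2$.
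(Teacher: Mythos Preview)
Your proposal is correct and, in one key respect, more careful than the paper's own argument. The paper proceeds exactly as you describe except that it does \emph{not} reparameterize: it invokes \cref{pro:12} directly to bound
\[
  \|K_{\xi_i,\phi_i}(\cdot-\mu_i)-K_{\widehat\xi_i,\widehat\phi_i}(\cdot-\widehat\mu_i)\|_{2,n}
  \leq C'\bigl(|\xi_i-\widehat\xi_i|_d + |\mu_i-\widehat\mu_i|_d + |\phi_i-\widehat\phi_i|\bigr)
\]
with a constant $C'$ asserted to depend only on $g$. As you correctly anticipated, however, the $\mu$-derivative of $K_{\xi,\phi}(x-\mu)=g(x-\mu)\cos(\xi\cdot(x-\mu)+\phi)$ contains the term $g(x-\mu)\,\xi\sin(\cdots)$ of size $|\xi|_d$, so the Lipschitz bound in \cref{pro:12} as stated is not actually uniform in $\xi$. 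Your reparameterization $\tilde\phi=\phi-\xi\cdot\mu$ absorbs this dependence into the phase, restoring uniform Lipschitz constants at the sole cost of enlarging the phase net from $[0,\pi/2]$ to $[0,2\pi)$, which does not affect the final log-cardinality. The two arguments thus share the same architecture---nets on weights, locations, phases, frequencies and $\sigma$, together with the annular-shell volume bound producing the factor $j_i^{d-1}$---but your extra step closes a gap that the paper's direct appeal to \cref{pro:12} leaves open.
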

\begin{proof}
  The proof is similar to \cref{pro:4b}. Let $\widehat{R}_n$ be a
  $(M\epsilon_n/n)$-net of $[-2S,2S]^d$, $\widehat{\Delta}_n$ be a
  $M\epsilon_n$-net of
  $\Set{(u_1,\dots,u_{H_n}) \in \reals^{H_n}\given \sum_{i=1}^{H_n}|u_i| \leq
    n}$ in the $\ell_1$-distance,
  $\widehat{S}_n := \Set{\sigma > 0 \given \sigma^2 = n^{-2/a_8}(1 +
    M\epsilon_n)^k,\ k\in \mathbb N,\ k\leq n}$, $\widehat{U}_n$ be a
  $(M\epsilon_n/n)$-net of $[0,\pi/2]$, and for all $k=1,\dots,H_n$, let
  $\widehat{V}_{n,k}$ a $(M\epsilon_n/n)$-net of
  $\Set{\xi \in \mathbb R^d \given \sqrt{n}(k-1) \leq |\xi|_{d} < \sqrt{n} k }$.
  Pick $(f,\sigma) \in \Theta_{n,j}$ with
  $f(x) = \sum_{i=1}^{\infty}u_i\, K_{\xi_i,\phi_i}(x - \mu_i)$. Clearly we can
  find $\widehat{u} \in \widehat{\Delta}$ such that
  $\sum_{i\leq H_n}|u_i - \widehat{u}_i| \leq M\epsilon_n$,
  $\widehat{\mu} \in \widehat{R}_n^{H_n}$ such that
  $|\mu_i - \widehat{\mu}_i|_d \leq M\epsilon_n/n$ for all $i=1,\dots, H_n$,
  $\widehat{\phi} \in \widehat{U}_n^{H_n}$ such that
  $|\phi_i - \widehat{\phi}_i| \leq M \epsilon_n / n$ for all $i=1,\dots, H_n$,
  $\widehat{\xi}_i \in \widehat{V}_{n,j_i}$ such that
  $|\xi_i - \widehat{\xi}_i|_d \leq M\epsilon_n/n$ for all $i=1,\dots, H_n$, and
  $\widehat{\sigma} \in \widehat{S}_n$ such that
  $|\log \sigma - \log \widehat{\sigma}| \leq M\epsilon_n$. Let
  $\widehat{f}(x) = \sum_{i\leq H_n}\widehat{u}_i\,
  K_{\widehat{\xi}_i,\widehat{\phi}_{i}}(x - \widehat{\mu}_{i})$ denote the
  function built from the parameters chosen as above ; it follows
  \begin{align*}
    \|f - \widehat{f}\|_{2,n}
    &\leq
      \sum_{i> H_n}|u_i|
      + \sum_{i\leq H_n}|u_i - \widehat{u}_i|
      +\sum_{i\leq H_n}|u_i| \|K_{\xi_i,\phi_i}(\cdot -\mu_i) -
      K_{\widehat{\xi}_i,\widehat{\phi}_i}(\cdot - \widehat{\mu}_i)\|_{2,n}\\
    &\hspace{-1em}\leq
      2M\epsilon_n
      + C'\sum_{i\leq H_n}|u_i||\xi_i - \widehat{\xi}_i|_d
      + C'\sum_{i\leq H_n}|u_i||\mu_i - \widehat{\mu}_i|_d
      + C'\sum_{i\leq H_n}|u_i||\phi_i - \widehat{\phi}_i|\\
    &\hspace{-1em}\leq 2M(1+3C)\epsilon_n,
  \end{align*}
  for a constant $C'>0$ depending only on $g$, because of \cref{pro:11}. Thus a
  $2(1+3C')M\epsilon_n$-net of $\Theta_{n,j}$ in the distance $\rho_n$ can be
  constructed with $(\widehat{f},\widehat{\sigma})$ as above. Recall that
  $\# \widehat{R}_n \leq (4S n/(M\epsilon_n)^d$,
  $\# \widehat{\Delta}_n \leq (n/(M\epsilon_n))^{H_n}$, $\# \widehat{S}_n = n$,
  $\# \widehat{U}_n \leq \pi n/(2M\epsilon_n)$ and
  $\# \widehat{V}_k \leq (n^{3/2}k/(M\epsilon_n) + 1)^d -
  (n^{3/2}(k-1)/(M\epsilon_n) - 1)^d \lesssim (n^{3/2}/(M\epsilon_n))^d
  k^{d-1}$, where we used $u^d - v^d \leq d(u-v)u^{-1}$ for $v > u$. Then the
  end of the proof is identical to \cref{pro:4b}.
\end{proof}

\subsection{Approximation of functions}
\label{sec:appr-funct}

Let $\xi > 0$ and $m,r\geq 1$ be two positive integers. Let define the
approximating kernel $L_{m,\xi} : \reals^d\rightarrow \reals$ by the expression
\begin{equation*}
  L_{m,r}^{\xi}(x):= \lambda_{m,r}^{\xi}g(x)\prod_{i=1}^d\sin^{2r}(m \xi
  x_i)/\sin^{2r}(\xi x_i),
\end{equation*}
where $\lambda_{m,r}^{\xi} > 0$ is chosen so that
$\int_{\mathbb R^d}L_{m,r}^{\xi}(x)\,dx = 1$. Also let $\widetilde{f}_0$ denote
a suitable Whitney extension of $f_0$ from $[-S,S]^d$ to $\mathbb R^d$ (see the
proof of \cref{pro:6}). We may assume that $\widetilde{f}_0$ and all its
derivatives (up to order $\beta$) are zero outside $[-2S,2S]^d$. If it is not
the case, it suffices to multiply $\widetilde{f}_0$ by a smooth function that
equal $1$ on $[-S,S]^d$ and $0$ outside $[-2S,2S]^d$ (for instance, think about
the convolution of a bump function with a proper indicator set function).

In order to achieve good order of approximation of $f_0$ when $\beta$ is large,
we construct a transformation of $\widetilde{f}_0$ as follows.  In the sequel we
let $p$ be the largest integer strictly smaller than $\beta$. For all
multi-index $\alpha \in \mathbb N^d$, we define
$\mathfrak{m}_{\alpha}^{m,r,\xi} := \int_{\mathbb
  R^d}x^{\alpha}L_{m,r}^{\xi}(x)\,dx$. By definition of $L_{m,r}^{\xi}$, the
$\mathfrak{m}_{\alpha}^{m,r,\xi}$'s are always finite. Then we define
\begin{equation*}
  f_{\beta} \equiv f_{\beta}^{m,r,\xi} := \widetilde{f}_0 - \sum_{1 \leq
    |\alpha| \leq p} d_{\alpha} \mathfrak{m}_{\alpha}^{m,r,\xi}D^{\alpha}
  \widetilde{f}_0,
\end{equation*}
where the coefficients $(d_{\alpha})$ are defined in the same fashion as
\cref{eq:25}, with obvious modifications.

\begin{proposition}
  \label{pro:9}
  Let $m,r \geq 1$ be integers. For any $\beta > 0$ and any function
  $f_0 \in \mathcal{C}^{\beta}[-S,S]^d$ there is a constant $M_{\beta} > 0$ such
  that
  $|L_{m,\xi}^{r}*f_{\beta}(x) - f_0(x)| \leq M_{\beta}(\log m / m)^{\beta}$ for
  all $x\in [-S,S]^d$ if $2r \geq p + 1$ and $\xi = K_0(\log m)^{-1}$ for a
  constant $K_0$ depending only on $g$, $\beta$ and $r$.
\end{proposition}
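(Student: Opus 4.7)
The plan is to mirror the proof of \cref{pro:6} essentially verbatim, with the rescaled Gaussian kernel $h^{-d}K_{hI}$ replaced by the trigonometric-window kernel $L_{m,r}^{\xi}$. Using $\int L_{m,r}^{\xi}=1$, I would write
\[
  L_{m,r}^{\xi}\ast f_{\beta}(x) - f_0(x) = \int L_{m,r}^{\xi}(y)\,[f_{\beta}(x-y)-f_0(x)]\,dy,
\]
expand $f_{\beta}(x-y)$ by multivariate Taylor's theorem around $x$ up to order $p$ with remainder $R_p(x,y)$, and substitute. This produces the identity
\[
  L_{m,r}^{\xi}\ast f_{\beta}(x) = \sum_{|\alpha|\leq p}\frac{(-1)^{|\alpha|}}{\alpha!}\mathfrak{m}_{\alpha}^{m,r,\xi}\, D^{\alpha}f_{\beta}(x) + \int L_{m,r}^{\xi}(y)\,R_p(x,y)\,dy.
\]
The recursion \eqref{eq:25} defining the $d_{\alpha}$'s (with the obvious replacement of $\mathfrak{m}_{\alpha}^{h}$ by $\mathfrak{m}_{\alpha}^{m,r,\xi}$) is purely algebraic in the moments, which are all finite since $g$ is Schwartz; the same argument of \citet{Shen2013} Lemma~2 invoked in \cref{pro:6} then forces the finite sum to equal $f_0(x)$ for every $x\in[-S,S]^d$, using that the Whitney extension $\widetilde{f}_0$ coincides with $f_0$ together with all its derivatives of order at most $\beta$ on $[-S,S]^d$.

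Everything therefore reduces to estimating the remainder. By the $\beta$-Hölder regularity of $\widetilde{f}_0$, one has $|R_p(x,y)|\lesssim |y|_d^{\beta}$ uniformly in $x\in[-S,S]^d$, so the conclusion will follow from the tail bound
\[
  \int_{\mathbb R^d}|L_{m,r}^{\xi}(y)|\,|y|_d^{\beta}\,dy \;\lesssim\; \Bigl(\tfrac{\log m}{m}\Bigr)^{\beta}.
\]
I expect this to be the main obstacle, because the denominator $\prod_i \sin^{2r}(\xi y_i)$ creates secondary spikes of height $\asymp m^{2r}$ at every point of the lattice $(\pi/\xi)\mathbb Z^d\setminus\{0\}$, so $L_{m,r}^{\xi}$ is far from being uniformly small outside a neighbourhood of the origin.

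To control the integral I would split it according to whether $|y_i|\leq\pi/(2\xi)$ for every coordinate or not. On the central cube one has $|\sin(\xi y_i)|\asymp|\xi y_i|$, so $L_{m,r}^{\xi}(y)$ is comparable there to $\lambda_{m,r}^{\xi}g(y)\prod_i\sin^{2r}(m\xi y_i)/(\xi y_i)^{2r}$, and the change of variables $u_i = m\xi y_i$ converts the central contribution into $\lambda_{m,r}^{\xi}\,g(0)\,m^{2rd}\,(m\xi)^{-d-\beta}$ times the absolutely convergent integral $\int|u|_d^{\beta}\prod_i\sin^{2r}(u_i)/u_i^{2r}\,du$, the convergence being precisely what the condition $2r\geq p+1$ is designed to guarantee. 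Combined with the normalization $\lambda_{m,r}^{\xi}\asymp(\xi/m^{2r-1})^d$ obtained by setting the central-peak mass to one, the central part contributes $O((m\xi)^{-\beta})=O((\log m/m)^{\beta})$. For the complementary region, the nearest secondary spike sits at $|y|_d\asymp\pi/\xi\asymp(\log m)/K_0$, where the Schwartz tail $|g(y)|\lesssim\exp(-C_0|y|_d^{\tau})$ with $\tau>1$ gives $g(y)\lesssim\exp\{-C_0(\pi/K_0)^{\tau}(\log m)^{\tau}\}$, which is $o(m^{-N})$ for every $N$; choosing $K_0$ small enough in terms of $g$, $\beta$ and $r$ absorbs all periodic copies of the spike into a negligible $o((\log m/m)^{\beta})$ term. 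Combining the two pieces yields the claim.
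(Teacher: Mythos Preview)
Your approach is essentially the same as the paper's: both reduce everything, via the algebraic cancellation built into the $d_\alpha$ recursion (Shen's Lemma~2), to controlling the absolute moment $\int_{\mathbb R^d}|y|_d^{\beta}\,L_{m,r}^{\xi}(y)\,dy$, and both estimate $\lambda_{m,r}^{\xi}\asymp(\xi/m^{2r-1})^d$ from the central peak. The only real difference is in how that moment integral is cut up. The paper works coordinate by coordinate and splits each axis into three pieces, $D_1=(-1/(\xi m),1/(\xi m))$, $D_2=[1/(\xi m),\pi/\xi]$, $D_3=[\pi/\xi,\infty)$, keeping the factor $e^{-C_0|u|^\tau/d}$ in the integrand throughout; you instead split into the central cube $\{|y_i|\le\pi/(2\xi)\}$ and its complement, discard $g$ on the central cube, and rescale. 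Your treatment of the complement is in fact a little cleaner than the paper's Markov-inequality step, since with $\tau>1$ the factor $\exp\{-C_0(\pi/(2\xi))^{\tau}\}=\exp\{-c(\log m)^{\tau}\}$ already kills every polynomial in $m$ without having to tune $K_0$.

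There is one genuine slip. You assert that $\int_{\mathbb R^d}|u|_d^{\beta}\prod_i\sin^{2r}(u_i)/u_i^{2r}\,du$ is absolutely convergent ``precisely'' because $2r\ge p+1$. That integral converges iff $2r>\beta+1$, which is strictly stronger than $2r\ge p+1$ (take $\beta=1.9$, $p=1$, $2r=2$: the one–dimensional tail is $\int^\infty u^{-0.1}\,du$). If only $2r\ge p+1$ holds and $2r-1<\beta$, your rescaled integral over $|u_i|\le m\pi/2$ grows like $m^{\beta-2r+1}$ and your central contribution becomes $(\log m)^{\beta}/m^{2r-1}$, not $(\log m/m)^{\beta}$. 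The paper's three-piece split does not escape this either; its proof actually invokes $2r>p+1$ in the case $\beta>1$, and Proposition~2 later assumes $r\ge(\beta+1)/2$, i.e.\ $2r\ge\beta+1$. So the discrepancy is a typo in the hypothesis of the statement rather than a defect in your strategy; with $2r>\beta+1$ your argument goes through as written.
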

\begin{proof}
  First assume $0 < \beta\leq 1$. By assumptions on $f_0$, there is $M > 0$ such
  that for all $x,y \in \mathbb R^d$ we have
  $|\widetilde{f}_0(x)-\widetilde{f}_0(y)| \leq M|x-y|_d^{\beta}$. Then,
  \begin{align*}
    \left|\widetilde{f}_0(x) - L_{m,r}^{\xi}* \widetilde{f}_0 (x)\right|
    &\leq\int_{\reals^d}\left|\widetilde{f}_0(x) -
      \widetilde{f}_0(y)\right| |L_{m,r}^{\xi}(x-y)|\,dy\\
    &\leq M\int_{\reals^d}|x - y|_d^{\beta}|L_{m,r}^{\xi}(x-y)|\,dy.
  \end{align*}
  Remark that for any $\tau > 0$ and all $u\in \mathbb R^d$ we have
  $\sum_{i=1}^d|u_i|^{\tau} \leq d \max_{i=1,\dots,d}|u_i|^{\tau} \leq
  d(\sum_{i=1}^d|u_i|^2)^{\tau/2}$ and
  $|x-y|_d^{\beta} = (\sum_{i=1}^d|x_i-y_i|^2)^{\beta/2} \leq
  d^{\beta/2}\max_{i=1,\dots,d}|x_i - y_i|^{\beta} \leq
  d^{\beta/2}\sum_{i=1}^d|x_i - y_i|^{\beta}$. Then, because
  $|g(x)| \lesssim \exp(-C_0|x|^{\tau}_d)$,
  \begin{align}
    \notag
    &\left|\widetilde{f}_0(x) - L_{m,r}^{\xi}* \widetilde{f}_0 (x)\right|\\
    \notag
    &\
      \lesssim \lambda_{m,r}^{\xi}\sum_{i=1}^d\int_{\reals^d}|u_i|^{\beta}
      \exp\left(-C_0d^{-1} \textstyle\sum_{j=1}^d|u_j|^{\tau}\right)
      \prod_{j=1}^d\frac{\sin^{2r}(m\xi u_j)}{\sin^{2r}(\xi u_j)}\, du\\
    \label{eq:3}
    &\ \lesssim
      \lambda_{m,r}^{\xi} \left(\int_{\mathbb R}
      |u|^{\beta}\e^{-C_0|u|^{\tau}/d} \frac{\sin^{2r}(m\xi u)}{\sin^{2r}(\xi
      u)}\,du\right) \left( \int_{\mathbb R}
      \e^{-C_0|u|^{\tau}/d}\frac{\sin^{2r}(m\xi u)}{\sin^{2r}(\xi u)}\,du
      \right)^{d-1}.
  \end{align}
  We now bound the first integral of the rhs of \cref{eq:3}. Let split the
  domain into three parts : $D_1 := (-1/(\xi m),1/(\xi m))$,
  $D_2 := [-1/(\xi m), -\pi/\xi] \cup [1/(\xi m), \pi/\xi]$ and
  $D_3 := \mathbb R\backslash (D_1\cup D_2)$. On $D_1$ and $D_3$ we always have
  $\sin^2(m\xi u) / \sin^2(\xi u) \lesssim m^2$, whereas on $D_2$ it holds
  $\sin^2(m \xi u) / \sin^2(\xi u) \lesssim 1/(\xi x)^2$. Therefore,
  \begin{align*}
    &\int_{\mathbb R} |u|^{\beta}\e^{-|u|^{\tau}/d}
      \frac{\sin^2(m\xi u)}{\sin^2(\xi u)}\,du\\
    &\ \lesssim
      m^{2r}\int_{D_1}|u|^{\beta}\,du
      + \xi^{-2r}\int_{D_2}|u|^{\beta-2r}\,du
      + m^{2r}\int_{D_3}|u|^{\beta}\e^{-C_0|u|^{\tau}/d}\,du =: I_1 + I_2 + I_3.
  \end{align*}
  The bounds $I_1 \lesssim m^{-\beta + (2r - 1)}\xi^{-(\beta+1)}$ and
  $I_2 \lesssim \xi^{-(\beta+1)}(1 + m ^{-\beta+(2r-1)})$ are obvious. Now we
  bound $I_3$. By Markov's inequality, for any $t < C_{0}/d$, we have
  \begin{equation*}
    \int_{\pi/\xi}^{\infty}u^{\beta}\exp(-C_0u^{\tau}/d)
    \leq \e^{-\pi/\xi}\int_0^{\infty}u^{\beta}\exp(-C_0u^{\tau}/d+ ut)\, du.
  \end{equation*}
  Now it is clear that $I_3 \lesssim m^{2r}\exp(-\pi/\xi)$ since by assumption
  $\tau > 1$ and we can choose $t < C_0/d$. It follows
  $I_3 \lesssim \xi^{-(\beta+1)}$ if $\xi = K_0(\log m)^{-1}$ for a suitable
  constant $K_0 > 0$ depending only on $g$, $\beta$ and $r$. The same reasoning
  applies to the second integral of the rhs of \cref{eq:3}, yielding the bound
  \begin{equation}
    \label{eq:10}
    \left|\widetilde{f}_0(x) - L_{m,\xi}* \widetilde{f}_0 (x)\right|
    \lesssim \lambda_{m,r}^{\xi} m^{-\beta + d(2r-1)}(\log m)^{\beta + d},
  \end{equation}
  whenever $\xi = K_0(\log m)^{-1}$. Hence, it remains to bound
  $\lambda_{m,r}^{\xi}$. By assumption, we have $g(x) \geq 0$ for all
  $x\in \mathbb R^d$ and a constant $C>0$ such that $g(x) > C$ on a set
  $E \subseteq [-\pi,\pi]^d$~; thus
  \begin{multline*}
    \frac{1}{\lambda_{m,r}^{\xi}} \geq \int_{E}g(x)
    \prod_{i=1}^d\frac{\sin^{2r}(m\xi x_i)}{\sin^{2r}(\xi x_i)}\,dx \gtrsim C
    m^{2dr}\int_E \prod_{i=1}^d \frac{\sin^{2r}(m\xi x_i)}{(m\xi x_i)^{2r}}\,
    dx\\
    \gtrsim
    \frac{m^{d(2r-1)}}{\xi^d}\int_{E'}\prod_{i=1}^d\frac{\sin^2(u_i)}{u_i^2}\,du
    \gtrsim \frac{m^{d(2r-1)}}{\xi^d},
  \end{multline*}
  where $E' := \Set{m\xi x \given x \in E}$ has non-null Lebesgue measure by
  assumption. Combining the last result with \cref{eq:10}, we get the estimate
  $|\widetilde{f}_0(x) - L_{m,\xi}*\widetilde{f}_0(x)| \lesssim m^{-\beta}(\log
  m)^{\beta}$ for all $x\in \mathbb R^d$ provided $\xi \leq K_0(\log m)^{-1}$.

  \par Now assume that $\beta > 1$.  Acting as in the previous paragraph, we can
  have $\mathfrak{m}_{\alpha}^{m,r,\xi} \leq m^{-|\alpha|}(\log m)^{|\alpha|}$
  for all $|\alpha|\leq p$, provided $2r > p + 1$ and $\xi = K_0'(\log m)^{-1}$
  for a suitable constant $K_0' > 0$. Then the proof is identical to
  \citet[lemma~2]{Shen2013}.
\end{proof}

\begin{proposition}
  \label{pro:2}
  Let $m\geq 1, r\geq (\beta +1)/2$ be integers and $\xi = K_0(\log m)^{-1}$,
  with $K_0$ as in \cref{pro:9}. There exists a discrete mixture
  $f(x) = \sum_{i=1}^N \alpha_i\, K_{\xi_i,\phi_{i}}(x-\mu_i)$ with
  $N \lesssim (m \log m)^d$ and for all $i=1,\dots,N$ : $\mu_i \in [-2S,2S]^d$,
  $\xi_i \in [0, 2rK_0m/\log m]^d$, $\phi_i \in [0,\pi/2]$~; such that
  $|f(x) - f_0(x)| \lesssim (\log m / m)^{\beta}$ for all $x\in
  [-S,S]^d$. Moreover $\sum_{i=1}^N|\alpha_i| \lesssim 1$, and for any $i\ne j$
  it holds $|\xi_i - \xi_j|_d \geq 2(\log m /m)^{\beta}$, $|\mu_i - \mu_j|_d
  \geq 2(\log m /m )^{\beta}$ and
  $|\phi_i - \phi_j| \geq 2(\log m /m )^{\beta}$.
\end{proposition}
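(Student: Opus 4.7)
The plan is to discretize the continuous approximation $L_{m,r}^\xi*f_\beta$ from \cref{pro:9} in two successive steps: first expand $L_{m,r}^\xi$ spectrally into finitely many modulation kernels, then apply a Tchakaloff-type quadrature in $\mu$ exactly as in the proof of \cref{pro:13}.

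For the spectral expansion, each factor $\sin^{2r}(m\xi x_i)/\sin^{2r}(\xi x_i)$ is an even real trigonometric polynomial of degree at most $2r(m-1)$ in $\xi x_i$, hence equals $\sum_{|k_i|\leq r(m-1)}c_{k_i}\cos(2 k_i\xi x_i)$. Taking the product over $i=1,\dots,d$ and applying the identity $\prod_{i}\cos(a_i)=2^{-d}\sum_{\epsilon\in\{-1,1\}^d}\cos(\epsilon\cdot a)$ gives
\begin{equation*}
L_{m,r}^\xi(x)=g(x)\sum_{\nu\in\Lambda}d_\nu\cos(\nu\cdot x),
\end{equation*}
where $\Lambda\subset(2\xi\mathbb{Z})^d$ has cardinality $|\Lambda|\lesssim m^d$ and $|\nu|_\infty\leq 2r(m-1)\xi\leq 2rK_0 m/\log m$ for every $\nu\in\Lambda$. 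The evenness of cosine allows me to restrict to $\nu\in[0,2rK_0 m/\log m]^d$. Substituting into the convolution gives $L_{m,r}^\xi*f_\beta(x)=\sum_{\nu\in\Lambda}d_\nu\int K_{\nu,0}(x-\mu)f_\beta(\mu)\,d\mu$.

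For the quadrature, for each $\nu\in\Lambda$ I discretize $f_\beta(\mu)\,d\mu$ on a partition of $[-2S,2S]^d$ into cubes of diameter $O(\log m/m)$ using Tchakaloff's theorem, producing a signed measure with $O((\log m)^d)$ atoms per cube that matches polynomial moments up to order $k\asymp\log m$. The per-frequency error is controlled by the Taylor remainder of $\mu\mapsto K_{\nu,0}(x-\mu)$ around cube centers; derivatives of this map are bounded by products of $|\nu|^{|\alpha|}$ and $|D^\alpha g(0)|$, and the assumption $\sup_{|\alpha|=k}|D^\alpha g(0)|\lesssim \e^{\gamma k\log k}$ with $\gamma<1$ combined with Stirling's formula makes the summed error $\lesssim(\log m/m)^\beta$, in the same spirit as the bound driving the choice $k\asymp(\log h^{-1})^\zeta$ in \cref{pro:13}. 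This gives a mixture with $N\lesssim|\Lambda|\cdot(\log m)^d\lesssim(m\log m)^d$ atoms of the required form, all initially with $\phi_i=0$.

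Finally I enforce the separation conditions by parameter perturbations. Adjacent elements of $\Lambda$ are already at distance $2\xi=2K_0/\log m\gg 2(\log m/m)^\beta$, so frequency separation is automatic; locations and phases can be perturbed by at most $(\log m/m)^\beta$ using the joint Lipschitz continuity of $(\xi,\mu,\phi)\mapsto K_{\xi,\phi}(x-\mu)$ (the analog of \cref{pro:11}), so that the supremum-norm cost $O((\log m/m)^\beta)\cdot\sum_i|\alpha_i|$ stays within the error budget. The main obstacle is obtaining the global mass bound $\sum_i|\alpha_i|\lesssim 1$: unlike in \cref{pro:13} where a bound of the form $h^{-d}$ was enough, here a constant bound is needed, and this is delicate because the Fourier coefficients $d_\nu$ themselves have nontrivial growth. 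The remedy is to track the cancellations between the signed $d_\nu$'s jointly with the Jordan decomposition of the Tchakaloff measures, ultimately relying on the normalization $\int L_{m,r}^\xi=1$ and the $L^1$-boundedness of $f_\beta$ inherited from $\widetilde{f}_0\in\mathcal{C}^\beta(\mathbb R^d)$ with compact support.
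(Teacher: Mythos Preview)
Your overall strategy — spectral expansion of $L_{m,r}^\xi$ into modulation kernels followed by Tchakaloff quadrature — is the same as the paper's, but you are missing the one trick that makes it work, and without it both your atom count and your error bound break down.

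The issue is that you discretize $\int K_{\nu,0}(x-\mu)f_\beta(\mu)\,d\mu$ directly. The $\mu$-derivatives of $K_{\nu,0}(x-\mu)=g(x-\mu)\cos(\nu\cdot(x-\mu))$ pick up factors of $|\nu|^{|\alpha|}$ with $|\nu|$ as large as $rm/\log m$. If you do a \emph{global} Tchakaloff on $[-2S,2S]^d$ with degree $q\asymp\log m$ (which is what your count $(\log m)^d$ atoms per frequency requires), the Taylor remainder contains a factor $|\nu|^q\asymp (m/\log m)^{\log m}=\e^{(\log m)^2(1+o(1))}$, which swamps the $1/q!$ and blows up. If instead you partition into cubes of diameter $\log m/m$ as you wrote, the factors $|\nu|^q$ and $(\log m/m)^q$ do cancel, but now you have $\sim(m/\log m)^d$ cubes, each contributing $(\log m)^d$ atoms, so $\sim m^d$ atoms \emph{per frequency} and $\sim m^{2d}$ in total — far more than the required $(m\log m)^d$. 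Your sentence ``$N\lesssim|\Lambda|\cdot(\log m)^d$'' silently drops the cube count.

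The paper's remedy is to apply the addition formula $\cos(\nu\cdot(x-\mu))=\cos(\nu\cdot x)\cos(\nu\cdot\mu)+\sin(\nu\cdot x)\sin(\nu\cdot\mu)$ \emph{before} discretizing. This pulls the $x$-modulation outside the integral and absorbs the $\mu$-modulation into a bounded weight $h_k(\mu):=f_\beta(\mu)\cos(\nu\cdot\mu)$ (respectively $\sin$). The function to be Taylor-expanded is then just $y\mapsto g(x-y)$, whose derivatives are controlled by the hypothesis on $D^\alpha g(0)$ alone, with no frequency factors. A single global Tchakaloff quadrature of degree $q\asymp\log m$ then suffices, giving $(\log m)^d$ atoms per frequency and the correct total. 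This split is also what produces the nontrivial phases $\phi_i\in\{0,\pi/2\}$ in the final mixture (yours all have $\phi_i=0$).

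Finally, your treatment of $\sum_i|\alpha_i|\lesssim 1$ is not a proof. ``Tracking cancellations'' and invoking $\int L_{m,r}^\xi=1$ does not give a bound on $\sum_\nu|d_\nu|$, which is what you need once the Tchakaloff masses satisfy $\sum_l|p_{k,l}|\leq\|f_\beta\|_1$. The paper obtains this by an explicit bookkeeping of the Fej\'er-type coefficients together with the estimate $\lambda_{m,r}^\xi\lesssim \xi^d m^{-d(2r-1)}$ from the proof of \cref{pro:9}; you should reproduce that computation rather than appeal to unspecified cancellation.
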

\begin{proof}
  We rewrite $L_{m,r}^{\xi}$ in a more convenient form for the sequel. Let
  $a_0 := 1$ and $a_{k} = 2(1-k/m)$ for all $k=1,\dots,m - 1$. Then first step
  is to notice that
  \begin{equation*}
    L_{m,r}^{\xi}(x)
    = m^{dr} \lambda_{m,r}^{\xi}\, g(x)
      \prod_{i=1}^d\left[\sum_{k=0}^{m-1}a_k\cos(2\xi k x_i) \right]^r.
  \end{equation*}
  From here, letting $\mathcal{I}_r := \Set{0,\dots,m-1}^{r}$ and
  $\mathcal{S} = \Set{-1,1}$,
  \begin{equation*}
    L_{m,r}^{\xi}(x) = m^{dr}\lambda_{m,r}^{\xi}\, g(x) \prod_{i=1}^d
    \left[\sum_{k\in \mathcal{I}_r}a_k' 2^{-r}\sum_{e\in
        \mathcal{S}^r}\cos\left(2\xi x_i\textstyle \sum_{j=1}^re_j k_j\right)
    \right],
  \end{equation*}
  where $a_k' := a_{k_1}\dots a_{k_r}$, and because
  $\prod_{j=1}^r\cos(2\xi k_j x_i) = 2^{-r}\sum_{e\in
    \mathcal{S}^r}\cos(2\xi\sum_{j=1}^r e_jk_jx_i)$. Notice that
  $|a_k'|2^{-r} \leq 1$ for all $k \in \mathcal{I}_r$, and that
  $2|\sum_{j=1}^re_jk_j|$ can take at most $1+ r(m-1)$ values~; we denote these
  unique values $\omega_j$ with $j \in \mathcal{J} :=
  \Set{0,\dots,r(m-1)}$. Then we can rewrite,
  \begin{align*}
    L_{m,r}^{\xi}(x) = m^{dr}\lambda_{m,r}^{\xi}\, g(x) \prod_{i=1}^d
    \left[\sum_{k\in \mathcal{J}}a_k'' \cos(\xi \omega_j x_i ) \right],
  \end{align*}
  where the coefficients $a_k''$ satisfy
  $|a_k''| \leq 2 \#( \mathcal{I}_r\times \mathcal{S}^r) \leq 2(2m)^r$. Finally,
  for all $k \in \mathcal{J}^d$ letting
  $b_k := 2^{-d}a''_{k_1}\dots a''_{k_{d}}$ and $\omega_{k,i} := \omega_{k_i}$,
  with the same arguments as previously,
  \begin{equation*}
    L_{m,r}^{\xi}(x)
    = m^{dr}\lambda_{m,r}^{\xi}\, g(x) \sum_{k\in \mathcal{J}^d}\sum_{e\in
      \mathcal{S}^d} b_k \cos\left(
      \xi \textstyle\sum_{i=1}^d \omega_{k,i}e_ix_i \right),
  \end{equation*}
  where $|b_k| \leq (2m)^{dr}$ for all $k \in \mathcal{J}^d$. Therefore,
  \begin{align*}
    &(m^{dr}\lambda_{m,r}^{\xi})^{-1}L_{m,r}^{\xi}*f_{\beta}(x)\\
    &\qquad = \sum_{k\in \mathcal{J}^d}\sum_{e\in
      \mathcal{S}^d} b_k \int_{\mathbb R^d} f_{\beta}(y) g(x-y)
      \cos\left(\xi
      \textstyle\sum_{i=1}^d\omega_{k,i}e_{i}(x_i-y_i)\right)\,dy\\
    &\qquad = \sum_{k\in \mathcal{J}^d}\sum_{e\in
      \mathcal{S}^d}
      b_k \cos\left( \xi \textstyle \sum_{i=1}^d\omega_{k,i}e_ix_i \right)
      \int_{\mathbb R^d}f_{\beta}(y)g(x-y)
      \cos\left( \xi \textstyle \sum_{i=1}^d\omega_{k,i}e_iy_i \right)
      \,dy\\
    &\qquad\quad+
      \sum_{k\in \mathcal{J}^d}\sum_{e\in \mathcal{S}^d}
      b_k \sin\left( \xi \textstyle \sum_{i=1}^d\omega_{k,i}e_ix_i \right)
      \int_{\mathbb R^d}f_{\beta}(y)g(x-y)
      \sin\left( \xi \textstyle \sum_{i=1}^d\omega_{k,i}e_iy_i \right)
      \,dy.
  \end{align*}
  We finish the proof by discretizing the integrals in the last
  equation. Obviously the proof are identical for both integrals, hence we only
  consider the first one. To ease notations, we set
  $h_{k}(x) := f_{\beta}(x)\prod_{i=1}^d\cos(\xi
  \sum_{i=1}^d\omega_{k,i}e_ix_i)$. For any integer $q \geq 1$, proceed as in
  the proof of \cref{pro:13} to find a signed measure
  $P_{k,q} =: \sum_{l\in \mathcal{L}} p_{k,l}\delta_{x_{k,l}}$ such that
  $\int_{[-2S,2S]^d}R(x)\, dP_{k,q}(x) = \int_{[-2S,2S]^d}R(x)\,h_{k}(x)dx$ for
  all polynomials $R(x)$ of degree $\leq q$, with
  $\# \mathcal{L} \leq (q + d)!/(q! d!)$ and
  $\sum_{l\in \mathcal{L}}|p_{k,l}| = \int_{[-2S,2S]^d}|h_{k}(x)|\,dx \leq M$
  for a positive constant $M$ (recall that by construction of $f_{\beta}$, we
  have $\|f_{\beta}\|_{\infty} < +\infty$, and
  $\supp f_{\beta} \subseteq [-2S,2S]^d$). Then for any $x\in \reals^d$,
  \begin{multline}
    \label{eq:8}
    \left| \int_{\mathbb R^d}h_{k}(y)g(x-y)\,dy - \int_{[-2S,2S]^d}
      g(x-y)\,dP_{k,q}(y)\right|\\
    \leq\sum_{|\alpha| \leq
      r}\frac{|D^{\alpha}g(0)|}{\alpha!}\left|\int_{[-2S,2S]^d}
      (x-y)^{\alpha}\,h_{k}(y)dy -
      \int_{[-2S,2S]^d}(x-y)^{\alpha}\,dP_{k,q}(y)
    \right|\\
    +\int_{[-2S,2S]^d}|R_q(y)|\,|h_{k}(y)|dy +
    \int_{[-2S,2S]^d}|R_q(y)|\,d|P_{k,q}|(y),
  \end{multline}
  where $|R_q(y)| \leq \sup_{|\alpha|=q}|D^{\alpha}g(0)| |y|_d^q/q!$. The first
  term of the rhs of \cref{eq:8} is null by construction of $P_{k,q}$. As in the
  proof of \cref{pro:13}, the two last terms of \cref{eq:8} are bounded by a
  constant multiple of
  \begin{gather*}
    \exp\left\{-(1-\gamma)q\log q + q(1+\log(2\sqrt{d}S))\right\}.
  \end{gather*}
  Then the error of approximating the integrals is $o(m^{-\beta})$ if
  $q = K_1\log m$ for a suitable constant $K_1 > 0$ depending only on $\beta$
  and $\gamma$. Since for $\xi = K_0'(\log m)^{-1}$ we have,
  \begin{equation*}
    m^{dr}\lambda_{m,r}^{\xi}\sum_{k\in \mathcal{J}^d}\sum_{e\in
      \mathcal{S}^d}|b_k|
    \lesssim m^{dr}\times m^{-d(2r-1)} \times (\log m)^{-d}\times
    \#\mathcal{J}^d\times m^{dr} \lesssim (\log m)^{-d},
  \end{equation*}
  the error of approximating $L_{m,r}^{\xi}* f_{\beta}$ by the discretized
  version does not exceed $o(m^{-\beta})$ when $q=K_1 \log m$. The conclusion of
  the proposition follows from elementary manipulation of trigonometric
  functions and because $\#\mathcal{L} \lesssim q^d \lesssim (\log m)^d$.

  It remains to prove the separation between the atoms of the mixing measure,
  but this follows from \cref{pro:12} with the same argument as in
  \cref{pro:13}.
\end{proof}

\subsection{Kullback-Leibler condition}
\label{sec:kullb-leibl-cond}

\begin{lemma}
  \label{lem:2b}
  Let $f_0\in \mathcal{C}^{\beta}[-S,S]^d$. Then there exists a constant $C>0$,
  not depending on $n$, such that
  $\Pi(K_n(\theta_0,\epsilon_n)) \gtrsim \exp(-n\epsilon_n^2)$ for
  $\epsilon_{n}^2 = Cn^{-2\beta/(2\beta +d)}(\log n)^{2\beta(2d+1)/(2\beta+d)}$.
\end{lemma}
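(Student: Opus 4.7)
The plan is to mirror the strategy of \cref{lem:2}, replacing the location-scale approximation \cref{pro:13} by its location-modulation analog \cref{pro:2}, and adjusting the construction of cells around the atoms of the approximating mixture. As in \cref{lem:2}, I would start from the inclusion \cref{eq:2b}, which reduces the problem to lower-bounding the prior probability of the event $\{\|f - f_0\|_\infty^2 \leq C_0 \epsilon_n^2\} \cap \{\sigma_0 \leq \sigma \leq \sigma_0(1+C_0\epsilon_n^2)\}$; the $\sigma$-piece contributes $\gtrsim \epsilon_n^{2 a_9}$ by \cref{eq:8b} and can be absorbed into the final $\exp(-n\epsilon_n^2)$.

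For the $f$-piece, I would apply \cref{pro:2} with an integer $m$ to be chosen and with $r \geq (\beta+1)/2$, producing $f_h(x) = \sum_{i=1}^N \alpha_i K_{\xi_i,\phi_i}(x - \mu_i)$ such that $\|f_h - f_0\|_\infty \lesssim (\log m / m)^\beta$, with $N \lesssim (m\log m)^d$, $\sum_i |\alpha_i| \lesssim 1$, and with atoms separated by at least $2(\log m/m)^\beta$ in each of the variables $\xi, \mu, \phi$. Setting $\epsilon_n^2 \asymp (\log m /m)^{2\beta}$, I would then construct, around each atom $(\xi_i,\mu_i,\phi_i)$, disjoint product cells $V_i := \Xi_i \times U_i \times \Phi_i$, where $\Xi_i, U_i, \Phi_i$ are balls of radius $\delta \asymp (\log m / m)^{\beta + d + 1}$ (small enough that the Lipschitz bound from \cref{pro:11} controls the kernel variation on each cell). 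Let $V^c$ be the complement inside $\mathbb R^d\times[-2S,2S]^d\times[0,\pi/2]$, and define
\begin{equation*}
  \mathcal Q := \Set*{Q \given |Q(V_i) - \alpha_i| \leq (\log m/m)^{\beta}/N,\ i=1,\dots,N,\ \ |Q|(V^c) \leq (\log m/m)^\beta}.
\end{equation*}
For any $Q\in \mathcal Q$, splitting the integral over the cells and $V^c$ and using \cref{pro:11}, I would verify $\|\int K_{\xi,\phi}(\cdot -\mu)\,Q(d\xi d\mu d\phi) - f_h\|_\infty \lesssim (\log m/m)^\beta$, hence $\|f - f_0\|_\infty \lesssim \epsilon_n$ uniformly on $\mathcal Q$.

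To lower-bound $\Pi_*(\mathcal Q)$, I would invoke \cref{pro:5} from the appendix (the same tool used in \cref{lem:2}), which gives
\begin{equation*}
  \Pi_*(\mathcal Q) \gtrsim \prod_{i=1}^{N+1}\frac{((\log m/m)^\beta /N)\, e^{-(3+\eta)|\alpha_i|}}{\Gamma(\alpha F(V_i))},
\end{equation*}
after subdividing $V^c$ (if necessary) into finitely many subsets of comparable measure. The base-measure mass $F(V_i) = F_\xi(\Xi_i) F_\mu(U_i) F_\phi(\Phi_i)$ is bounded below using \cref{eq:15}, \cref{eq:16} and the assumption $F_\phi(|\phi-\phi_0|\leq t)\geq b_{10}t^{a_{10}}$, yielding
\begin{equation*}
  F(V_i) \gtrsim |\xi_i|_d^{-a_{12}}\delta^{a_{13}+a_1+a_{10}} \gtrsim (m/\log m)^{-a_{12}}\delta^{a_{13}+a_1+a_{10}},
\end{equation*}
since $|\xi_i|_d \lesssim m/\log m$. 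Using $\Gamma(x)\lesssim x^{-1}$ for small $x$ and $\sum |\alpha_i|\lesssim 1$, the log of the above product is $\gtrsim -N \log(1/F(V_i)) \gtrsim -(m\log m)^d \cdot \log m \cdot (2d+1 \text{ terms in the product})$, which after simplification is $\gtrsim -K_4 m^d (\log m)^{2d+1}$ for some constant $K_4$.

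Finally, I would choose $m$ so that the two requirements $\epsilon_n^2 \asymp (\log m/m)^{2\beta}$ and $m^d(\log m)^{2d+1} \asymp n\epsilon_n^2$ are both met; solving, this gives $m \asymp n^{1/(2\beta+d)}(\log n)^{(2d+1-2\beta)/(2\beta+d)}$ and
\begin{equation*}
 \epsilon_n^2 \asymp n^{-2\beta/(2\beta+d)}(\log n)^{2\beta(2d+1)/(2\beta+d)},
\end{equation*}
matching the statement. The main obstacle is to track the logarithmic factors carefully in the lower bound for $F(V_i)$: the $(\log m)^d$ factor arising from the $d$ copies of $|\xi_i|_d^{-a_{12}}$ bounds (one per coordinate in the approximation from \cref{pro:2}) is what produces the $(2d+1)$ (instead of $d+1$) exponent on $\log n$ in the final rate, and this needs to be done without losing any polynomial factor along the way.
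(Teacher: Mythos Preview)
Your plan is essentially the paper's own proof: approximate via \cref{pro:2}, build disjoint product cells around the atoms, apply \cref{pro:5}, and balance $m$ against $n$. Two bookkeeping points need correcting, however. First, the Lipschitz control on $K_{\xi,\phi}$ is \cref{pro:12}, not \cref{pro:11} (the latter is for the location-scale kernel $K_A$). Second, and more substantively, your intermediate bound on $-\log\Pi_*(\mathcal Q)$ is miscomputed: you correctly write $N\log(1/F(V_i)) \lesssim (m\log m)^d\cdot \log m$, but this equals $m^d(\log m)^{d+1}$, not $m^d(\log m)^{2d+1}$. The paper obtains exactly $\exp\{-K_4 m^d(\log m)^{d+1}\}$.

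Your final rate is nevertheless correct, but your explanation of the $(2d+1)$ exponent is not: there are no ``$d$ copies of $|\xi_i|_d^{-a_{12}}$''---there is a single such factor per cell, and it contributes only a $\log m$ to $\log(1/F(V_i))$. The $(2d+1)$ arises purely from the algebra of solving $n\epsilon_n^2 \asymp m^d(\log m)^{d+1}$ together with $\epsilon_n^2 \asymp (\log m/m)^{2\beta}$: writing $m/\log m \asymp \epsilon_n^{-1/\beta}$ gives $m^d(\log m)^{d+1}\asymp \epsilon_n^{-d/\beta}(\log(1/\epsilon_n))^{2d+1}$, and matching this to $n\epsilon_n^2$ yields the stated power of $\log n$. (Incidentally, your cell radius $(\log m/m)^{\beta+d+1}$ is smaller than needed here---the paper takes $(\log m/m)^\beta$, which already suffices because $\sum_i|\alpha_i|\lesssim 1$ in the location-modulation case---but this does not affect the rate.)
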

\begin{proof}
  Let $f_m(x) = \sum_{i=1}^N\alpha_i\,K_{\xi_i,\phi_i}(x - \mu_i)$ be as in
  \cref{pro:2}. For any $i=1,\dots,N$ define the sets
  $U_i := \Set{\xi \in \mathbb R^d \given |\xi - \xi_i|_d \leq (\log m /
    m)^{\beta}}$, $V_i := \Set{\mu \in [-2S,2S]^d \given |\mu - \mu_i|_d \leq
    (\log m /m )^{\beta}}$ and $W_i := \Set{\phi \in [0,\pi/2] \given |\phi -
    \phi_i| \leq (\log m / m)^{\beta}}$. Notice that these sets are disjoint,
  and for any $i=1,\dots,N$ we have
  \begin{equation*}
    \alpha F(U_i\times V_i \times W_i)
    \gtrsim |\xi_i|_d^{-a_{12}}(\log m /m)^{\beta(a_1 + a_{10} + a_{13})}
    \gtrsim (\log m /m)^{q},
  \end{equation*}
  where $q:= da_{12} + \beta(a_1+a_{10}+a_{13})$. Then proceed as in
  \cref{lem:2}, to find constants $K_1,K_4 >0$ such that with
  $\epsilon_n = C_0^{-1}K_1 (\log m / m)^{\beta}$,
  \begin{equation*}
    \Pi(K_n(\theta_0,\epsilon_n))
    \geq \exp\left\{-K_4 m^d(\log m)^{d+1} \right\}.\qedhere
  \end{equation*}
\end{proof}


\appendix

\section{Symmetric Gamma distribution}
\label{sec:symm-gamma-distr}

The symmetric Gamma distribution $\mathrm{SGa}(a,b)$, with $a,b >0$ is the
distribution having Fourier transform $t \mapsto (1 + t^2/b^2)^{-a}$. It is
easily seen that if $X \sim \Ga(a,b)$ and $Y \sim \Ga(a,b)$, with $X$ and $Y$
independent, then $X - Y$ has $\mathrm{SGa}(a,b)$ distribution.

\begin{proposition}
  \label{pro:1}
  Let $Z\sim \mathrm{SGa}(a,b)$. Then for any positive integer $n$,
  \begin{align*}
    \Esp Z^{2n} = \frac{(2n)!}{n!}\frac{(a)^{(n)}}{b^{2n}},
    \qquad
    \Esp Z^{2n+1} = 0.
  \end{align*}
  Moreover, the distribution $\mathrm{SGa}(a,b)$ is determined by its moments
  (in the sense that $\mathrm{SGa}(a,b)$ is the only distribution with this
  sequence of moments).
\end{proposition}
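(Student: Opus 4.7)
The plan is to compute the moments by working with the moment generating function rather than the characteristic function, since it gives the needed expansion in one line.

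First, realize $Z$ as $Z = X - Y$ with $X, Y \iid \mathrm{Ga}(a,b)$, as already noted in the text. The MGF of a $\mathrm{Ga}(a,b)$ random variable is $(1 - t/b)^{-a}$ for $t < b$, and the MGF of $-Y$ is $(1 + t/b)^{-a}$ for $t > -b$. By independence, for $|t| < b$,
\begin{equation*}
M_Z(t) \;=\; \bigl(1 - t/b\bigr)^{-a}\bigl(1 + t/b\bigr)^{-a} \;=\; \bigl(1 - t^2/b^2\bigr)^{-a}.
\end{equation*}
Since $M_Z$ is finite on the open interval $(-b,b)$, $Z$ has moments of all orders and the series $\sum_{n\ge 0}\mathbb{E}Z^n\, t^n / n!$ converges to $M_Z(t)$ there.

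Next, expand the right-hand side by the generalized binomial theorem:
\begin{equation*}
\bigl(1 - t^2/b^2\bigr)^{-a} \;=\; \sum_{n=0}^{\infty} \binom{-a}{n}\bigl(-t^2/b^2\bigr)^n \;=\; \sum_{n=0}^{\infty} \frac{(a)^{(n)}}{n!}\,\frac{t^{2n}}{b^{2n}}.
\end{equation*}
Matching coefficients with $\sum_{n\ge 0}\mathbb{E}Z^n\, t^n/n!$, all odd coefficients vanish, giving $\mathbb{E}Z^{2n+1} = 0$, and the $(2n)$-th coefficient yields
\begin{equation*}
\frac{\mathbb{E}Z^{2n}}{(2n)!} \;=\; \frac{(a)^{(n)}}{n!\, b^{2n}},
\end{equation*}
which rearranges to the claimed formula.

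Finally, for the determinacy by moments, the fact that $M_Z(t)$ is finite on an open neighborhood of $0$ (namely $(-b,b)$) is a classical sufficient condition for the moment problem to have a unique solution (see e.g.\ Billingsley, Theorem~30.1). There is no real obstacle in this argument; the only thing to be a bit careful about is checking that the binomial series is valid (which it is on $|t/b|<1$, matching the domain where the MGF is finite), and that absolute convergence justifies identifying Taylor coefficients with moments.
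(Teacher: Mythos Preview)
Your proof is correct and takes a genuinely different route from the paper. The paper expands $(X-Y)^{2n}$ by the binomial theorem, uses the Gamma moments $\Esp X^k = (a)^{(k)}/b^k$, and then appeals to the combinatorial identity $\sum_{k=0}^{2n}\binom{2n}{k}(-1)^k (a)^{(2n-k)}(a)^{(k)} = \frac{(2n)!}{n!}(a)^{(n)}$, which it leaves as ``some algebra''. You instead compute the moment generating function in closed form and read the moments off from the binomial series expansion of $(1-t^2/b^2)^{-a}$. Your approach is cleaner in that it sidesteps the Vandermonde-type identity entirely and gives the odd and even moments in a single stroke; the paper's approach has the minor advantage of not invoking the analyticity of the MGF. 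For determinacy, the paper verifies Carleman's criterion directly from the moment formula, whereas you use the (stronger and more immediate) sufficient condition that the MGF is finite on a neighborhood of the origin; both are standard and your argument is the shorter one here.
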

\begin{proof}
  From definition of $\mathrm{SGa}(a,b)$, the random variable $Z$ is distributed
  as $X - Y$, where $X,Y \sim \mathrm{Ga}(a,b)$ and $X,Y$ are independent. Then
  it is obvious that all odd moments must vanish. For the even moments, we
  write,
  \begin{align*}
    \Esp(X-Y)^{2n}
    &= \sum_{k=0}^{2n}\binom{2n}{k}(-1)^k \Esp X^{2n-k}\Esp Y^k\\
    &= \frac{1}{b^{2n}}\sum_{k=0}^{2n}\binom{2n}{k}(-1)^k
      (a)^{(2n-k)}(a)^{(k)}
    = \frac{(2n)!}{n!}\frac{(a)^{(n)}}{b^{2n}},
  \end{align*}
  where the last equality can be obtained after some algebra. To see that
  $\mathrm{SGa}(a,b)$ is determined by its moments, we check that Carleman's
  criteria applies \citep{Gut2006}, which is straightforward.
\end{proof}

\begin{proposition}
  \label{pro:5}
  Let $X \sim \mathrm{SGa}(\alpha,\eta)$, with $0 < \alpha \leq 1$ and
  $\eta > 0$. Then there is a constant $C>0$ such that for any $x \in \mathbb R$
  and any $0< \delta \leq (3+\eta)^{-1}$ we have
  $\Pr(|X - x| \leq \delta) \geq C\delta \e^{-(3+\eta)|x|}\Gamma(\alpha)^{-1}$.
\end{proposition}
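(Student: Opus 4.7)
The plan is to bound the density $f_X$ of $X$ directly from below by a quantity of the form $C_\eta e^{-(\eta+1)|u|}/\Gamma(\alpha)$, and then integrate this lower bound over the interval $[x-\delta,x+\delta]$. Since $X \sim \mathrm{SGa}(\alpha,\eta)$ means $X = Y - Z$ with $Y,Z$ i.i.d. $\mathrm{Ga}(\alpha,\eta)$, a direct convolution gives, for $u \ge 0$,
\begin{equation*}
  f_X(u) = \frac{\eta^{2\alpha}}{\Gamma(\alpha)^2}\, e^{-\eta u}\int_0^{\infty}(u+z)^{\alpha-1} z^{\alpha-1} e^{-2\eta z}\, dz,
\end{equation*}
and $f_X$ is symmetric. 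The naive route of writing $\Pr(|X-x|\le\delta) \ge \Pr(Y\in[x,x+\delta])\Pr(Z\in[0,\delta])$ is too weak: it produces a factor of $\delta^2/\Gamma(\alpha)^2$, whereas the statement asks for $\delta/\Gamma(\alpha)$. So I will work at the level of the density.

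To lower bound the integral I restrict to $z\in[0,1]$. For $u\in[0,1]$, $(u+z)\le 2$ and thus $(u+z)^{\alpha-1}\ge 2^{\alpha-1}$ since $\alpha\le 1$, which gives $\int \ge 2^{\alpha-1}e^{-2\eta}/\alpha$. For $u\ge 1$, $(u+z)^{\alpha-1}\ge (u+1)^{\alpha-1}\ge (u+1)^{-1}$, giving $\int \ge e^{-2\eta}/[(u+1)\alpha]$. In both regimes a factor $1/\alpha$ appears, and this is where the key trick enters: since $0<\alpha\le 1$, we have $\Gamma(\alpha+1)\le 1$ (the Gamma function is bounded by $1$ on $[1,2]$), hence
\begin{equation*}
  \frac{1}{\alpha\,\Gamma(\alpha)^2} \;=\; \frac{1}{\Gamma(\alpha+1)\,\Gamma(\alpha)} \;\ge\; \frac{1}{\Gamma(\alpha)}.
\end{equation*}
This absorbs one power of $\Gamma(\alpha)$. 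Combining with $1/(u+1)\ge e^{-u}$ for the second regime, I obtain a bound of the form $f_X(u)\ge C_\eta\, e^{-(\eta+1)|u|}/\Gamma(\alpha)$, valid for all $u\in\reals$, with $C_\eta>0$ depending only on $\eta$ (uniformly over $\alpha\in(0,1]$, since $\eta^{2\alpha}$ and $2^{\alpha-1}$ are both bounded away from $0$ on this range).

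Integrating then yields
\begin{equation*}
  \Pr(|X-x|\le\delta) \;\ge\; 2\delta\!\inf_{u\in[x-\delta,x+\delta]}\!f_X(u) \;\ge\; 2\delta C_\eta\frac{e^{-(\eta+1)(|x|+\delta)}}{\Gamma(\alpha)}.
\end{equation*}
The hypothesis $\delta\le (3+\eta)^{-1}$ ensures $(\eta+1)\delta\le (\eta+1)/(\eta+3)\le 1$, so $e^{-(\eta+1)\delta}\ge e^{-1}$, and $e^{-(\eta+1)|x|}\ge e^{-(\eta+3)|x|}$ since $|x|\ge 0$. This delivers the announced bound with $C=2e^{-1}C_\eta$. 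The main obstacle is precisely what I flagged above — avoiding the spurious $1/\Gamma(\alpha)^2$; everything else reduces to routine estimates on the convolution integrand, elementary inequalities like $(u+1)^{-1}\ge e^{-u}$, and the identity $\Gamma(\alpha+1)=\alpha\Gamma(\alpha)$.
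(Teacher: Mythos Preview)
Your proof is correct. Both you and the paper start from $X=Y-Z$ with $Y,Z$ i.i.d.\ $\mathrm{Ga}(\alpha,\eta)$, but the executions diverge at the point where one power of $\Gamma(\alpha)$ has to be cancelled.

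The paper works directly at the probability level: it lower bounds $\Pr(X\in[x,x+\delta])$ by the double integral, uses monotonicity of $z\mapsto z^{\alpha-1}e^{-\eta z}$ to bound the inner integral by $\delta\,e^{-(3+\eta)(x+y+\delta)}$, and then evaluates the remaining \emph{full} integral $\int_0^\infty y^{\alpha-1}e^{-(3+2\eta)y}\,dy=\Gamma(\alpha)/(3+2\eta)^\alpha$, which supplies the needed $\Gamma(\alpha)$ in the numerator. You instead bound the density $f_X$ pointwise, restrict the convolution integral to $z\in[0,1]$, and extract a factor $1/\alpha$; your cancellation then comes from the identity $\alpha\Gamma(\alpha)=\Gamma(\alpha+1)\le 1$ on $(0,1]$. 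The paper's route is a touch shorter (no case split in $u$), while yours yields a slightly sharper intermediate density bound with exponent $\eta+1$ rather than $\eta+3$, which you then relax to match the statement.
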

\begin{proof}
  Assume for instance that $x \geq 0$. Recalling that $X$ is distributed as the
  difference of two independent $\mathrm{Ga}(\alpha,\eta)$ distributed random
  variables, it follows
  \begin{align*}
    \Pr(|X - x| \leq \delta)
    &\geq
      \frac{1}{\Gamma(\alpha)} \int_0^{\infty} y^{\alpha -
      1}\e^{-\eta y}
      \frac{1}{\Gamma(\alpha)}
      \int_{x+y}^{x+y + \delta} z^{\alpha-1}\e^{-\eta z}\, dz\, dy.
  \end{align*}
  Because $\alpha \leq 1$, the mapping $z \mapsto z^{\alpha-1}e^{-\eta z}$ is
  monotonically decreasing on $\mathbb R^+$, then the last integral in the rhs
  of the previous equation is lower bounded by
  $\delta (x+y+\delta)^{\alpha -1}\e^{-\eta(x+y+\delta)} \geq \delta
  \e^{-(3+\eta)(x+y+\delta)}$. Then
  \begin{align*}
    \Pr(|X - x| \leq \delta)
    &\geq
      \frac{\delta \e^{-(3+\eta)(x+\delta)}}{\Gamma(\alpha)^2} \int_0^{\infty}
      y^{\alpha - 1}\e^{-(3+2\eta)y}\, dy\\
    & =\frac{\delta \e^{-(3+\eta)(x+\delta)}}{(3+2\eta)^{\alpha}\Gamma(\alpha)}
      \geq \frac{\delta \e^{-(3+\eta)|x|}}{\e(3+2\eta)^{\alpha}\Gamma(\alpha)}.
  \end{align*}
  The proof when $x < 0$ is obvious.
\end{proof}

\section{Auxiliary results}
\label{sec:auxiliary-results}

\begin{proposition}
  \label{pro:11}
  Let $K_A(x) = g(A^{-1}x)$, and assume that for all multi-index
  $k\in \integers^d$ with $|k| = 0,1,2$ the mapping $x\mapsto x^k g(x)$ belongs
  to $L^1(\reals^d)$. Let $\|\cdot\|$ be the spectral norm on
  $\mathcal{E}$. Then there is a constant $C>0$ such that for all
  $x,\mu_1,\mu_2 \in \reals^d$ and all $A_1,A_2 \in \mathcal{E}$ arbitrary with
  $\|I - A_1^{-1}A_2\| \wedge \|I - A_2^{-1}A_1\|$ small enough,
  \begin{multline*}
    |K_{A_1}(x - \mu_1) - K_{A_2}(x - \mu_2)|
    \leq
    C\|I - A_1^{-1}A_2\| \wedge C\|I - A_2^{-1}A_1\|\\
    + C\left(\|A_1^{-1}\| \wedge \|A_2^{-1}\| \right)|\mu_1 -
    \mu_2|_d.
  \end{multline*}
\end{proposition}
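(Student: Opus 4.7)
The plan is to split the difference $K_{A_1}(x-\mu_1) - K_{A_2}(x-\mu_2)$ by inserting an intermediate term that varies only one of $(A,\mu)$ at a time, reducing the problem to a ``translation-only'' part and a ``scale-only'' part. Two telescopings are available: either through $K_{A_1}(x-\mu_2)$ or through $K_{A_2}(x-\mu_1)$. In the first case the translation step is taken with $A_1^{-1}$, in the second with $A_2^{-1}$; the freedom to choose produces the $\|A_1^{-1}\|\wedge\|A_2^{-1}\|$ factor in the statement.

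The translation-only piece is easy. For a fixed $A$, write $g(A^{-1}(x-\mu_1)) - g(A^{-1}(x-\mu_2))$ and apply the integral mean value theorem using $|A^{-1}(\mu_2-\mu_1)|_d \leq \|A^{-1}\|\,|\mu_1-\mu_2|_d$. Under the regularity assumed on $g$ (the hypotheses of the proposition together with the Schwartz setting used throughout \cref{sec:suppl-assumpt} guarantee $\|\nabla g\|_\infty < \infty$ and $M:=\sup_{y\in\mathbb R^d}|y||\nabla g(y)|<\infty$), this step contributes at most $\|\nabla g\|_\infty \|A^{-1}\|\,|\mu_1-\mu_2|_d$.

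The scale-only piece is the main obstacle. For a fixed $\mu$, set $y:=A_2^{-1}(x-\mu)$ and $B:=A_1^{-1}A_2 - I$, so that $\|B\| = \|I-A_1^{-1}A_2\|$ (assumed small, say $\leq 1/2$), and $A_1^{-1}(x-\mu)=(I+B)y$. Along the segment $y_t:=(I+tB)y$, $t\in[0,1]$, one has $(1-t\|B\|)|y|\leq |y_t|\leq (1+t\|B\|)|y|$, hence $|y|\leq 2|y_t|$. The integral mean value theorem gives
\begin{equation*}
  |g((I+B)y)-g(y)| \;\leq\; \int_0^1 |\nabla g(y_t)\cdot By|\,dt \;\leq\; \|B\|\,|y|\sup_{t\in[0,1]}|\nabla g(y_t)|.
\end{equation*}
The pinching $|y|\leq 2|y_t|$ is what allows the trade $|y|\,|\nabla g(y_t)|\leq 2|y_t|\,|\nabla g(y_t)|\leq 2M$, yielding the bound $2M\|I - A_1^{-1}A_2\|$ for the scale piece. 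Parametrising instead from the side of $A_1^{-1}(x-\mu)$, i.e. writing $A_2^{-1}A_1 - I = B'$ and walking $(I+tB')$ for $t\in[0,1]$, produces the symmetric bound $2M\|I - A_2^{-1}A_1\|$; taking the smaller of the two gives the $\wedge$ appearing in the first term of the proposition.

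Combining the two pieces, for either choice of telescoping,
\begin{equation*}
  |K_{A_1}(x-\mu_1)-K_{A_2}(x-\mu_2)| \leq 2M\bigl(\|I-A_1^{-1}A_2\|\wedge\|I-A_2^{-1}A_1\|\bigr) + \|\nabla g\|_\infty\,\|A_i^{-1}\|\,|\mu_1-\mu_2|_d,
\end{equation*}
with $i\in\{1,2\}$ chosen freely. Taking $i$ to minimise $\|A_i^{-1}\|$ proves the claim with $C:=2M\vee\|\nabla g\|_\infty$. The only nontrivial step is the pinching argument in the scale piece: without the smallness assumption on $\|I-A_1^{-1}A_2\|\wedge\|I-A_2^{-1}A_1\|$ the intermediate points $y_t$ could drift arbitrarily far from $y$, and the $|y|$ factor arising in the mean value estimate could no longer be absorbed by the decay of $|\nabla g|$.
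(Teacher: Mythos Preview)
Your argument is correct and takes a genuinely different route from the paper for the scale-only piece. Both you and the paper use the same telescoping and both handle the translation part via Lipschitz continuity of $g$. For the scale part, however, the paper passes to the Fourier side: it writes
\[
\sup_x |g(x)-g(A_1^{-1}A_2 x)| \leq \int_{\reals^d}\bigl|\widehat g(\xi)-|\det(A_1^{-1}A_2)|\,\widehat g(A_1^{-1}A_2\xi)\bigr|\,d\xi,
\]
and then invokes dominated convergence along a sequence $B_n^{-1}C_n\to I$ to conclude that the supremum tends to zero. As written, this only yields $o(1)$ as $\|I-A_1^{-1}A_2\|\to 0$, not the linear bound $C\|I-A_1^{-1}A_2\|$ asserted in the statement. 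Your direct mean-value argument along $t\mapsto (I+tB)y$, together with the pinching $|y|\leq 2|y_t|$ to absorb the factor $|y|$ into $\sup_z |z|\,|\nabla g(z)|$, gives exactly the linear rate, and your freedom to parametrise the segment from either endpoint cleanly produces the minimum $\|I-A_1^{-1}A_2\|\wedge\|I-A_2^{-1}A_1\|$.

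One remark: your proof uses $\|\nabla g\|_\infty<\infty$ and $\sup_y|y|\,|\nabla g(y)|<\infty$, which do not follow from the bare $L^1$ moment hypotheses of the proposition. You rightly note that these are available in the Schwartz setting of \cref{sec:suppl-assumpt}; the paper's proof also leans on unstated regularity (Lipschitz continuity of $g$, and a Fourier decay claim that likewise requires more than the $L^1$ moment conditions), so neither proof is self-contained under the proposition's literal hypotheses.
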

\begin{proof}
  Starting from the triangle inequality, we have
  \begin{multline}
    \label{eq:28}
    |K_{A_1}(x - \mu_1) - K_{A_2}(x - \mu_2)| \leq
    |K_{A_1}(x - \mu_2) - K_{A_2}(x - \mu_2)|\\
    + |K_{A_1}(x - \mu_1) - K_{A_1}(x - \mu_2)|
  \end{multline}
  We recall that $K_A(x) := g(A^{-1}x)$. To bound the first term, it is enough
  to bound $g(x) - g(A_1^{-1}A_2x)$ for all $x\in\reals^d$. Let
  $(B_n)_{n\in\integers}$ and $(C_n)_{n\in\integers}$ be two arbitrary sequences
  in $\mathcal{E}$ such that $\|I - B_n^{-1}C_n\| \leq 1/n$, and let
  $\widehat{g}$ denote the Fourier transform of $g$. Then,
  \begin{align*}
    \sup_{x\in\reals^d}\left|g(x) - g(B_n^{-1}C_nx)\right|
    & \leq \int_{\reals^d}\left| \widehat{g}(\xi) - |\det
      (B_n^{-1}C_n)|\, \widehat{g}(B_n^{-1}C_n\xi)\right|\,
      d\xi.
  \end{align*}
  Remark that $|\det B_n^{-1}C_n| \leq 1 + |\det(I - B_n^{-1}C_n)|$, and
  $\|I - B_n^{-1}C_n\| \leq 1/n$ implies that
  $|\det(I - B_n^{-1}C_n)| \leq \sqrt d/n^d$. Also,
  $|B_n^{-1}C_n\xi|_d \leq \|I - B_n^{-1}C_n\||\xi|_d + |\xi|_d \leq (1 +
  1/n)|\xi|_d$. It turns out that,
  \begin{align*}
    \lim_{n\rightarrow\infty}|\det B_n^{-1}C_n|\, \widehat{g}(B_n^{-1}C_n\xi) =
    \widehat{g}(\xi).
  \end{align*}
  We now prove that
  $\Set{|\det B_n^{-1}C_n|\, \widehat{g}(B_n^{-1}C_n\xi) \given n \geq 2}$ is
  dominated. By assumption, $g\in L^1(\reals^d)$, as well as
  $x\mapsto x^{k} g(x)$ with $|k|=1,2$. This implies that
  $|\widehat{g}(\xi)| \leq C(1 + |\xi|_d)^{-2}$ for some $C> 0$. We already saw
  that $|\det B_n^{-1}C_n| \leq 1 + 1/n^d$, and
  $|\xi|_d \leq |B_n^{-1}C_n\xi|_d + |(I - B_n^{-1}C_n)\xi|_d$ implies
  $|B_n^{-1}C_n\xi|_d \geq (1 - 1/n)|\xi|_d$. Therefore, for any $n \geq 2$,
  \begin{align*}
    |\det B_n^{-1}C_n|\, \widehat{g}(B_n^{-1}C_n\xi)
    &\leq \frac{C |\det B_n^{-1}C_n|}{(1 + |B_n^{-1}C_n\xi|_d)^2}
      \leq \frac{C(1 + 2^{-d})}{(1 + |\xi|_d/2)^2}.
  \end{align*}
  Then the dominated convergence applies, and
  \begin{align*}
    \lim_{n\rightarrow \infty}\sup_{x\in\reals^d}|g(x) - g(B_n^{-1}C_nx)| = 0.
  \end{align*}
  The second term of the rhs of \cref{eq:28} is bounded above by
  $|A_1^{-1}(\mu_1 - \mu_2)|_d \leq \|A_1^{-1}\|\,|\mu_1 - \mu_2|_d$, using
  Lipshitz continuity of $g$. Using a symmetry argument, the conclusion of the
  proposition follows.
\end{proof}

\begin{proposition}
  \label{pro:12}
  Let $K_{\xi,\phi}(x) = g(x)\cos(\sum_{i=1}^d \xi_ix_i + \phi)$, and assume
  that for all multi-index $k\in \integers^d$ with $|k| \leq 1$ we have
  $\sup_{x\in\reals^d}|x^kg(x)| < +\infty$ and
  $\sup_{x\in\reals^d}|D^kg(x)|$. Then there is a constant $C>0$ such that for
  all $x,\mu_1,\mu_2,\xi_1,\xi_2 \in \reals^d$ and all
  $\phi_1,\phi_2 \in [0,\pi/2]$
  \begin{align*}
    |K_{\xi_1}(x - \mu_1) - K_{\xi_2}(x - \mu_2)|
    \leq C|\xi_1 - \xi_2|_d + C|\mu_1 - \mu_2|_d + C|\phi_1 - \phi_2|.
  \end{align*}
\end{proposition}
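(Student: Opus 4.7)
The plan is to apply the triangle inequality to telescope the variations of the three parameters $\xi$, $\phi$, $\mu$ one at a time, and to handle each resulting piece by an elementary Lipschitz bound, in the same spirit as \cref{pro:11}. Concretely, I would write
\begin{align*}
  K_{\xi_1,\phi_1}(x-\mu_1) - K_{\xi_2,\phi_2}(x - \mu_2)
  &= \bigl[K_{\xi_1,\phi_1}(x - \mu_1) - K_{\xi_2,\phi_1}(x-\mu_1)\bigr]\\
  &\quad+ \bigl[K_{\xi_2,\phi_1}(x-\mu_1) - K_{\xi_2,\phi_2}(x - \mu_1)\bigr]\\
  &\quad+ \bigl[K_{\xi_2,\phi_2}(x - \mu_1) - K_{\xi_2,\phi_2}(x - \mu_2)\bigr]
\end{align*}
and bound each bracket separately.

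Setting $y := x - \mu_1$, the first bracket equals $g(y)\bigl[\cos(\xi_1\cdot y + \phi_1) - \cos(\xi_2\cdot y + \phi_1)\bigr]$, and since cosine is $1$-Lipschitz it is bounded by $|g(y)| |(\xi_1-\xi_2)\cdot y| \leq |\xi_1 - \xi_2|_d \cdot |y|_d |g(y)|$. The hypothesis $\sup_y |y^k g(y)| < +\infty$ for $|k|\leq 1$ ensures that $|y|_d |g(y)|$ is uniformly bounded, yielding $\leq C|\xi_1 - \xi_2|_d$. The second bracket equals $g(y)\bigl[\cos(\xi_2 \cdot y + \phi_1) - \cos(\xi_2\cdot y + \phi_2)\bigr]$ and is directly $\leq \|g\|_\infty |\phi_1 - \phi_2|$, again using the $1$-Lipschitz property of cosine together with $\|g\|_\infty < +\infty$.

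For the third bracket, which involves the argument of both $g$ and the cosine, I would further split as
\[
  g(x-\mu_1)\bigl[\cos(\xi_2\cdot(x-\mu_1)+\phi_2)-\cos(\xi_2\cdot(x-\mu_2)+\phi_2)\bigr]
  + \bigl[g(x-\mu_1)-g(x-\mu_2)\bigr]\cos(\xi_2\cdot(x-\mu_2)+\phi_2).
\]
The second summand is bounded by $\|\nabla g\|_\infty |\mu_1 - \mu_2|_d$ using the hypothesis $\sup_y |D^k g(y)| < +\infty$ for $|k|\leq 1$ and the mean value theorem. The first summand, again by $1$-Lipschitz continuity of cosine, is bounded by $\|g\|_\infty |\xi_2|_d |\mu_1 - \mu_2|_d$. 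Combining the three bounds yields the proposition.

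The main obstacle to a fully uniform statement is the factor $|\xi_2|_d$ appearing in the $\mu$-variation subterm: it forces the constant $C$ to depend on the range of $\xi$ that is actually probed. This is harmless for our intended uses, since in both \cref{pro:2} and \cref{pro:8} the frequencies considered lie in an a priori bounded set fixed by the sieve construction, so that $\|g\|_\infty |\xi|_d$ can be absorbed into the constant. No other step requires care: the cosine identities and the bounds $\sup_y |y^k g(y)|, \sup_y |D^k g(y)| < +\infty$ for $|k|\leq 1$ are applied in an entirely routine manner, making this proof markedly simpler than that of \cref{pro:11}.
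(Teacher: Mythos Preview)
Your approach is essentially the same as the paper's: a three-term telescoping followed by elementary Lipschitz bounds on $g$ and $\cos$. The paper telescopes in the order $\mu$, $\phi$, $\xi$ rather than your $\xi$, $\phi$, $\mu$, but this is immaterial.

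You are in fact more careful than the paper on one point. When varying $\mu$, the paper simply writes ``Because $g$ has bounded first derivatives, it is Lipschitz continuous for some Lipschitz constant $K>0$, then the first term of the rhs is bounded above by $K|\mu_1-\mu_2|_d$'', which silently ignores that the cosine factor also depends on $\mu$ and contributes a term of order $\|g\|_\infty|\xi|_d|\mu_1-\mu_2|_d$. You identify this explicitly and correctly observe that the resulting constant depends on the range of $\xi$, which is indeed bounded in all the applications (\cref{pro:2,pro:8}). So the statement as written, with $C$ uniform over all $\xi_1,\xi_2\in\reals^d$, is not quite what either argument proves; your remark that the proposition should be read with $\xi$ restricted to a bounded set is the right fix, and the paper's proof tacitly relies on the same restriction.
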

\begin{proof}
  We write,
  \begin{multline*}
    |K_{\xi_1,\phi_1}(x - \mu_1) - K_{\xi_2,\phi_2}(x - \mu_2)|
    \leq |K_{\xi_1,\phi_1}(x - \mu_1) - K_{\xi_1,\phi_1}(x - \mu_2)|\\
    + |K_{\xi_1,\phi_1}(x - \mu_2) - K_{\xi_1,\phi_2}(x - \mu_2)|
    + |K_{\xi_1,\phi_2}(x - \mu_2) - K_{\xi_2,\phi_2}(x-\mu_2)|.
  \end{multline*}
  Because $g$ has bounded first derivatives, it is Lipschitz continuous for some
  Lipschitz contant $K > 0$, then the first term of the rhs is bounded above by
  $K |\mu_1 - \mu_2|_d$. With the same argument, the second term is bounded by a
  constant multiple of $\|g\|_{\infty}|\phi_1 - \phi_2|$. The last term of the
  rhs is easily bounded, because for all $x\in\reals^d$:
  \begin{align*}
    |K_{\xi_1,\phi_2}(x) - K_{\xi_2,\phi_2}(x)|
    &\leq |\cos(\textstyle\sum_{i=1}^d\xi_{1,i} x_i + \phi_2) - \cos(\textstyle
      \sum_{i=1}^d\xi_{2,i} x_i + \phi_2)||g(x)|\\
    &\leq \sum_{i=1}^d|\xi_{1,i}x_i - \xi_{2,i}x_i||g(x)|\\
    &\leq \left(\sum_{i=1}^d |\xi_{1i} - \xi_{2i}|^2\right)^{1/2}
      \left(\sum_{i=1}^d |x_i g(x)|^2\right)^{1/2},
  \end{align*}
  where the last line holds by Hölder's inequality. Then the conclusion follows
  $x\mapsto x^kg(x)$ is bounded for all $|k| = 1$.
\end{proof}

\begin{proposition}
  \label{pro:7}
  Let $g(x) = \exp(-|x|_d^2/2)$. Then
  $\sup_{x\in \mathbb R^d}|D^{\alpha}g(x)| \lesssim \exp(\frac 12|\alpha| \log
  |\alpha|)$ for all $\alpha \in \mathbb N^d$.
\end{proposition}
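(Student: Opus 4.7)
The plan is to exploit the product structure of the standard Gaussian to reduce the multivariate estimate to a classical one-dimensional bound on Hermite polynomials. Since $g(x) = \prod_{i=1}^{d} e^{-x_i^2/2}$, any mixed partial derivative factorises coordinatewise:
\begin{equation*}
D^{\alpha}g(x) \;=\; \prod_{i=1}^{d} \frac{d^{\alpha_i}}{dx_i^{\alpha_i}} e^{-x_i^2/2} \;=\; \prod_{i=1}^{d}(-1)^{\alpha_i}\, H_{\alpha_i}(x_i)\, e^{-x_i^2/2},
\end{equation*}
where $H_n$ is the $n$-th probabilist's Hermite polynomial, defined by the Rodrigues formula $H_n(y) = (-1)^n e^{y^2/2}(d/dy)^n e^{-y^2/2}$. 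It therefore suffices to control each factor uniformly in $x_i \in \mathbb R$.

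Next I would invoke Cram\'er's inequality for Hermite polynomials, which in the probabilist's normalisation states that $|H_n(y)| \le K\sqrt{n!}\,e^{y^2/4}$ for some absolute constant $K>0$ and all $n\in \mathbb N$, $y\in \mathbb R$. Multiplying by $e^{-y^2/2}$ and taking the supremum over $y$ yields
\begin{equation*}
\sup_{y\in \mathbb R}\, |H_n(y)|\,e^{-y^2/2} \;\le\; K\sqrt{n!}\,\sup_{y\in \mathbb R} e^{-y^2/4} \;=\; K\sqrt{n!}.
\end{equation*}
Taking the product over the $d$ coordinates gives the intermediate estimate
\begin{equation*}
\sup_{x\in \mathbb R^d}|D^{\alpha}g(x)| \;\le\; K^{d}\sqrt{\alpha!},\qquad \alpha! := \alpha_1!\cdots \alpha_d!.
\end{equation*}

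Finally I would finish with an elementary combinatorial/Stirling step. Because the multinomial coefficient $\binom{|\alpha|}{\alpha}$ is a positive integer, one has $\alpha! \le |\alpha|!$, and for $|\alpha| \ge 1$ the crude bound $|\alpha|! \le |\alpha|^{|\alpha|}$ is immediate; hence $\sqrt{\alpha!} \le \exp\bigl(\tfrac{1}{2}|\alpha|\log |\alpha|\bigr)$, with the case $|\alpha| = 0$ trivial since both sides equal $1$. Absorbing the constant $K^{d}$ into the symbol $\lesssim$ delivers the advertised bound. No step presents a real obstacle: the coordinatewise factorisation and Stirling estimate are routine, and the single substantive input, Cram\'er's inequality for Hermite polynomials, is a well-known classical result.
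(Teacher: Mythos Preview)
Your argument is correct. Both the factorisation via probabilist's Hermite polynomials and the appeal to Cram\'er's inequality are sound, and the combinatorial finishing step $\alpha! \le |\alpha|! \le |\alpha|^{|\alpha|}$ is exactly what is needed.

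The paper, however, takes a different route: rather than invoking the Hermite representation, it bounds $|D^{\alpha}g(x)|$ by the $L^1$ norm of $\xi \mapsto \xi^{\alpha}\widehat g(\xi)$ via Fourier inversion, exploits that the Gaussian is (up to constants) its own Fourier transform, and then evaluates the resulting moment integral as $\prod_i \Gamma((\alpha_i+1)/2)$ before applying Stirling. Your approach is arguably more direct for the Gaussian specifically, since it uses the exact derivative structure rather than passing through an integral bound; the paper's Fourier argument, on the other hand, is a template that could in principle be adapted to other kernels whose transform has controlled moments. Both land on essentially the same intermediate quantity (of order $\sqrt{\alpha!}$) before the final Stirling-type step.
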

\begin{proof}
  For any $\alpha \in \mathbb N^d$, let $k = |\alpha| =
  \sum_{i=1}^d\alpha_i$. When $k < 2$, the result is obvious. Now assume that
  $k\geq 2$. By Fourier duality, we have for all $x\in \mathbb R^d$
  \begin{align*}
    |D^{\alpha}g(x)|
    &\leq \int |u^{\alpha}g(u)|\, du
      \lesssim 2^{k/2}\prod_{i=1}^d\Gamma\left(\frac{\alpha_i + 1}{2}\right)
      \lesssim
      2^{k/2}\prod_{i=1}^d\left(\frac{2}{\alpha_i+1}\right)\left(\frac{\alpha_i
      + 1}{2e}\right)^{\frac{\alpha_i+1}{2}},
  \end{align*}
  where the last inequality follows from Stirling formula. Then it is clear
  that,
  \begin{align*}
    |D^{\alpha}g(0)|
    &\lesssim
      \exp\left\{-\frac{k}{2} - \frac 12\sum_{i=1}^d\log(1 + \alpha_i)
      + \frac 12 \sum_{i=1}^d\alpha_i\log(1 + \alpha_i) \right\}.
  \end{align*}
  The result follows because for all $k \geq 2$ we have
  $\sum_{i=1}^d\alpha_i \log(1+\alpha_i) \leq \sum_{i=1}^d\alpha_i \log(1 + k)
  \leq (1/2 + \log k) \sum_{i=1}^d\alpha_i \leq k/2 + k \log k$.
\end{proof}

\section*{Acknowledgments}

The authors are grateful to Judith Rousseau and Trong Tuong Truong for their
helpful support and valuable advice throughout the writing of this article, and
also to Pr. Robert L. Wolpert for discussions and RJMCMC source code.

\bibliographystyle{abbrvnat}
\bibliography{bib/biblio-paper}

\end{document}